\documentclass[paper=a4, fontsize=12pt, abstract=on]{article}

\usepackage{amsmath}
\usepackage{amsthm}
\usepackage{amsfonts}
\usepackage{amssymb}
\usepackage{mathtools}
\usepackage{hyperref}
\usepackage{color}
\usepackage{xcolor}
\usepackage{dsfont}
\usepackage{mathrsfs}


\numberwithin{equation}{section}

\usepackage{titling}

\usepackage[headsepline]{scrlayer-scrpage}
\usepackage{geometry}

\usepackage[utf8]{inputenc}

\usepackage{lmodern}
\usepackage[T1]{fontenc}

\geometry{includehead, includefoot, left=3cm,right=3cm, top=2cm, bottom=2cm} 

 
\title{\bf Malliavin-Stein Method: \\ a Survey of Some Recent Developments   }
\author{Ehsan Azmoodeh\thanks{Department of Mathematical Sciences, University of Liverpool, E-mail: ehsan.azmoodeh@liverpool.ac.uk},
 	Giovanni Peccati\thanks{DMATH, Universit\'e du Luxembourg. E-mail: giovanni.peccati@gmail.com}
and Xiaochuan Yang\thanks{Department of Mathematical Sciences, University of Bath, E-mail: xiaochuan.j.yang@gmail.com} 	}
\date{\today}
\theoremstyle{plain}
\newtheorem{Thm}{Theorem}[section]
\newtheorem{Lem}[Thm]{Lemma}
\newtheorem{lem}[Thm]{Lemma}
\newtheorem{Prop}[Thm]{Proposition}
\newtheorem{prop}[Thm]{Proposition}

\newtheorem{Cor}[Thm]{Corollary}
\newtheorem{Def}[Thm]{Definition}
\newtheorem{Rem}[Thm]{Remark}

\newtheorem{Ex}[Thm]{Example}

\newtheorem{con}[Thm]{Conjecture}

\newcommand{\dom}{{\rm dom}\, }

\newcommand{\mfk}{\mathfrak{f}}

\providecommand{\Enorm}[1]{\lVert #1\rVert_2}

\newcommand{\diff}[1]{\operatorname{d}\ifthenelse{\equal{#1}{}}{\,}{\!#1}}
\newcommand{\me}{\ensuremath{\mathrm{e}}}

\newcommand{\HH}{\mathfrak{H}}

\def\E{\mathbb{E}}
\def \P{\mathbb{P}}
\def\R{\mathbb{R}}

\def\N{\mathbb{N}}

\newcommand{\K}{\mathrm{\textbf{Ker}}}

\newcommand{\Id}{\mathrm{\textbf{Id}}}
\newcommand{\LL}{{\bf L}}
\newcommand{\ud}{\mathrm{d}}

\newcommand{\xy}[1]{{\color{purple}{{#1}}}}

\DeclareMathOperator{\W}{W}
\DeclareMathOperator{\III}{I}
\DeclareMathOperator{\SSS}{S}
\DeclareMathOperator{\HHH}{H}
\DeclareMathOperator{\HS}{HS}
\DeclareMathOperator{\Hess}{Hess}
\DeclareMathOperator{\Ent}{Ent}
\DeclareMathOperator{\HSI}{HSI}
\DeclareMathOperator{\WSH}{WSH}

\DeclareMathOperator{\supp}{supp}

\DeclareMathOperator{\Var}{Var}

\DeclareMathOperator{\Cov}{Cov}

\DeclarePairedDelimiter\abs{\lvert}{\rvert}


\let\temp\epsilon \let\epsilon\varepsilon \let\varepsilon\temp
\let\temp\phi \let\phi\varphi \let\varphi\temp

\pagestyle{plain} 

\begin{document}
	\maketitle
	\begin{abstract} Initiated around the year 2007, the Malliavin-Stein approach to probabilistic approximations combines Stein's method with infinite-dimensional integration by parts formulae based on the use of Malliavin-type operators. In the last decade, Malliavin-Stein techniques have allowed researchers to establish new quantitative limit theorems in a variety of domains of theoretical and applied stochastic analysis. The aim of this survey is to illustrate some of the latest developments of the Malliavin-Stein method, with specific emphasis on extensions and generalisations in the framework of Markov semigroups and of random point measures.
	  
	\end{abstract}

  \vskip0.3cm
\noindent \textbf{Keywords}: Limit Theorems; Stein's method; Malliavin Calculus; Wiener Space; Poisson Space; Multiple Integral; Markov Triple; Markov Generator; Eigenspace; Eigenfunction; Spectrum; Functional $\Gamma$- Calculus; Weak Convergence; Fourth Moment Theorems; Berry--Essen Bounds; Probability Metrics.

\noindent \textbf{MSC 2010}:  60F05; 60B10; 28C20; 60H07; 47D07; 34L10; 47A10.

	\tableofcontents
\section{Introduction and overview}\label{s:intro}

The {\bf Malliavin-Stein method} for probabilistic approximations was initiated in the paper \cite{StMethOnWienChaos}, with the aim of providing a quantitative counterpart to the (one- and multi-dimensional) central limit theorems for random variables living in the Wiener chaos of a general separable Gaussian field. As formally discussed in the sections to follow, the basic idea of the approach initiated in \cite{StMethOnWienChaos} is that, in order to assess the discrepancy between some target law (Normal or Gamma, for instance), and the distribution of a non-linear functional of a Gaussian field, one can fruitfully apply infinite-dimensional integration by parts formulae from the {\bf Malliavin calculus of variations} \cite{Mbook, n-p-book, GelbesBuch, Nbook} to the general bounds associated with the so-called {\bf Stein's method} for probabilistic approximations \cite{n-p-book, Chen-book}. In particular, the Malliavin-Stein approach captures and amplifies the essence of \cite{cha}, where Stein's method was combined with finite-dimensional integration by parts formulae for Gaussian vectors, in order to deduce {\bf second order Poincar\'e inequalities} -- as applied to random matrix models with Gaussian-subordinated entries (see also \cite{NPR09, v}).

\medskip

We recall that, as initiated by P. Malliavin in the path-breaking reference \cite{Mpaper}, the Malliavin calculus is an infinite-dimensional differential calculus, whose operators act on smooth non-linear functionals of Gaussian fields (or of more general probabilistic objects). As vividly described in the classical references \cite{Mbook, GelbesBuch}, as well as in the more recent books \cite{n-p-book, Nbook}, since its inception such a theory has generated a staggering number of applications, ranging e.g. from mathematical physics to stochastic differential equations, and from mathematical finance to stochastic geometry, analysis on manifolds and mathematical statistics. On the other hand, the similarly successful and popular Stein's method (as created by Ch. Stein in the classical reference \cite{Stein72} -- see also the 1986 monograph \cite{S86}) is a collection of analytical techniques, allowing one to estimate the distance between the distributions of two random objects, by using characterising differential operators. The discovery in \cite{StMethOnWienChaos} that the two theories can be fruitfully combined has been a major breakthrough in the domain of probabilistic limit theorems and approximations.

\medskip

Since the publication of \cite{StMethOnWienChaos}, the Malliavin-Stein method has generated several hundreds of papers, with ramifications in many (often unexpected) directions, including functional inequalities, random matrix theory, stochastic geometry, non-commutative probability and computer sciences. These developments largely exceed the scope of the present survey, and we invite the interested reader to consult the following references (i)--(vi) for a more detailed presentation: (i) the webpage \cite{WWW} is a constantly updated resource, listing all existing papers written around the Malliavin-Stein method; (ii) the monograph \cite{n-p-book}, written in 2012, contains a self-contained presentation of Malliavin calculus and Stein's method, as applied to functionals of general Gaussian fields, with specific emphasis on random variables belonging to a fixed Wiener chaos; (iii) the text \cite{PRbook} is a collection of surveys, containing an in-depth presentation of variational techniques on the Poisson spaces (including the Malliavin-Stein method), together with their application to asymptotic problems arising in stochastic geometry; (iv) references \cite{MRV21, No20, NPR19l, PR18, PV20, R19, T19} provide a representative overview of applications of Malliavin-Stein techniques to the study of nodal sets associated with Gaussian random fields on two-dimensional manifolds; (v) the papers \cite{NNP21, NPY19} -- and many of the reference therein -- display a pervasive use of Malliavin-Stein techniques to determine rates of convergence in total variation in the Breuer-Major Theorem; (vi) references \cite{CNN20, NN20} deal with the problem of tightness and functional convergence in the Breuer-Major theorem evoked at Point (v).

\medskip

The aim of the present survey is twofolds. On the one hand, we aim at presenting the essence of the Malliavin-Stein's method for functionals of Gaussian fields, by discussing the crucial elements of Malliavin calculus and Stein's method together with their interaction (see Section 2 and Section 3). On the other hand, we aim at introducing the reader to some of the most recent developments on the theory, with specific focus on the general theory of Markov semigroups in a diffusive setting (following the seminal references \cite{ledoux4MT, a-c-p}, as well as \cite{n-p-s, l-n-p-15, l-n-p-16}), and on integration by parts formulae (and associated operators) in the context of functionals of a random point measure \cite{dp, dvz, LPS16, LrSY19, LrPY20+, SY19}. This corresponds to the content of Section 4 and Section 5, respectively.  Finally, Section 6 deals with some recent results (and open problems) concerning $\chi^2$ approximations.
\medskip

From now on, every random object will be defined on a suitable common probability space $(\Omega, \mathscr{F}, P)$, with ${E}$ indicating mathematical expectation with respect to ${P}$. Throughout the paper, the symbol $\mathscr{N}(\mu, \sigma^2)$ will be shorthand for the one-dimensional Gaussian distribution with mean $\mu\in \R$ and variance $\sigma^2>0$. In particular, $X\sim \mathscr{N}(\mu, \sigma^2)$ if and only if
$$
{\P}[X\in A] = \int_A e^{-\frac{(x-\mu)^2}{2\sigma^2}} \frac{dx}{\sqrt{2\pi\sigma^2}},
$$
for every Borel set $A\subset \R$.

\bigskip

\noindent{\bf Acknowledgments}. Giovanni Peccati is supported by the FNR grant {\bf FoRGES} ({\bf R-AGR-3376-10}) at Luxembourg University. Xiaochuan Yang is supported by the EPSRC grant EP/T028653/1.

\section{Elements of Stein's method for normal approximations}\label{sec:stein}
In this section, we briefly introduce the main ingredients of {\bf Stein's method for normal approximations} in dimension one. The approximation will be performed with respect to the \textbf{total variation} and {\bf 1-Wasserstein} distances between the distributions of two random variables; more detailed informations about these distances can be found in \cite[Appendix C]{n-p-book} and the references therein.

\medskip

The crucial intuition behind Stein's method lies in the following heuristic reasoning: {\it it is a well-known fact (see e.g. Lemma \ref{lem:stein}-{(e)} below) that a random variable $X$ has the standard $\mathscr{N}(0,1)$ distribution if and only if 
\begin{equation}\label{e:q}
\E[Xf(X)-f'(X)]=0,
\end{equation}
for every smooth mapping $f: \R\to \R$; heuristically, it follows that, if $X$ is a random variable 
such that the quantity $\E[Xf(X)-f'(X)]$ is close to zero for a large class of test functions $f$, then the distribution of $X$ should be close to Gaussian.}

\medskip

The fact that such a heuristic argument can be made rigorous and applied in a wide array of probabilistic models was the main discovery of Stein's original contribution \cite{Stein72}, where the foundations of Stein's method were first laid. The reader is referred to Stein's monograph \cite{S86}, as well as the books \cite{Chen-book, n-p-book}, for an exhaustive presentation of the theory and its applications (in particular, for extensions to multidimensional approximations).

\medskip

We recall that the total variation distance, between the laws of two real-valued random variables $F$ and $G$, is defined by 
\begin{equation}\label{eq:total-variation}
d_{TV} (F,G) := \sup_{B \in \mathcal{B}(\R)} \Big \vert   \P (F \in B) - \P(G \in B) \Big \vert. 
\end{equation}
One has to note that the topology induced by the distance $d_{TV}$ -- on the set of all probability measures on $\R$ -- is stronger than the topology of convergence in distribution; one sometimes uses the following equivalent representation of $d_{TV}$ (see e.g. \cite[p. 213]{n-p-book}):
\begin{equation}\label{eq:total-equivalent}
d_{TV} (F,G) = \frac{1}{2} \sup \Big\{ \big \vert \E[h(F)] - \E[h(G)] \big \vert  \, : \, h \text{ is Borel measurable and } \Vert h \Vert_\infty \le 1\Big \}.
\end{equation}

\medskip

The 1-Wasserstein distance $d_W$, between the distributions of two real-valued integrable random variables $F$ and $G$, is given by 
\begin{equation}\label{e:w1}
d_{W} (F,G) := \sup_{ h \in {\rm Lip(1)} } \Big \vert   \E [h(F)] - \E[h(G)] \Big \vert,
\end{equation}
where $ {\rm Lip(K)}$, $K>0$ stands for the class of all Lipschitz mappings $h:\R\to \R$ such that $h$ has a Lipschitz constant $\leq K$. As for total variation, the topology induced by $d_{W}$ -- on the set of all probability measures on $\R$ having a finite absolute first moment -- is stronger than the topology of convergence in distribution; it is also interesting to recall the dual representation
\begin{equation}\label{e:w1equiv}
d_{W} (F,G) =  \inf \E\,\big |X-Y \big |,
\end{equation}
where the infimum is taken over all couplings $(X,Y)$ of $F$ and $G$; see e.g. \cite[p. 95]{villani} for a discussion of this fact.
\medskip

The following classical result, whose complete proof can be found e.g. in \cite[p. 64 and p. 67]{n-p-book}, contains all the elements of Stein's method that are needed for our discussion; as for many fundamental findings in the area, such a result can be traced back to \cite{Stein72}.

\begin{lem}\label{lem:stein}
	Let $N \sim \mathscr{N}(0,1)$ be a standard Gaussian random variable. 
	\begin{enumerate}
\item[\rm (a)] Fix $h:\R \to [0,1]$ a Borel-measurable function. Define $f_h: \R \to \R$ as 
\begin{equation}\label{e:fh}
f_h (x):= e^{\frac{x^2}{2}} \int_{-\infty}^{x} \{ h(y) - \E[h(N)]\}   e^{-\frac{y^2}{2}} dy, \quad x \in \R.
\end{equation}
Then, $f_h$ is continuous on $\R$ with $\Vert f_h \Vert_\infty \le \sqrt{\frac{\pi}{2}}$ and $f_h\in {\rm Lip}(2)$. Moreover, there exists a version of $f'_h$ verifying 
\begin{equation}\label{e:steineq} f'_h (x) - x f_h(x) = h(x) - \E[h(N)], \quad \text{ for all } x \in \R.\end{equation}

\item[\rm (b)] Consider $h:\R \to \R\in {\rm Lip}(1)$, and define $f_h: \R \to \R$ as in \eqref{e:fh}.
Then, $f_h$ is of class $C^1$ on $\R$, with $\Vert f'_h \Vert_\infty \le 1$ and $f'_h\in {\rm Lip}(2)$, and $f_h$ solves \eqref{e:steineq}. 

\item[\rm (c)] Let $X$ be an integrable random variable. Then 
$$d_{TV} (X,N) \le \sup_{f} \Big \vert \E\big[f(X)X - f'(X) \big] \Big \vert $$
where the supremum is taken over all pairs $(f,f')$ such that $f$ is a Lipschitz function whose absolute value is bounded by $\sqrt{\frac{\pi}{2}}$, and $f'$ is a version of the derivative of $f$ satisfying $\Vert f' \Vert \le 2$.

\item[\rm (d)] Let $X$ be an integrable random variable. Then,
$$d_{W} (X,N) \le \sup_{f} \Big \vert \E\big[f(X)X - f'(X) \big] \Big \vert $$
where the supremum is taken over all $C^1$ functions $f : \R\to \R$ such that $\Vert f' \Vert \le 2$ and $f'\in {\rm Lip}(2) $.

\item[\rm (e)] Let $X$ be a general random variable. Then $X \sim \mathscr{N}(0,1)$ if and only if $\E[f' (X) - X f(X)]=0$ for every absolutely continuous function $f$ such that $\E \vert f'(N) \vert < +\infty$. 
\end{enumerate}
\end{lem}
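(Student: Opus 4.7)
My plan is to first establish (a) and (b) by recognizing that the Stein equation \eqref{e:steineq} is a first-order linear ODE that admits the integrating factor $e^{-x^2/2}$, then to deduce (c) and (d) by direct substitution into the solved equation, and finally to derive (e) as a consequence of (a).

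For (a), multiplying \eqref{e:steineq} by $e^{-x^2/2}$ rewrites it as $\frac{d}{dx}\bigl(e^{-x^2/2}f(x)\bigr) = (h(x)-\E[h(N)])e^{-x^2/2}$; integrating from $-\infty$ yields the candidate $f_h$ of \eqref{e:fh}, and since $\int_\R (h(y)-\E[h(N)])e^{-y^2/2}\,dy=0$ the same function equals $-e^{x^2/2}\int_x^\infty (h(y)-\E[h(N)])e^{-y^2/2}\,dy$. The Stein equation then holds by differentiation under the integral, which in (a) is valid Lebesgue-a.e.\ because $h$ is only Borel (continuity of $f_h$ still following from dominated convergence), while in (b) the $C^1$ conclusion follows from the assumed Lipschitz regularity of $h$.

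For the bounds I would use the two equivalent representations, choosing on each side of $0$ the one whose tail integral is shortest. Combined with $|h(y)-\E[h(N)]|\le 1$ when $h\in[0,1]$ and the Mills-type estimate $e^{x^2/2}\int_{|x|}^\infty e^{-y^2/2}\,dy\le \sqrt{\pi/2}$, this produces $\|f_h\|_\infty\le\sqrt{\pi/2}$, and then $\|f_h'\|_\infty\le 2$ via the identity $f_h' = xf_h + h - \E[h(N)]$ and a split at $0$ using both representations. For (b), the sharper bounds $\|f_h'\|_\infty\le 1$ and the Lipschitz-$2$ property of $f_h'$ are obtained by exploiting the $1$-Lipschitz regularity of $h$ and differentiating \eqref{e:steineq} once more, so that the growth of $f_h$ is controlled by increments of $h$ rather than its values.

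Parts (c) and (d) follow by evaluating \eqref{e:steineq} at $X$, taking expectations to get $\E[h(X)]-\E[h(N)] = \E[f_h'(X)-Xf_h(X)]$, and then passing to the supremum over the class of $h$ relevant for each metric using \eqref{eq:total-equivalent} and \eqref{e:w1}; the bounds on $(f_h,f_h')$ from (a) and (b) guarantee that $f_h$ lies in the test-function classes appearing on the right-hand sides. For (e), the ``only if'' direction is the classical Gaussian integration by parts obtained by Fubini applied to the identity $\varphi'(y)=-y\varphi(y)$ for the standard density; the converse applies the hypothesis to $f=f_h$ with $h$ an arbitrary Borel function in $[0,1]$, obtaining $\E[h(X)]=\E[h(N)]$ for all such $h$, which by a standard monotone class argument forces $X\sim \mathscr{N}(0,1)$. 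I expect the main obstacle to be the sharp bounds in (a) and (b): crude estimation of the integrand cannot yield the constants $\sqrt{\pi/2}$, $1$, and $2$, and one must carefully split the domain at $0$, exploit both one-sided representations of $f_h$, and invoke precise Mills-ratio inequalities for the Gaussian tail.
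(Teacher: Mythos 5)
Your proposal is correct and follows essentially the same route as the paper, which itself only sketches the argument and defers the computations for (a)--(b) to the cited reference: solve the first-order ODE with integrating factor $e^{-x^2/2}$, use the two one-sided representations of $f_h$ together with Mills-ratio estimates to get the constants, plug \eqref{e:steineq} into \eqref{eq:total-equivalent} and \eqref{e:w1} for (c)--(d), and obtain (e) from Gaussian integration by parts in one direction and from the substitution $f=f_h$ (equivalently, Point (c)) in the other.
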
   

\noindent{\it Sketch of the proof.} Points (a) and (b) can be verified by a direct computation. Point (c) and Point (d) follow by plugging the left-hand side of \eqref{e:steineq} into \eqref{eq:total-equivalent} and \eqref{e:w1}, respectively. Finally, the fact that the relation $\E[f' (X) - X f(X)]=0$ implies that $X \sim \mathscr{N}(0,1)$ is a direct consequence of Point (c), whereas the reverse implication follows by an integration by parts argument. \qed

\section{Normal approximation with Stein's method and Malliavin Calculus}\label{s:malliavin}
The first part of the present section contains some elements of Gaussian analysis and Malliavin calculus. The reader can consult for instance the references \cite{n-p-book, GelbesBuch, Mbook, Nbook} for further details. In Section \ref{ss:m+s} we will shortly explore the connection between Malliavin calculus and the version Stein's method presented in Section \ref{sec:stein}.

\subsection{Isonormal processes, multiple integrals, and the Malliavin operators}\label{ss:isonormal}

Let $ \HH$ be a real separable Hilbert space. For any $q\geq 1$, we write $ \HH^{\otimes q}$ and $ \HH^{\odot q}$ to
indicate, respectively, the $q$th {\bf tensor power} and the $q$th {\bf symmetric tensor power} of $ \HH$; we also set by convention
$ \HH^{\otimes 0} =  \HH^{\odot 0} =\R$. When $\HH = L^2(A,\mathcal{A}, \mu) =:L^2(\mu)$, where $\mu$ is a $\sigma$-finite
and non-atomic measure on the measurable space $(A,\mathcal{A})$, then $ \HH^{\otimes q} \simeq L^2(A^q,\mathcal{A}^q,\mu^q)=:L^2(\mu^q)$, and $ \HH^{\odot q} \simeq L_s^2(A^q,\mathcal{A}^q,\mu^q) := L_s^2(\mu^q)$, 
where $L_s^2(\mu^q)$ stands for the subspace of $L^2(\mu^q)$ composed of those functions that are $\mu^q$-almost everywhere symmetric. We denote by $W=\{W(h) : h\in  \HH\}$
an \textbf{isonormal Gaussian process} over $ \HH$. This means that $W$ is a centered Gaussian family with a covariance structure given by the relation
$\E\left[ W(h)W(g)\right] =\langle h,g\rangle _{ \HH}$. Without loss of generality, we can also assume that $\mathscr{F}=\sigma(W)$, that is, $\mathscr{F}$ is generated by $W$, and use the shorthand notation $L^2(\Omega) := L^2(\Omega, \mathscr{F}, \P)$.

\medskip

For every $q\geq 1$, the symbol $C_{q}$ stands for the $q$th \textbf{Wiener chaos} of $W$, defined as the closed linear subspace of $L^2(\Omega)$
generated by the family $\{H_{q}(W(h)) : h\in  \HH,\left\| h\right\| _{ \HH}=1\}$, where $H_{q}$ is the $q$th {\bf Hermite polynomial}, defined as follows:
\begin{equation}\label{hq}
H_q(x) = (-1)^q e^{\frac{x^2}{2}}
\frac{d^q}{dx^q} \big( e^{-\frac{x^2}{2}} \big).
\end{equation}
We write by convention $C_{0} = \mathbb{R}$. For
any $q\geq 1$, the mapping $I_{q}(h^{\otimes q})=H_{q}(W(h))$ can be extended to a
linear isometry between the symmetric tensor product $ \HH^{\odot q}$
(equipped with the modified norm $\sqrt{q!}\left\| \cdot \right\| _{ \HH^{\otimes q}}$)
and the $q$th Wiener chaos $C_{q}$. For $q=0$, we write by convention $I_{0}(c)=c$, $c\in\mathbb{R}$. 

\medskip 

It is well-known that $L^2(\Omega)$ can be decomposed into the infinite orthogonal sum of the spaces $C_{q}$: this means that any square-integrable random variable
$F\in L^2(\Omega)$ admits the following \textbf{Wiener-It\^{o} chaotic expansion}
\begin{equation}
F=\sum_{q=0}^{\infty }I_{q}(f_{q}),  \label{E}
\end{equation}
where the series converges in $L^2(\Omega)$, $f_{0}=E[F]$, and the kernels $f_{q}\in  \HH^{\odot q}$, $q\geq 1$, are
uniquely determined by $F$. For every $q\geq 0$, we denote by $J_{q}$ the
orthogonal projection operator on the $q$th Wiener chaos. In particular, if
$F\in L^2(\Omega)$ has the form (\ref{E}), then
$J_{q}F=I_{q}(f_{q})$ for every $q\geq 0$.

\medskip

Let $\{e_{k},\,k\geq 1\}$ be a complete orthonormal system in $\HH$. Given $f\in  \HH^{\odot p}$ and $g\in \HH^{\odot q}$, for every
$r=0,\ldots ,p\wedge q$, the \textbf{contraction} of $f$ and $g$ of order $r$
is the element of $ \HH^{\otimes (p+q-2r)}$ defined by
\begin{equation}
f\otimes _{r}g=\sum_{i_{1},\ldots ,i_{r}=1}^{\infty }\langle
f,e_{i_{1}}\otimes \ldots \otimes e_{i_{r}}\rangle _{ \HH^{\otimes
		r}}\otimes \langle g,e_{i_{1}}\otimes \ldots \otimes e_{i_{r}}
\rangle_{ \HH^{\otimes r}}.  \label{v2}
\end{equation}
Notice that the definition of $f\otimes_r g$ does not depend
on the particular choice of $\{e_k,\,k\geq 1\}$, and that
$f\otimes _{r}g$ is not necessarily symmetric; we denote its
symmetrization by $f\widetilde{\otimes }_{r}g\in  \HH^{\odot (p+q-2r)}$.
Moreover, $f\otimes _{0}g=f\otimes g$ equals the tensor product of $f$ and
$g$ while, for $p=q$, $f\otimes _{q}g=\langle f,g\rangle _{ \HH^{\otimes q}}$. 
When $\HH = L^2(A,\mathcal{A},\mu)$ and $r=1,...,p\wedge q$, the contraction $f\otimes _{r}g$ is the element of $L^2(\mu^{p+q-2r})$ given by
\begin{eqnarray}\label{e:contraction}
&& f\otimes _{r}g (x_1,...,x_{p+q-2r})\\
&& = \int_{A^r} f(x_1,...,x_{p-r},a_1,...,a_r)\times \notag\\
&& \quad\quad\quad\quad \times g(x_{p-r+1},...,x_{p+q-2r},a_1,...,a_r)d\mu(a_1)...d\mu(a_r). \notag
\end{eqnarray}

\medskip

It is a standard fact of Gaussian analysis that the following \textbf{multiplication formula} holds: if $f\in  \HH^{\odot p}$ and $g\in  \HH^{\odot q}$, then
\begin{eqnarray}\label{multiplication}
I_p(f) I_q(g) = \sum_{r=0}^{p \wedge q} r! {p \choose r}{ q \choose r} I_{p+q-2r} (f\widetilde{\otimes}_{r}g).
\end{eqnarray}
\smallskip


We now introduce some basic elements of the Malliavin calculus with respect
to the isonormal Gaussian process $W$. 

\medskip

Let $\mathcal{S}$
be the set of all
cylindrical random variables of
the form
\begin{equation}
F=g\left( W(\phi _{1}),\ldots ,W(\phi _{n})\right) ,  \label{v3}
\end{equation}
where $n\geq 1$, $g:\mathbb{R}^{n}\rightarrow \mathbb{R}$ is an infinitely
differentiable function such that its partial derivatives have polynomial growth, and $\phi _{i}\in  \HH$,
$i=1,\ldots,n$.
The \textbf{Malliavin derivative}  of $F$ with respect to $W$ is the element of $L^2(\Omega , \HH)$ defined as
\begin{equation*}
DF\;=\;\sum_{i=1}^{n}\frac{\partial g}{\partial x_{i}}\left( W(\phi_{1}),\ldots ,W(\phi _{n})\right) \phi _{i}.
\end{equation*}
In particular, $DW(h)=h$ for every $h\in  \HH$. By iteration, one can define the $m${\bf th derivative} $D^{m}F$, which is an element of $L^2(\Omega , \HH^{\odot m})$,
for every $m\geq 2$.
For $m\geq 1$ and $p\geq 1$, ${\mathbb{D}}^{m,p}$ denotes the closure of
$\mathcal{S}$ with respect to the norm $\Vert \cdot \Vert _{m,p}$, defined by
the relation
\begin{equation*}
\Vert F\Vert _{m,p}^{p}\;=\; \E\left[ |F|^{p}\right] +\sum_{i=1}^{m} \E\left[
\Vert D^{i}F\Vert _{ \HH^{\otimes i}}^{p}\right].
\end{equation*}
We often use the (canonical) notation $\mathbb{D}^{\infty} := \bigcap_{m\geq 1}
\bigcap_{p\geq 1}\mathbb{D}^{m,p}$. For example, it is a well-known fact that any random variable $F$ that is a finite linear combination of multiple Wiener-It\^o integrals is an element of $\mathbb{D}^\infty$. The Malliavin derivative $D$ obeys the following \textbf{chain rule}. If
$\varphi :\mathbb{R}^{n}\rightarrow \mathbb{R}$ is continuously
differentiable with bounded partial derivatives and if $F=(F_{1},\ldots
,F_{n})$ is a vector of elements of ${\mathbb{D}}^{1,2}$, then $\varphi
(F)\in {\mathbb{D}}^{1,2}$ and
\begin{equation}\label{e:chainrule}
D\,\varphi (F)=\sum_{i=1}^{n}\frac{\partial \varphi }{\partial x_{i}}(F)DF_{i}.
\end{equation}

\medskip

Note also that a random variable $F$ as in (\ref{E}) is in ${\mathbb{D}}^{1,2}$ if and only if
$\sum_{q=1}^{\infty }q\|J_qF\|^2_{L^2(\Omega)}<\infty$
and in this case one has the following explicit relation: $$\E\left[ \Vert DF\Vert _{ \HH}^{2}\right]
=\sum_{q=1}^{\infty }q\|J_qF\|^2_{L^2(\Omega)}.$$ If $ \HH=
L^{2}(A,\mathcal{A},\mu )$ (with $\mu $ non-atomic), then the
derivative of a random variable $F$ as in (\ref{E}) can be identified with
the element of $L^2(A \times \Omega )$ given by
\begin{equation}
D_{t}F=\sum_{q=1}^{\infty }qI_{q-1}\left( f_{q}(\cdot ,t)\right) ,\quad t \in A.  \label{dtf}
\end{equation}


The operator $\LL$, defined as $\LL=\sum_{q=0}^{\infty }-qJ_{q}$, is the \textbf{infinitesimal generator of the Ornstein-Uhlenbeck semigroup}. The domain of $\LL$ is
\begin{equation*}
\mathrm{Dom}\LL=\{F\in L^2(\Omega ):\sum_{q=1}^{\infty }q^{2}\left\|
J_{q}F\right\| _{L^2(\Omega )}^{2}<\infty \}=\mathbb{D}^{2,2}\text{.}
\end{equation*}


For any $F \in L^2(\Omega )$, we define $\LL^{-1}F =\sum_{q=1}^{\infty }-\frac{1}{q} J_{q}(F)$. The operator $\LL^{-1}$ is called the
\textit{pseudo-inverse} of $\LL$. Indeed, for any $F \in L^2(\Omega )$, we have that $\LL^{-1} F \in  \mathrm{Dom}\LL
= \mathbb{D}^{2,2}$,
and
\begin{equation}\label{Lmoins1}
\LL \LL^{-1} F = F - \E(F).
\end{equation}

The following infinite dimensional Malliavin integration by parts formula plays a crucial role in the analysis (see for instance \cite[Section 2.9]{n-p-book} for a proof).

\begin{lem}\label{L : Tech1}
	Suppose that $F\in\mathbb{D}^{1,2}$ and $G\in L^2(\Omega)$. Then, ${\rm \LL}^{-1}G \in \mathbb{D}^{2,2}$ and
	\begin{equation}
	\E[FG] = \E[F]\E[G]+\E[\langle DF,-D\LL^{-1}G\rangle_{\HH}].
	\end{equation}
\end{lem}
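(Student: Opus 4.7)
My plan has three ingredients: (i) verify the membership $\LL^{-1}G \in \mathbb{D}^{2,2}$; (ii) reduce the identity to the integration-by-parts relation
\begin{equation*}
\E[F \cdot \LL H] \;=\; -\E[\langle DF, DH\rangle_{\HH}], \qquad F \in \mathbb{D}^{1,2},\ H \in \mathbb{D}^{2,2};
\end{equation*}
and (iii) prove this last relation via the Wiener--It\^o chaos decomposition.

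For (i), writing $G = \sum_{q\geq 0} I_q(g_q)$ one has $J_q(\LL^{-1}G) = -\frac{1}{q} I_q(g_q)$ for $q\geq 1$, so
\begin{equation*}
\sum_{q\geq 1} q^{2}\, \|J_q(\LL^{-1}G)\|_{L^2(\Omega)}^{2} \;=\; \sum_{q\geq 1} \|J_q G\|_{L^2(\Omega)}^{2} \;\leq\; \|G\|_{L^2(\Omega)}^{2} \;<\;\infty,
\end{equation*}
which by the spectral characterisation $\mathrm{Dom}\,\LL = \mathbb{D}^{2,2}$ recalled above yields $\LL^{-1}G \in \mathbb{D}^{2,2}$. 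For (ii), setting $H := \LL^{-1}G$, equation \eqref{Lmoins1} gives $\LL H = G - \E[G]$, so the target identity becomes $\E[F(G-\E[G])] = -\E[\langle DF, DH\rangle_{\HH}]$, a special instance of the displayed integration-by-parts relation.

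For (iii), I would expand $F = \sum_{p\geq 0} I_p(f_p)$ and $H = \sum_{q\geq 0} I_q(h_q)$, the memberships $F \in \mathbb{D}^{1,2}$ and $H \in \mathrm{Dom}\,\LL$ translating respectively to $\sum_{p\geq 1} p \cdot p!\, \|f_p\|_{\HH^{\otimes p}}^{2} < \infty$ and $\sum_{q\geq 1} q^{2} \cdot q!\, \|h_q\|_{\HH^{\otimes q}}^{2} < \infty$. Working in the realisation $\HH = L^{2}(\mu)$ and using the explicit derivative formula \eqref{dtf}, the orthogonality of distinct Wiener chaoses together with the isometry $\E[I_{p-1}(\phi)\, I_{p-1}(\psi)] = (p-1)!\, \langle \phi, \psi\rangle_{\HH^{\otimes(p-1)}}$, applied to $\phi = f_p(\cdot, t)$ and $\psi = h_p(\cdot, t)$ and then integrated in $t$, yields
\begin{equation*}
\E[\langle DF, DH\rangle_{\HH}] \;=\; \sum_{p\geq 1} p \cdot p!\, \langle f_p, h_p\rangle_{\HH^{\otimes p}} \;=\; -\E[F \cdot \LL H],
\end{equation*}
the last equality coming from $\LL H = -\sum_q q\, I_q(h_q)$ combined with the chaos isometry $\E[I_p(f_p) I_p(h_p)] = p!\,\langle f_p, h_p\rangle_{\HH^{\otimes p}}$. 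The statement for an arbitrary separable $\HH$ follows by standard transfer, since every object in sight is intrinsically defined from the isonormal process.

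The main technical obstacle is justifying the formal manipulations above when $F$ and $H$ are genuine infinite chaos series rather than polynomials. My plan is to run the identity first on the truncated projections $F_N := \sum_{p=0}^{N} I_p(f_p)$ and $H_N := \sum_{q=0}^{N} I_q(h_q)$, where all sums are finite and the orthogonality arguments are immediate, and then to pass to the limit. The convergences $F_N \to F$ in $\mathbb{D}^{1,2}$ and $H_N \to H$ in $\mathbb{D}^{2,2}$ entail $DF_N \to DF$ and $DH_N \to DH$ in $L^{2}(\Omega; \HH)$, as well as $\LL H_N \to \LL H$ in $L^{2}(\Omega)$, so that both sides of the target identity pass to the limit by the Cauchy--Schwarz inequality.
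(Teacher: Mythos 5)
Your argument is correct. Note that the survey itself does not prove this lemma but only cites \cite[Section 2.9]{n-p-book}; the proof given there runs through the divergence (Skorohod) operator $\delta$, using the identity $\LL=-\delta D$ together with the duality relation $\E[F\,\delta(u)]=\E[\langle DF,u\rangle_{\HH}]$ applied to $u=-D\LL^{-1}G$. Your route bypasses $\delta$ entirely: you reduce to $\E[F\,\LL H]=-\E[\langle DF,DH\rangle_{\HH}]$ via \eqref{Lmoins1} and then verify this identity chaos by chaos, which is essentially an unpacked, self-contained version of the same duality. The price you pay is the truncation-and-limit step, which you handle correctly: the spectral characterisations of $\mathbb{D}^{1,2}$ and of $\mathrm{Dom}\,\LL=\mathbb{D}^{2,2}$ give exactly the convergences $DF_N\to DF$ in $L^2(\Omega;\HH)$ and $\LL H_N\to\LL H$ in $L^2(\Omega)$ needed to pass to the limit by Cauchy--Schwarz. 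The verification of $\LL^{-1}G\in\mathbb{D}^{2,2}$ via $\sum_{q\ge1}q^2\|J_q(\LL^{-1}G)\|^2_{L^2(\Omega)}=\sum_{q\ge1}\|J_qG\|^2_{L^2(\Omega)}\le\|G\|^2_{L^2(\Omega)}$ is also exactly right. What the textbook approach buys is brevity once $\delta$ and its duality with $D$ are available; what yours buys is independence from that extra operator, at the cost of working in the realisation $\HH=L^2(\mu)$ (harmless, as you note, by transfer).
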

Inspired by the Malliavin integration by parts formula appearing in Lemma \ref{L : Tech1}, we now introduce a class of \textbf{iterated Gamma operators}. We will need such operators in Section 
\ref{sec:MS2W}.
 \begin{Def}[See Chapter 8 in \cite{n-p-book}]\label{Def : Gamma} {\rm Let $F\in
	\mathbb{D}^{\infty}$; the sequence of random variables
	$\{\Gamma_i(F)\}_{i\geq 0}\subset \mathbb{D}^\infty$ is recursively
	defined as follows. Set $\Gamma_0(F) = F$ and, for every $i\geq
	1$,
	\[\Gamma_{i}(F) = \langle DF,-DL^{-1}\Gamma_{i-1}(F)\rangle_{\HH}.
	\]
}
\end{Def}

\begin{Def}[Cumulants]\label{D : cum}{\rm Let $F$ be a real-valued random variable such that $\E|F|^m<\infty$ for some integer
		$m\geq 1$, and write $\phi_F(t) = \E[e^{itF}]$, $t\in\R$, for the characteristic function of $F$.
		Then, for $r=1,...,m$, the $r$th \textbf{cumulant} of $F$, denoted by $\kappa_r(F)$, is given by
		\begin{equation}
		\kappa_r (F) = (-i)^r \frac{d^r}{d t^r} \log \phi_F (t)|_{t=0}.
		\end{equation}}
\end{Def}

\begin{Rem}{\rm When $\E(F)=0$, then the first four cumulants of $F$ are the following: $\kappa_1(F) = \E[F]=0$, $\kappa_2(F) = \E[F^2]= \Var(F)$,
		$\kappa_3(F) = \E[F^3]$, and \[\kappa_4(F) = \E[F^4] - 3\E[F^2]^2.
		\]
	}
\end{Rem}
The following statement explicitly connects the expectation of the random variables $\Gamma_r(F)$ to the cumulants of $F$. 
\begin{Prop}[See Chapter 8 in \cite{n-p-book}]
Let $F\in \mathbb{D}^{\infty}$. Then $\kappa_r(F)= (r-1)!\E[\Gamma_{r-1}(F)]$  for every $r \ge 1$.
\end{Prop}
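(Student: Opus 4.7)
The plan is to derive an explicit Taylor expansion of the cumulant generating function $t\mapsto \log\phi_F(t)$ at $t=0$, with coefficients given by the expectations $\{\E[\Gamma_k(F)]\}_{k\ge 0}$; the claimed identity will then follow by matching Taylor coefficients against Definition \ref{D : cum}. The base case $r=1$ is immediate, since $\Gamma_0(F)=F$ and $\kappa_1(F)=\E[F]$.

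The heart of the argument is a one-step recursion linking $\psi_k(t):=\E[\Gamma_k(F)\,e^{itF}]$ and $\psi_{k+1}(t)$. Splitting $e^{itF}=\cos(tF)+i\sin(tF)$ so that the chain rule \eqref{e:chainrule} applies to each real-valued component (both being $C^1$ with bounded derivatives), I would obtain $De^{itF}=it\,e^{itF}\,DF$. Applying Lemma \ref{L : Tech1} with this exponential in place of $F$ and with $G=\Gamma_k(F)$ (which belongs to $L^2(\Omega)$ since $F\in\mathbb{D}^\infty$ implies $\Gamma_k(F)\in\mathbb{D}^\infty$), and invoking the very definition of $\Gamma_{k+1}(F)$, yields
\begin{equation*}
\psi_k(t)=\E[\Gamma_k(F)]\,\phi_F(t)+it\,\psi_{k+1}(t).
\end{equation*}
Iterating this identity $r$ times starting from $k=0$, and noting that $\phi_F'(t)=i\,\psi_0(t)$, produces the exact formula
\begin{equation*}
\phi_F'(t)=i\,\phi_F(t)\sum_{k=0}^{r-1}(it)^k\,\E[\Gamma_k(F)]+i^{r+1}t^r\,\psi_r(t).
\end{equation*}

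Since $F\in\mathbb{D}^\infty$ has moments of every order, $\phi_F$ is of class $C^\infty$ with $\phi_F(0)=1$, so $\log\phi_F$ is smooth in a neighbourhood of the origin and I may divide the previous display by $\phi_F(t)$. Rearranging gives
\begin{equation*}
(\log\phi_F)'(t)=\sum_{k=0}^{r-1}i^{k+1}t^k\,\E[\Gamma_k(F)]+O(t^r)\quad\text{as }t\to 0.
\end{equation*}
Matching the coefficient of $t^{r-1}$ in this expansion with the Taylor expansion of $(\log\phi_F)'$ at $0$, and then using the identity $\kappa_r(F)=(-i)^r(\log\phi_F)^{(r)}(0)$ from Definition \ref{D : cum} together with $(-i)^r i^r=1$, immediately delivers $\kappa_r(F)=(r-1)!\,\E[\Gamma_{r-1}(F)]$.

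The main obstacle is technical rather than conceptual: it lies in rigorously justifying the iterated use of the Malliavin chain rule and of Lemma \ref{L : Tech1}. The chain rule as stated is for real-valued functions with bounded partial derivatives, so a passage through $\cos$ and $\sin$ is required at each order; and each integration by parts step needs $\Gamma_k(F)\in\mathbb{D}^{1,2}$, which is precisely the reason why the assumption $F\in\mathbb{D}^\infty$ is convenient, guaranteeing that every iterated Gamma operator lies in $\mathbb{D}^\infty$ and that the calculation above can be pushed to all orders.
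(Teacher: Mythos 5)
Your proof is correct and is essentially the argument behind the paper's citation: the survey states the result with a pointer to Chapter 8 of \cite{n-p-book}, whose proof rests on the same engine you use, namely iterating the integration by parts formula of Lemma \ref{L : Tech1} to generate $\Gamma_{k+1}(F)$ from $\Gamma_k(F)$ and then identifying cumulants via the Taylor expansion of $\log\phi_F$ at the origin. The only (cosmetic) difference is that the cited proof runs the same recursion against the monomials $F^m$ and the classical moment--cumulant recursion rather than against $e^{itF}$ directly; your handling of the technical points (chain rule through $\cos$ and $\sin$, and $\Gamma_k(F)\in\mathbb{D}^\infty$ as guaranteed by Definition \ref{Def : Gamma}) is exactly what is needed.
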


As announced, in the next subsection we show how to use the above Malliavin machinery in order to study the Stein's bounds presented in Section \ref{sec:stein}. 

\subsection{Connection with Stein's method}\label{ss:m+s}
Let $F \in\mathbb{D}^{1,2}$ with $\E[F]=0$ and $\E[F^2]=1$. Take a $C^1$ function such that $\Vert f \Vert \le \sqrt{\frac{\pi}{2}}$ and $\Vert f' \Vert \le 2$. Using the Malliavin integration by parts formula stated in Lemma \ref{L : Tech1} together with the chain rule \eqref{e:chainrule}, we can write 
\begin{equation}
\begin{split}
\Big \vert \E[f'(F) - F f(F)] \Big \vert &= \Big \vert \E[f'(F) \left(    1- \langle DF,-D\LL^{-1}F\rangle_{\HH} \right)]\Big \vert \label{e:ibpg} \\
&\le 2 \, \E \Big\vert  1- \langle DF,-D\LL^{-1}F\rangle_{\HH} \Big \vert. \notag
\end{split}
\end{equation}
If we furthermore assume that $F \in\mathbb{D}^{1,4}$, then the random variable $ 1- \langle DF,-D\LL^{-1}F\rangle_{\HH}$ is square-integrable, using the Cauchy-Schwarz inequality we infer that 
$$\Big \vert \E[f'(F) - F f(F)] \Big \vert  \le 2 \sqrt{\Var \left( \langle DF,-D\LL^{-1}F\rangle_{\HH} \right)}.$$
Note that in above we used the fact that $\E[\langle DF,-D\LL^{-1}F\rangle_{\HH}]= \E[F^2]=1$.  The above arguments combined with Lemma \ref{lem:stein} yield immediately \footnote{This is not completely accurate: attention has indeed to be paid to the fact that the function $f_h$ in \eqref{e:steineq} is only almost everywhere differentiable, and $F$ does not necessarily have a density -- see \cite[Theorem 5.2]{Nlecturenotes} for a detailed proof based on Lusin Theorem.} the next crucial statement, originally proved in \cite{StMethOnWienChaos}.

\begin{Thm}\label{t:gb}
Let $F \in\mathbb{D}^{1,2}$ be a generic random element with $\E[F]=0$ and $\E[F^2]=1$. Let $N \sim \mathscr{N}(0,1)$. Assume further that $F$ has a density with respect to the Lebesgue measure. Then, 
$$d_{TV}(F,N) \le 2\, \E \Big\vert  1- \langle DF,-D\LL^{-1}F\rangle_{\HH} \Big \vert.$$
Moreover, assume that $F \in\mathbb{D}^{1,4}$, then $$ d_{TV}(F,N) \le 2   \sqrt{\Var \left( \langle DF,-D\LL^{-1}F\rangle_{\HH} \right)}.$$
In particular case, if $F=I_q(f)$ belongs to the Wiener chaos of order $q \ge 2$, then
\begin{equation}\label{eq:4mt-bound}
d_{TV}(F,N) \le 2   \sqrt{\frac{q-1}{3q}\Big( \E[F^4] - 3\Big)}.
\end{equation}
\end{Thm}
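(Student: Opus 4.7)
The plan is to combine Lemma~\ref{lem:stein}-(c) with the Malliavin integration by parts formula in Lemma~\ref{L : Tech1}, essentially linearizing the four displays in the paragraph preceding the statement. First, Lemma~\ref{lem:stein}-(c) reduces the task to uniformly bounding $|\E[f'(F) - F f(F)]|$ over $f$ with $\|f\|_\infty \le \sqrt{\pi/2}$ and $\|f'\|_\infty \le 2$. Applying Lemma~\ref{L : Tech1} with $G = F$, combined with the chain rule \eqref{e:chainrule} for the composite variable $f(F)$, yields
$$\E[F f(F)] = \E\bigl[f'(F)\,\langle DF, -D\LL^{-1}F\rangle_{\HH}\bigr];$$
the same lemma applied to the pair $(F,F)$ with $\E[F]=0$ shows that $\E[\langle DF, -D\LL^{-1}F\rangle_{\HH}] = \E[F^2] = 1$. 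Consequently, the random variable $R := 1 - \langle DF, -D\LL^{-1}F\rangle_{\HH}$ is centered, and plugging $\|f'\|_\infty \le 2$ produces the first bound $d_{TV}(F,N) \le 2\,\E|R|$. The second bound follows from the Cauchy-Schwarz inequality applied to $\E|R|$, which is legitimate under the extra hypothesis $F \in \mathbb{D}^{1,4}$ because then $\langle DF, -D\LL^{-1}F\rangle_{\HH}$ is in $L^2$.

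The one subtle point in the above, already flagged in the authors' footnote, is that the Stein solution $f_h$ from \eqref{e:fh} is only Lipschitz, so $f_h'$ is defined merely almost everywhere, and one cannot apply Lemma~\ref{L : Tech1} to $f_h(F)$ blindly via the chain rule. I would handle this by using Lusin's theorem to approximate $f_h'$ by continuous functions on a set of large Lebesgue measure, and then use the hypothesis that $F$ admits a density with respect to Lebesgue measure to transfer the identity to an expectation-level statement; the detailed argument is carried out in \cite[Theorem 5.2]{Nlecturenotes}, so I would import it rather than reproduce it here.

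The assertion \eqref{eq:4mt-bound} is where the genuine work lies, and this I regard as the main obstacle. For $F = I_q(f)$ one has $\LL^{-1}F = -q^{-1}F$, whence $\langle DF, -D\LL^{-1}F\rangle_{\HH} = q^{-1}\|DF\|_{\HH}^2$, and the previous step reduces matters to showing
$$\frac{1}{q^2}\,\Var\bigl(\|DF\|_{\HH}^2\bigr) \le \frac{q-1}{3q}\bigl(\E[F^4] - 3\bigr),$$
which is the Nualart-Peccati identity underlying the fourth moment theorem. My approach would be to use \eqref{dtf} to write $\|DF\|_{\HH}^2 = q^2 \int_A I_{q-1}(f(\cdot,t))^2\, d\mu(t)$, and to expand each factor $I_{q-1}(f(\cdot,t))^2$ via the multiplication formula \eqref{multiplication}; after integrating in $t$, this realizes $q^{-1}\|DF\|_{\HH}^2 - 1$ as a finite sum of multiple integrals of explicit kernels involving the contractions $f \widetilde{\otimes}_{r+1} f$, $0 \le r \le q-2$. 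Orthogonality of Wiener chaoses then expresses $\Var(q^{-1}\|DF\|_{\HH}^2)$ as an explicit non-negative linear combination of the norms $\|f \widetilde{\otimes}_s f\|^2$ for $s=1,\ldots,q-1$. A parallel application of \eqref{multiplication} to $\E[F^4] = \E[I_q(f)^2 \cdot I_q(f)^2]$ expresses $\E[F^4] - 3$ as another non-negative linear combination of the very same norms. Comparing the coefficients term-by-term and verifying that the ratio is bounded by $(q-1)/(3q)$ is an elementary but non-trivial combinatorial bookkeeping exercise, and it is precisely this coefficient comparison, rather than any analytic step, that carries the weight of the fourth moment theorem.
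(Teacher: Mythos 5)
Your proposal is correct, and for the first two bounds it follows exactly the paper's own route: Lemma~\ref{lem:stein}-(c), then Lemma~\ref{L : Tech1} applied to the pair $(f(F),F)$ via the chain rule \eqref{e:chainrule}, the observation that $\E[\langle DF,-D\LL^{-1}F\rangle_\HH]=\E[F^2]=1$, and Cauchy--Schwarz under the $\mathbb{D}^{1,4}$ hypothesis; you also correctly identify and correctly dispose of the a.e.-differentiability issue for $f_h$ (the paper handles it identically, by the footnoted appeal to Lusin's theorem and the existence of a density). The only place where you genuinely diverge is \eqref{eq:4mt-bound}, which the survey states without proof: you supply the classical Nualart--Peccati contraction computation, expanding $q^{-1}\lVert DF\rVert_\HH^2-1$ and $\E[F^4]-3$ as nonnegative combinations of the norms $\lVert f\widetilde{\otimes}_r f\rVert^2$, $r=1,\dots,q-1$, and comparing coefficients; the relevant identities give coefficient ratio $r/(3q)\le (q-1)/(3q)$, so this does deliver the stated constant. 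It is worth noting that the paper's own later argument in this spirit (Proposition~\ref{prop:variance-estimate}, via spectral decomposition of $X^2-1$ and $\Gamma$-calculus) yields only $\Var(q^{-1}\Gamma[X])\le \tfrac13(\E[X^4]-3)$, i.e.\ the constant $1/3$ rather than $(q-1)/(3q)$: your contraction bookkeeping is the sharper route and is the one actually needed to justify \eqref{eq:4mt-bound} as written, at the cost of being specific to the Wiener-chaos setting rather than working for a general diffusive fourth moment structure.
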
 

Note that, by virtue of Lemma \ref{lem:stein}, similar bounds can be immediately obtained for the Wasserstein distance $d_W$ (and many more -- see \cite[Chapter 5]{n-p-book}). In particular, the previous statement allows one to recover the following central limit theorem for chaotic random variables, first proved in \cite{FmtOriginalReference}.

\begin{Cor}[Fourth Moment Theorem]\label{c:fmt}
	Let $\{F_n \}_{ n \ge 1}=\{ I_q(f_n) \}_{ n\ge1}$ be a sequence of random elements in a fixed Wiener chaos of order $q \ge2$ such that $\E[F^2_n] = q! \Vert f_n\Vert^2 =1$. Assume that $N \sim \mathscr{N}(0,1)$. Then, as $n$ tends to infinity, the following assertions are equivalent.
	\begin{description}
		\item[(I)] $F_n \longrightarrow N$ in distribution.
		\item[(II)] $\E[F^4_n] \longrightarrow 3 \, (= \E[N^4])$. 
		\end{description}
\end{Cor}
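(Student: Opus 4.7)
The plan is to deduce the Fourth Moment Theorem as a direct consequence of the Malliavin-Stein bound \eqref{eq:4mt-bound} from Theorem~\ref{t:gb}, combined with one additional classical ingredient (hypercontractivity) to handle the direction in which convergence in distribution is upgraded to convergence of moments. One direction is essentially contained in Theorem~\ref{t:gb}, while the converse requires a tightness/uniform-integrability argument on a fixed Wiener chaos.

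For the implication $\mathrm{(II)}\Rightarrow\mathrm{(I)}$, I would first remark that every $F_n=I_q(f_n)$ with $q\ge 2$ lies in $\mathbb{D}^{1,4}$ (as a finite sum of multiple integrals, it is in $\mathbb{D}^\infty$), and that $F_n$ admits an absolutely continuous law with respect to Lebesgue measure (Shigekawa's classical result that nonzero chaoses of order $\ge 1$ have a density). Hence Theorem~\ref{t:gb} applies and yields
\begin{equation*}
d_{TV}(F_n,N)\;\le\; 2\sqrt{\frac{q-1}{3q}\bigl(\E[F_n^4]-3\bigr)}.
\end{equation*}
Under assumption (II) the right-hand side tends to $0$, and since $d_{TV}$ convergence is stronger than convergence in distribution, (I) follows.

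For the converse $\mathrm{(I)}\Rightarrow\mathrm{(II)}$, the sole point to settle is that $\E[F_n^4]\to \E[N^4]=3$. Convergence in distribution alone is insufficient; the missing ingredient is \emph{hypercontractivity} of the Ornstein-Uhlenbeck semigroup (equivalently, Nelson's inequality), which asserts that, on a fixed Wiener chaos $C_q$, all $L^p$-norms ($p\ge 2$) are equivalent: there exists a constant $c_{p,q}>0$ such that $\E[G^{2p}]\le c_{p,q}\,\E[G^2]^p$ for every $G\in C_q$. Applied to $F_n$ with $\E[F_n^2]=1$ and, say, $p=4$, this gives $\sup_n \E[F_n^{8}]<\infty$, whence $\{F_n^4\}$ is uniformly integrable. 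Combined with $F_n\to N$ in distribution (and the continuous mapping theorem for $x\mapsto x^4$), uniform integrability upgrades weak convergence to $L^1$-convergence, yielding $\E[F_n^4]\to \E[N^4]=3$.

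The main conceptual obstacle of the theorem, namely the bound \eqref{eq:4mt-bound}, has already been absorbed into Theorem~\ref{t:gb}; that bound ultimately rests on the identity $\langle DF_n,-D\mathrm{L}^{-1}F_n\rangle_{\HH}=\tfrac{1}{q}\|DF_n\|_{\HH}^2$ for $F_n\in C_q$, the multiplication formula \eqref{multiplication} to expand $\|DF_n\|_{\HH}^2$ into a sum of multiple integrals, and the orthogonality of distinct Wiener chaoses to identify $\mathrm{Var}\bigl(\tfrac{1}{q}\|DF_n\|_{\HH}^2\bigr)$ with a weighted sum of squared norms of the symmetrized contractions $f_n\widetilde{\otimes}_r f_n$, $1\le r\le q-1$, which in turn is controlled by $\tfrac{q-1}{3q}(\E[F_n^4]-3)$. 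Once Theorem~\ref{t:gb} is accepted, the remaining work in the corollary is purely the hypercontractivity/uniform-integrability step described above.
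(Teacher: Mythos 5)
Your argument is correct and follows essentially the same route the paper indicates: the implication (II)$\Rightarrow$(I) is read off directly from the quantitative bound \eqref{eq:4mt-bound} in Theorem~\ref{t:gb}, and the converse is obtained from hypercontractivity of a fixed Wiener chaos via uniform integrability of $\{F_n^4\}$, exactly as the paper notes (cf.\ the remark following Theorem~\ref{Thm:4mt-general} invoking \eqref{Hyper2}). Your additional care about $F_n\in\mathbb{D}^{1,4}$ and the existence of a density for nonzero chaotic random variables is a welcome precision that the paper leaves implicit.
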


As demonstrated by the webpage \cite{WWW}, the `fourth moment theorem' stated in Corollary \ref{c:fmt} has been the starting point of a very active line of research, composed of several hundreds papers connected with disparate applications. In the next section, we will implicitly provide a general version of Theorem \ref{t:gb} (with the 1-Wasserstein distance replacing the total variation distance), whose proof relies only on the spectral properties of the Ornstein-Uhlenbeck generator $\LL$ and on the so-called $\Gamma$ calculus (see e.g. \cite{b-g-l}).

\section{The Markov triple approach}\label{s:mt}
In this section, we introduce a general framework for studying and generalizing the fourth moment phenomenon appearing in the statement of Corollary \ref{c:fmt}. The forthcoming approach was first introduced in \cite{ledoux4MT} by M. Ledoux, and then further developed and generalizes in \cite{a-c-p, a-m-m-p}. 
\subsection{Diffusive fourth moment structures}
We start with definition of our general setup.
\begin{Def}\label{d:dfms}{\rm 
	A {\bf diffusive fourth moment structure} is a triple $(E,\mu,\LL)$ such that:
	\begin{itemize}
		\item[(a)] $(E,\mu)$ is a probability space;
		\item[(b)] $\LL$ is a symmetric unbounded operator defined on some dense subset of $L^2(E,\mu)$, that we denote by $\mathcal{D}(\LL)$ (the set $\mathcal{D}(\LL)$ is called the {\bf domain} of $\LL$);
		\item[(c)] the associated {\bf carr\'e-du-champ operator} $\Gamma$ is a symmetric bilinear operator, and is defined by
		\begin{equation}\label{Gamma}
		2 \Gamma\left[X,Y\right] := \LL \left[XY\right] - X \LL \left[Y\right] - Y \LL \left[X\right];
		\end{equation}
		\item[(d)] the operator $\LL$ is {\bf diffusive}, meaning that, for any $\mathcal{C}^2_b$ function $\phi \colon \R \to \R$, any $X \in \mathcal{D}(\LL)$, it holds that $\phi(X)\in\mathcal{D}(\LL)$ and
		\begin{equation}\label{diff}
		\LL \left[\phi(X)\right] = \phi'(X) \LL [X] + \phi''(X) \Gamma[X,X];
		\end{equation}
		Note that $\LL[1]=0$ (by taking $\phi=1 \in \mathcal{C}^2_b$).  The latter property is equivalent to say that operator $\Gamma$ satisfies in the chain rule: 
		$$\Gamma \left[ \phi(X),X \right] = \phi'(X) \Gamma[X,X];$$
		
		\item[(e)]  the operator $-\LL$ diagonalizes the space $L^2(E,\mu)$ with $\textbf{sp}(-\LL)=\N$, meaning that 
		\begin{equation*}
		L^2(E,\mu)=\bigoplus_{i=0}^\infty \K(\LL+i \Id);
		\end{equation*}
		\item[(f)] for any pair of eigenfunctions $(X,Y)$ of the operator $- \LL$ associated with the eigenvalues $(p_1,p_2)$,\begin{equation}\label{fundamental-assumtionbis}
		X Y\in \bigoplus_{i \le p_1+p_2  } \K \left( \LL +  i \Id \right).
		\end{equation}
	\end{itemize}
	}
\end{Def}
In this context, we usually write $\Gamma[X]$ instead of $\Gamma[X,X]$ and $\E$ denotes the integration against probability measure $\mu$.

\begin{Rem}{\rm 
	\begin{enumerate}
\item[(1)]	Property (d) together with symmetric property of the operator $\LL$ determine a functional calculus through the following fundamental \textbf{integration by parts formula}: for any $X,Y$ in $\mathcal{D}(\LL)$ and 
	$\phi\in\mathcal{C}^2_b$,
	\begin{equation}\label{by-parts}
	\E \left[\phi'(X)\Gamma\left[X,Y\right]\right] = - \E\left[\phi(X) \LL \left[Y\right]\right] = -\E\left[Y \LL \left[\phi(X)\right]\right].
	\end{equation}
	\item[(2)] The results in this section can be stated under the weaker assumption that $\textbf{sp}(-\LL) =\{ 0 = \lambda_0 < \lambda_1, \cdots,\lambda_k < \cdots\} \subset \R_+$ is discrete. However, to keep a transparent presentation, we restrict ourself to the 
	assumption $\textbf{sp}(-\LL)=\N$. The reader is referred to \cite{a-c-p} for further details.\\
	\item[(3)] We point out that, by a recursive argument, assumption $(\ref{fundamental-assumtionbis})$ yields that for any $X\in\K(\LL+p\Id)$ and any polynomial $P$ of degree $m$, we have 
	\begin{equation}\label{fundamental-assumtion}
	P(X) \in \bigoplus_{i \le mp  } \K \left( \LL +  i \Id \right).
	\end{equation}

\item[(4)] The eigenspaces of a diffusive fourth moment structure are \textbf{hypercontractive} (see \cite{bakrey} for details and sufficient conditions), that is, there exists a constant $C(M,k)$ such that for any $X\in\bigoplus_{i \le M} \K \left( \LL + i \Id \right)$:
		\begin{equation}\label{Hyper2}
		\E(X^{2k})\leq C(M,k) ~\E(X^2)^k.
		\end{equation}

\item[(5)] Property (f) in the previous definition roughly implies that eigenfunctions of $\LL$ in a diffusive fourth moment structure behave like orthogonal polynomial with respect to multiplication. 	
	\end{enumerate}}	
		
\end{Rem}

For further details on our setup, we refer the reader to \cite{b-g-l} as well as  \cite{a-c-p,a-m-m-p}. The next example describes some diffiusive fourth moment structures. The reader can consult \cite[Section 2.2]{a-m-m-p} for two classical methods for building {further} diffusive fourth moment structures {starting from known ones}.

\begin{Ex}{\rm 
	\begin{itemize}
	\item[(a)] \textbf{ Finite-Dimensional Gaussian Structures:} Let $d\geq 1$ and denote by $\gamma_d$ the 
	$d$-dimensional standard Gaussian measure on $\R^d$. It is well known (see for
	example~\cite{b-g-l}), that $\gamma_d$ is the invariant measure of the Ornstein-Uhlenbeck generator, defined
	for any test function $\phi$ by 
	\begin{equation}\label{O-U} \LL \phi(x)
	=\Delta\phi-\sum_{i=1}^{d}x_i \partial_i \phi(x).
	\end{equation} Its spectrum is given by $- \N_0$ and the eigenspaces are of the
	form 
	\begin{equation*} \textbf{Ker}(\LL+k\Id)=
	\left\{\sum_{i_1+i_2+\cdots+i_{d}=k}\alpha(i_1,\cdots,i_{d})\prod_{j=1}^{d}
	H_{i_j}(x_j)\right\},
	\end{equation*} where $H_n$ denotes the Hermite polynomial of order 
	$n$. Since, eigenfunctions of $\LL$ are multivariate polynomials so it is straightforward to see that assumption (f) is also verified.\\ 
	
	{ \item[(b)]\textbf{Wiener space and isonormal processes:} Letting $d\to \infty$ in the setup of the previous item (a) one recovers the infinite dimensional generator of the Ornstein-Uhlenbeck semigroup for isonormal processes, as defined in Section \ref{ss:isonormal}. It is easily verified in particular, by using \eqref{multiplication} that $(\Omega, \mathscr{F}, \LL)$ is also a diffusive fourth moment structure.
	
	}
	\item[(c)]\textbf{Laguerre Structure:}	Let $\nu \geq -1$, and $ \pi_{1,\nu}(dx) =
	x^{\nu-1}\frac{\me^{-x}}{\Gamma(\nu)} \textbf{1}_{(0,\infty)} \ud x$ be the Gamma
	distribution with parameter $\nu$ on $\R_+$. The associated Laguerre generator is
	defined for any test function $\phi$ (in dimension one) by: 
	\begin{equation}\label{Lag1} \LL_{1,\nu} (\phi)= x\phi''(x)+(\nu+1-x)\phi'(x).
	\end{equation}
	By a classical tensorization procedure, we obtain the Laguerre generator in
	dimension $d$ associated with the measure $ \pi_{d,\nu}(\ud x) = \pi_{1,\nu}(\ud x_1) 
	\pi_{1,\nu}(\ud x_2) \cdots \pi_{1,\nu}(\ud x_d)$, where $x=(x_1,x_2, \cdots,x_d)$.
	\begin{equation}\label{Lag2} \LL_{d,\nu}(\phi) = \sum_{i=1}^{d}
	\Big{(}x_{i} \partial_{i,i}\phi+(\nu+1-x_i)\partial_i \phi\Big{)}
	\end{equation}
	
	It is also classical that (see for example~\cite{b-g-l}) the spectrum of
	$\LL_{d,\nu}$ is given by~$-\N_0$ and moreover that
	\begin{equation} \textbf{Ker}(\LL_{d,p} + k\Id) =
	\left\{\sum_{i_1+i_2+\cdots+i_{d}=k} \alpha(i_1,\cdots,i_{d})\prod_{j=1}^{d}
	L^{(\nu)}_{i_j}(x_j)\right\},
	\end{equation}
	where $L^{(\nu)}_n$ stands for the Laguerre polynomial of order $n$ with parameter $\nu$ which is defined by
	$$ L_n^{(\nu)}(x)= {x^{-\nu} e^x \over n!}{d^n \over dx^n} \left(e^{-x} x^{n+\nu}\right).$$
	\end{itemize}}
\end{Ex}

{In the next subsection, we demonstrate how a diffusive fourth moment structure can be combined with the tools of $\Gamma$ calculus, in order to deduce substantial generalizations of Theorem \ref{t:gb}.}

\subsection{Connection with $\Gamma$ Calculus}
Throughout this section, we assume that $(E,\mu,\LL)$ is a diffiusive fourth moment structure. Our principal aim is to prove an analogous fourth moment criterion to that of \eqref{eq:4mt-bound} for eigenfunctions of the operator $\LL$. To do this, we assume that $X \in \K (\LL + q \Id)$ for some $q\ge 1$ with $\E[X^2]=1$. {The arguments implemented in the proof will clearly demonstrate that requirements (d) and (f) in Definition \ref{d:dfms} are the most crucial elements in order to establish our estimates.}

\begin{prop}\label{prop:variance-estimate}
Let $q \ge 1$. Assume that $X \in \K (\LL + q \Id)$ with $\E[X^2]=1$. Then,
\begin{equation*}
\Var \left(\Gamma[X]\right) \le \frac{q^2}{3} \left\{ \E[X^4] - 3\right\}.
\end{equation*}
\end{prop}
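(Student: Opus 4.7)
The plan is to expand $\Gamma[X]$ as a finite linear combination of orthogonal eigenfunctions of $-\LL$, and then to compare the variance (a quadratic form in the $L^2$-norms of these components) against a covariance with $X^2$ (a \emph{linear} form in the same quantities), which I can independently evaluate to $\tfrac{q}{3}(\E[X^4]-3)$.

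First, applying the diffusion property \eqref{diff} with $\phi(x)=x^2$ and using $\LL X=-qX$ yields $\Gamma[X] = qX^2 + \tfrac{1}{2}\LL[X^2]$. By property (f), the product $X^2$ decomposes as $X^2 = 1 + \sum_{i=1}^{2q}Y_i$ with $Y_i\in\K(\LL+i\Id)$ for $i\geq 1$; the constant projection is exactly $1$, because eigenfunctions of $-\LL$ with nonzero eigenvalue are centred (integration by parts of $\LL Y_i$ against the constant $1$) and $\E[X^2]=1$. Substituting gives
$$\Gamma[X] - q \;=\; \sum_{i=1}^{2q}\bigl(q-\tfrac{i}{2}\bigr) Y_i,$$
and since distinct eigenspaces are $L^2(\mu)$-orthogonal,
$$\Var(\Gamma[X]) \;=\; \sum_{i=1}^{2q}\bigl(q-\tfrac{i}{2}\bigr)^2 \E[Y_i^2].$$

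Second, I compute $\Cov(X^2,\Gamma[X])$ in two different ways. The integration by parts formula \eqref{by-parts} applied with $\phi'(x)=x^2$, together with the chain rule $\Gamma[\phi(X),X]=\phi'(X)\Gamma[X]$, gives $\E[X^2\Gamma[X]] = -\E[\tfrac{X^3}{3}\LL X] = \tfrac{q}{3}\E[X^4]$, hence $\Cov(X^2,\Gamma[X]) = \tfrac{q}{3}(\E[X^4]-3)$. Expressing the same covariance through the eigen-decompositions of $X^2-1$ and $\Gamma[X]-q$ and using orthogonality once more yields
$$\Cov(X^2,\Gamma[X]) \;=\; \sum_{i=1}^{2q}\bigl(q-\tfrac{i}{2}\bigr)\E[Y_i^2].$$

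The proof then closes on the elementary observation that $q - i/2 \in [0,q]$ for every $i\in\{1,\dots,2q\}$, so $(q-i/2)^2 \le q(q-i/2)$; summing this pointwise coefficient inequality against the nonnegative weights $\E[Y_i^2]$ and invoking the two identities above delivers
$$\Var(\Gamma[X]) \;\le\; q\sum_{i=1}^{2q}\bigl(q-\tfrac{i}{2}\bigr)\E[Y_i^2] \;=\; \tfrac{q^2}{3}\bigl(\E[X^4]-3\bigr),$$
as required. I expect the only point requiring mild care to be the integrability underlying the manipulations (in particular $X \in L^4(\mu)$ and $\Gamma[X] \in L^2(\mu)$, so that the covariance makes sense and $X^3 \LL X$ is integrable); both follow immediately from the hypercontractive estimate \eqref{Hyper2} applied to $X\in\K(\LL+q\Id)$ and to $X^2\in\bigoplus_{i\le 2q}\K(\LL+i\Id)$.
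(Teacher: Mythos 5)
Your proof is correct and is essentially the paper's own argument: both rest on the identity $\Gamma[X]-q=\tfrac12(\LL+2q\Id)(X^2-1)$, the decomposition of $X^2-1$ over $\bigoplus_{1\le i\le 2q}\K(\LL+i\Id)$, and the evaluation of $\E[(X^2-1)\Gamma[X]]=\tfrac{q}{3}(\E[X^4]-3)$ via the chain rule and integration by parts. Your pointwise coefficient inequality $(q-\tfrac{i}{2})^2\le q(q-\tfrac{i}{2})$ is precisely the paper's step of discarding the nonpositive term $\tfrac14\sum_{i}(-i)(2q-i)\,\E[(J_i(X^2-1))^2]$, so the two proofs coincide up to presentation.
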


\begin{proof}
First note that by using integration by parts formula \eqref{by-parts}, we have $\E[\Gamma[X]] = -\E[X \LL X] = q \E[X^2]=q$. Secondly, by using the definition of the 
carr\'e-du-champ operator $\Gamma$ and the fact that $\LL X = -q X$, one easily verifies that $$\Gamma[X] - q = \frac{1}{2} \left(  \LL + 2q \Id \right) (X^2-1).$$ Next, taking into account properties (f) and (g) we can conclude that $$X^2-1 \in \bigoplus_{ 1 \le i \le 2q} \K \left( \LL + i \Id \right).$$ For the rest of the proof, we use the notation $J_i$ to denote the projection of a square-integrable element $X$ over the eigenspace $\K \left( \LL + i \Id\right)$. Now,
\begin{equation*}
\begin{split}
\Var \left( \Gamma[X]\right) &= \E \left[\left( \Gamma[X]  - q \right)^2\right] = \frac{1}{4} \E \left[  \left(  \LL + 2q \Id \right) (X^2-1) \times \left(  \LL + 2q \Id \right) (X^2-1) \right]\\
&= \frac{1}{4} \E \left[   \LL (X^2-1) \left(  \LL + 2q \Id \right) (X^2-1) \right]+ \frac{q}{2} \E \left[ (X^2-1) \left(  \LL + 2q \Id \right) (X^2-1) \right]\\
&= \frac{1}{4} \sum_{1 \le i \le 2q} (-i)(2q-i) \E \left[ \left(J_i (X^2-1)   \right)^2\right]+ \frac{q}{2} \E \left[ (X^2-1) \left(  \LL + 2q \Id \right) (X^2-1) \right]\\
&\le  \frac{q}{2} \E \left[ (X^2-1) \left(  \LL + 2q \Id \right) (X^2-1) \right] \\
& =  q \E \left[ (X^2-1) (\Gamma[X] - q)\right] = q \E \left[ (X^2-1) \Gamma[X] \right]\\
& = q \E \left[ \Gamma [\frac{X^3}{3} - X,X] \right]= - q \E \left[ \left( \frac{X^3}{3} - X \right) \LL X \right]\\
&= q^2 \E\left[X \left(\frac{X^3}{3} - X \right)\right] = q^2 \E\left[ \frac{X^4}{3} - X^2\right]\\
&= \frac{q^2}{3} \left\{ \E[X^4] -3\right\},
	\end{split}.
\end{equation*}    
thus yielding the desired conclusion.
\end{proof}

In order to avoid some technicalities, we now present a quantitative bound in the 1-Wasserstein distance $d_W$ (and not in the more challenging total variation distance $d_{TV}$) for eigenfunctions of the operator $\LL$. This requires to adapt the Stein's method machinery presented in Section \ref{sec:stein} to our setting, as a direct application of the integration by part formula \eqref{by-parts}. The arguments below are borrowed in particular from \cite[Proposition 1]{ledoux4MT}.  

\begin{prop}\label{prop:general-setting}
Let $(E,\mu,\LL)$ be a diffiusive fourth moment structure. Assume that $X \in \K (\LL + q \Id)$ for some $q \ge 1$ with $\E[X^2]=1$. Let $N \sim \mathscr{N}(0,1)$. Then, 

\begin{equation*}
d_{W}(X, N)	\le \frac{2}{q}  \Var \left(\Gamma[X]\right)^\frac{1}{2}.
\end{equation*}
\end{prop}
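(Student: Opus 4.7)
The plan is to combine the Stein bound from Lemma \ref{lem:stein}(d) with the integration by parts formula \eqref{by-parts}, exploiting the fact that $X$ is an eigenfunction of $-\LL$. Specifically, for any admissible test function $f$ in the Wasserstein Stein bound, the eigenvalue relation $\LL X = -q X$ allows us to replace $X$ by $-\frac{1}{q}\LL X$ inside the expectation $\E[X f(X)]$, after which \eqref{by-parts} converts the action of $\LL$ into the carr\'e-du-champ $\Gamma[X]$.

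More precisely, I would proceed as follows. First, by Lemma \ref{lem:stein}(d), it suffices to bound $\big|\E[f(X)X - f'(X)]\big|$ uniformly over $C^1$ functions $f$ with $\|f'\|_\infty \leq 2$ (and $f'\in\mathrm{Lip}(2)$). Second, writing $\E[X f(X)] = -\tfrac{1}{q}\E[f(X)\LL X]$ and invoking \eqref{by-parts} with $Y=X$ and $\phi=f$, one obtains
\begin{equation*}
\E[X f(X)] \;=\; \frac{1}{q}\,\E\!\left[f'(X)\,\Gamma[X]\right].
\end{equation*}
Third, since $\E[\Gamma[X]] = -\E[X\LL X] = q\,\E[X^2] = q$ (as already computed in the proof of Proposition \ref{prop:variance-estimate}), we may rewrite
\begin{equation*}
\E[X f(X) - f'(X)] \;=\; \frac{1}{q}\,\E\!\left[f'(X)\bigl(\Gamma[X]-q\bigr)\right].
\end{equation*}
Fourth, using $\|f'\|_\infty \le 2$ and then Cauchy--Schwarz,
\begin{equation*}
\bigl|\E[X f(X) - f'(X)]\bigr| \;\le\; \frac{2}{q}\,\E\bigl|\Gamma[X]-q\bigr|\;\le\;\frac{2}{q}\sqrt{\Var(\Gamma[X])},
\end{equation*}
and taking the supremum over $f$ yields the desired estimate.

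The main obstacle I anticipate is not any computation but a justification issue: the integration by parts formula \eqref{by-parts} is stated for $\phi\in \mathcal{C}^2_b$, whereas the Stein test functions in Lemma \ref{lem:stein}(d) are only $C^1$ with $f'\in\mathrm{Lip}(2)$. One therefore needs an approximation argument: mollify $f$ into a sequence $f_n \in \mathcal{C}^2_b$ with uniformly bounded $\|f_n'\|_\infty$ and $f_n, f_n' \to f, f'$ locally uniformly, apply \eqref{by-parts} for each $n$, and pass to the limit. The passage to the limit is controlled by the hypercontractivity property \eqref{Hyper2}, which provides the integrability of $\Gamma[X]$ (noting $\Gamma[X]-q\in \bigoplus_{i\le 2q}\K(\LL+i\Id)$, as established in the proof of Proposition \ref{prop:variance-estimate}), so that dominated convergence applies. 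Once this technical point is settled, the chain of identities above gives the statement directly.
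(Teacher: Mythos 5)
Your proposal is correct and follows essentially the same route as the paper: reduce to the Stein bound of Lemma \ref{lem:stein}, use the eigenfunction relation $\LL X=-qX$ together with the integration by parts formula \eqref{by-parts} and the chain rule to arrive at $\frac{1}{q}\E\left[f'(X)\left(q-\Gamma[X]\right)\right]$, and conclude by Cauchy--Schwarz. Your additional remark on the regularity mismatch between the $C^1$ Stein solutions and the $\mathcal{C}^2_b$ hypothesis in \eqref{by-parts} is a sensible technical point that the paper's proof passes over silently.
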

\begin{proof}
For every function $f$ of class $C^1$ on $\R$, with $\Vert f' \Vert_\infty \le 1$ and $f'\in {\rm Lip}(2)$ according to Part (b) in Lemma \ref{lem:stein}, it is enough to show that 
$$\Big \vert   \E \left[  f'(X) - X f(X) \right] \Big \vert 	\le \frac{2}{q}  \Var \left(\Gamma[X]\right)^\frac{1}{2}.$$ Since $\LL X = -q X$, and diffusivity of the operator $\Gamma$ together with integration by parts formula \ref{by-parts}, one can write that 
\begin{equation*}
\begin{split}
  \E \left[  f'(X) - X f(X) \right] &=  \E \left[  f'(X) + \frac{1}{q} \LL(X) f(X) \right]=  \E \left[  f'(X) - \frac{1}{q} \Gamma[f(X),X] \right]\\
  &=  \E \left[  f'(X) - \frac{1}{q} f'(X) \Gamma[X] \right]\\
  & = \frac{1}{q} \E \left[  f'(X)  \left(    q - \Gamma[X] \right)  \right].
  \end{split}
  \end{equation*}
  Now, the claim follows at once by using the Cauchy-Schwarz inequality and noting that $\E[\Gamma[X]] = q \, \E[X^2] = q$.
\end{proof}
We end this section with the following general version of the fourth moment theorem for eigenfunctions of the operator $\LL$, obtained by combining Propositions \ref{prop:variance-estimate} and \ref{prop:general-setting}.

\begin{Thm}\label{Thm:4mt-general}
	Let $(E,\mu,\LL)$ be a diffiusive fourth moment structure. Assume that $X \in \K (\LL + q \Id)$ for some $q \ge 1$ with $\E[X^2]=1$. Let $N \sim \mathscr{N}(0,1)$. Then, 
	$$ d_{W}(X, N) \le \frac{2}{\sqrt{3}} \, \sqrt{ \E[X^4] - 3 }.$$
It follows that, if $\{ X_n \}_{n \ge 1}$ is a sequence of eigenfunctions in a fixed eigenspace $ \K (\LL + q \Id)$ where $q \ge 1$ and $\E[X^2_n]=1$ for all $n \ge 1$, then the following implication holds: $\E[X^4_n] \to 3$ if and only if $X_n$ converges in distribution towards the standard Gaussian random variable $N$.  
	\end{Thm}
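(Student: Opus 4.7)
My plan is to obtain the bound as a one-line consequence of the two preceding propositions, and then address the equivalence by using weak convergence in one direction and the hypercontractivity property of the eigenspaces in the other.

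First, for the quantitative bound, I would simply combine the estimates. Proposition \ref{prop:general-setting} gives
$$d_W(X,N) \le \frac{2}{q}\sqrt{\Var(\Gamma[X])},$$
while Proposition \ref{prop:variance-estimate} gives
$$\Var(\Gamma[X]) \le \frac{q^2}{3}\bigl(\E[X^4]-3\bigr).$$
Substituting the second into the first, the factors of $q$ cancel and I obtain
$$d_W(X,N) \le \frac{2}{q}\cdot \frac{q}{\sqrt{3}}\,\sqrt{\E[X^4]-3} \;=\; \frac{2}{\sqrt{3}}\sqrt{\E[X^4]-3},$$
which is exactly the announced inequality.

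For the equivalence, the implication $\E[X_n^4]\to 3\Rightarrow X_n\xrightarrow{d} N$ follows at once from the bound just established, since $d_W$ metrizes a topology stronger than convergence in distribution (as recalled in Section \ref{sec:stein}). The converse implication $X_n\xrightarrow{d} N\Rightarrow \E[X_n^4]\to 3$ is the step that requires a separate argument, and it is the only real subtlety here: convergence in distribution does not generally imply convergence of fourth moments. The key is the hypercontractivity property stated in item (4) of the Remark following Definition \ref{d:dfms}: since each $X_n$ lies in the fixed eigenspace $\K(\LL+q\Id)\subset \bigoplus_{i\le q}\K(\LL+i\Id)$, there is a constant $C(q,k)$ such that
$$\E[X_n^{2k}] \le C(q,k)\,\E[X_n^2]^k = C(q,k)$$
for every $k\ge 1$ and every $n$. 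Taking $k=3$ (say) yields uniform boundedness of $\E[X_n^6]$, hence uniform integrability of $\{X_n^4\}_{n\ge 1}$, and combined with $X_n\xrightarrow{d}N$ this forces $\E[X_n^4]\to \E[N^4]=3$.

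The main obstacle, as indicated above, is the reverse implication in the equivalence; the quantitative bound is essentially a bookkeeping exercise once Propositions \ref{prop:variance-estimate} and \ref{prop:general-setting} are in hand, but the moment convergence in the converse direction is not formal and depends crucially on the diffusive structure through hypercontractivity of the eigenspaces. Everything else — in particular the use of $\textbf{sp}(-\LL)=\N$, diffusivity, and property (f) of Definition \ref{d:dfms} — has already been absorbed into the statements of the two propositions being combined.
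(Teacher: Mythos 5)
Your proof is correct and follows exactly the paper's route: the bound is obtained by chaining Proposition \ref{prop:general-setting} with Proposition \ref{prop:variance-estimate} (the factors of $q$ cancelling as you compute), the forward implication comes from the bound plus the fact that $d_W$ dominates convergence in distribution, and the converse is handled via the hypercontractive estimate \eqref{Hyper2} giving uniform integrability of $\{X_n^4\}$ — precisely the argument the paper sketches in the remark following the theorem.
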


\begin{Rem}{\rm
The fact that the condition $\E[X^4_n] \to 3$ is necessary for convergence to Gaussian is a direct consequence of the hypercontractive estimate \eqref{Hyper2}.
}
\end{Rem}

\subsection{Transport distances, Stein discrepancy and $\Gamma$ Calculus}\label{sec:TD-SD-GC}
The general setting of the Markov triple together with $\Gamma$ calculus provide a suitable framework to study \textbf{functional inequalities} such as the classical \textbf{logarithmic Sobolev inequality} or the celebrated \textbf{Talagrand quadratic transportation cost inequality}. For simplicity, here we restrict ourself to the setting of Wiener structure and the Gaussian measure to be our reference measure. The reader may consult references \cite{l-n-p-15,l-n-p-16} for a presentation of the general setting, and \cite{Entropy, n-p-s} for some previous references connecting fourth moment theorems and entropic estimates.\\ 
  
Let $d \ge 1$, and $ d\gamma(x) = (2\pi)^{- \frac{d}{2}} e^{- \frac{\vert x \vert }{2}} dx$ be the standard Gaussian measure on $\R^d$. Assume that $d\nu = h d\gamma$ is a probability measure on $\R^d$ with a (smooth) density function $h : \R^d \to \R_+$ with respect to the Gaussian measure $\gamma$. Inspired from Gaussian integration by parts formula we introduce first the crucial notion of a \textbf{Stein kernel} $\tau_\nu$ associated with the probability measure $\nu$ and, then, the concept of \textbf{Stein discrepancy}.

\begin{Def}{\rm
	(a) A measurable matrix-valued map $\tau_\nu$ on $\R^d$ is called a {\bf Stein kernel} for the centered probability measure $\nu$ if for every smooth test function $\varphi : \R^d \to \R$,
	$$ \int_{\R^d} x \cdot \nabla \varphi d\nu = \int_{\R^d} \langle \tau_\nu,\Hess (\varphi) \rangle_{\HS} d\nu,$$
	where $\Hess(\varphi)$ stands for the Hessian of $\varphi$, and $\langle \, {,} \, \rangle_{\HS}$, and $\Vert \, {,} \,  \Vert_{\HS}$ denote the usual Hilbert-Schmidt scalar product and norm, respectively. \\
	(b) The {\bf Stein discrepancy} of $\nu$ with respect to $\gamma$ is defined as $$\SSS(\nu,\gamma) = \inf \Big( \int_{\R^d} \Vert \tau_\nu - \Id \Vert_{\HS}^2 d\nu     \Big)^\frac{1}{2}$$
	where the infimum is taken over all Stein kernels of $\nu$, and takes the value $+\infty$ if a Stein kernel for $\nu$ does not exist. 
} 
\end{Def}
We recall that the Stein kernel $\tau_\nu$ is uniquely defined in dimension $d=1$, and that unicity may fail in higher dimensions $d\geq 2$, see \cite[Appendix A]{n-p-s}. Also, $\tau_\gamma = \Id_{d \times d}$ the identity matrix. We further refer to \cite{Fathi,c-f-p} for existence of the Stein kernel in general settings.  The interest of the Stein's discrepancy comes e.g. from the fact that -- as a simple application of Stein's method --
\begin{equation*}
d_{TV} (\nu,\gamma) \le 2 \int_\R \vert \tau_\nu - 1 \vert d\nu \le 2 \Big(  \int_\R \vert \tau_\nu - 1 \vert^2 d\nu  \Big)^\frac{1}{2},
\end{equation*} 
yielding that $d_{TV}(\nu,\gamma) \le 2 \SSS(\nu,\gamma)$; see \cite{l-n-p-15} for further details.

\medskip

Next, we need the notion of Wasserstein distance. Let $p \ge 1$. Given two probability measure $\nu$ and $\mu$ on the Borel sets of $\R^d$, whose marginals have finite moments of order $p$, we define the \textbf{$p$-Wasserstein distance} between $\nu$ and $\mu$ as 
\begin{equation*}
\W_p (\nu,\mu) = \inf_{\pi} \Big(  \int_{\R^d \times \R^d} \vert x-y \vert^p d\pi(x,y) \Big)^\frac{1}{p}
\end{equation*}
where the infimum is taken over all probability measures $\pi$ of $\R^d \times \R^d$ with marginals $\nu$ and $\mu$; note that $ {\rm W}_1= d_W$, as defined in Section 2. 

\medskip

For a measure $\nu=h \gamma$ with a smooth density function $h$ on $\R^d$, we recall that $$\HHH(\nu,\gamma):= \int_{\R^d} h \log h d\gamma = \Ent_\gamma(h)$$ is the \textbf{relative entropy} of the measure $\nu$ with respect to $\gamma$, and $$\III(\nu,\gamma):= \int_{\R^d} \frac{\vert \nabla h\vert^2}{h} d\gamma$$ is the \textbf{Fisher information} of $\nu$ with respect to $\gamma$. After having established these notions, we can state two popular probabilistic/Entropic functional inequalities :\\
\begin{enumerate}
\item [\rm (i)][\textbf{Logarithmic Sobolev inequality}]: $ \qquad  \HHH(\nu,\gamma) \le \frac{1}{2} \III(\nu,\gamma)$.
\item[\rm (ii)] [\textbf{Talagrand quadratic transportation cost inequality}]: $ \qquad \W^2_2(\nu,\gamma) \le 2 \HHH (\nu,\gamma)$.
\end{enumerate}

The next theorem is borrowed from \cite{l-n-p-15}, and represents a significant improvement of the previous logarithmic Sobolev and Talagrand inequalities based on the use of Stein discrepancies: the techniques used in the proof are based on an interpolation argument along the Ornstein-Uhlenbeck semigroup. The reader is also referred to the recent works \cite{Fathi,c-f-p,saumard} for related estimates of the Stein discrepancy based on the use of \textbf{Poincar\'e inequalities}, as well as on {\bf optimal transport techniques}.  See \cite{Bonis}  for a further amplification of the approach of \cite{l-n-p-15}, with applications to the quantitative multidimensional CLT in the 2-Wasserstein distance. See also \cite{D20}.

\begin{Thm}\label{Thm:FI}
Let	$d\nu = h d \gamma$ be a centered probability measure on $\R^d$ with smooth density function $h$ with respect to the standard Gaussian measure $\gamma$.
	\begin{enumerate}
\item[\rm (1)] Then the following {\bf Gaussian $\HSI$ inequality} holds: 
$$ \HHH(\nu,\gamma) \le \frac{1}{2} \SSS^2(\nu,\gamma) \log \Big(   1+ \frac{\III(\nu,\gamma)}{\SSS^2(\nu,\gamma)} \Big).$$
\item[\rm (2)] Assume further that  $   \SSS(\nu,\gamma)    $ and $\HHH(\nu,\gamma )$ are both positive and finite. Then, the following {\bf Gaussian $\WSH$ inequality} holds: 
$$ \W_2 (\nu,\gamma) \le \SSS(\nu,\gamma) \arccos \left(  e^{- \frac{\HHH(\nu,\gamma)}{\SSS^2(\nu,\gamma)}}  \right).$$
	\end{enumerate}
\end{Thm}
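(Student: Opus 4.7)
The plan is to prove both inequalities via an interpolation argument along the Ornstein-Uhlenbeck semigroup $(P_t)_{t\geq 0}$ acting on $\R^d$, and along the associated flow of measures $\nu_t := P_t^* \nu$, which starts at $\nu$ and converges exponentially to $\gamma$. The guiding observation is that one can use the Stein kernel to control the \emph{initial} part of this flow more sharply than the classical Fisher-information contraction allows, and that this refinement is exactly what produces the logarithmic (resp.\ inverse cosine) improvement over LSI (resp.\ Talagrand).

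For part (1), I would start from the de Bruijn identity, obtained by differentiating $t \mapsto \HHH(\nu_t,\gamma)$ along the OU flow:
$$\HHH(\nu,\gamma) = \int_0^\infty \III(\nu_t,\gamma)\, dt.$$
The standard contraction estimate $\III(\nu_t,\gamma) \leq e^{-2t}\,\III(\nu,\gamma)$, once integrated, reproduces only the classical LSI $\HHH \le \tfrac{1}{2}\III$. The key step is therefore to prove, using the Mehler representation $P_t f(x) = \int f(e^{-t} x + \sqrt{1-e^{-2t}}\, y)\, d\gamma(y)$ together with the defining identity of the Stein kernel $\tau_\nu$, a second bound of the form
$$\III(\nu_t,\gamma) \;\leq\; \frac{e^{-2t}}{1-e^{-2t}}\;\SSS^2(\nu,\gamma),$$
valid for $t>0$. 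Taking the minimum of the two bounds and splitting the integral at the unique $t^*$ where they coincide produces exactly $\tfrac{1}{2}\SSS^2(\nu,\gamma)\log\bigl(1+\III(\nu,\gamma)/\SSS^2(\nu,\gamma)\bigr)$; this is a routine one-variable computation once the Stein-kernel bound is in place.

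For part (2), I would follow a parallel interpolation scheme, this time for the transport functional $t \mapsto \W_2(\nu_t,\gamma)$. Using the Benamou--Brenier / Otto calculus one has $\bigl|\tfrac{d}{dt}\W_2(\nu_t,\gamma)\bigr| \leq \sqrt{\III(\nu_t,\gamma)}$, and combining this with the same Stein-kernel bound as above yields a closed differential inequality relating $\W_2(\nu_t,\gamma)$, $\SSS(\nu,\gamma)$, and $\HHH(\nu_t,\gamma)$. A natural trigonometric substitution (motivated by the fact that, under the Stein-kernel bound, $\sqrt{\III}\,dt$ behaves like an arc-length element with exponential time rescaling) converts the inequality into one that integrates to an $\arccos$ expression. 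Matching the boundary data $\W_2(\nu_\infty,\gamma)=0$ and using the identity $\HHH(\nu,\gamma) = \int_0^\infty \III(\nu_t,\gamma)\,dt$ from the proof of (1) produces the required bound $\W_2(\nu,\gamma) \le \SSS(\nu,\gamma)\arccos\bigl(e^{-\HHH(\nu,\gamma)/\SSS^2(\nu,\gamma)}\bigr)$.

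The main obstacle is the Stein-kernel-enhanced Fisher information estimate along the OU semigroup. Unlike the classical contraction, which follows from a one-line commutation $\nabla P_t = e^{-t} P_t \nabla$, the Stein-kernel bound requires writing $\nabla \log h_t$, where $h_t$ is the density of $\nu_t$ with respect to $\gamma$, as a conditional expectation against $\tau_\nu - \Id$ via the Mehler formula, and then applying a Cauchy--Schwarz / Jensen estimate that produces the factor $(1-e^{-2t})^{-1}$. This is the step that intimately couples the Malliavin/Stein identity with the semigroup interpolation, and it is the only genuinely non-routine ingredient; the remainder of the argument is calculus and optimization.
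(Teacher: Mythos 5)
You should first note that the survey itself does not prove Theorem \ref{Thm:FI}: it quotes it from \cite{l-n-p-15} and only names the technique (``interpolation along the Ornstein--Uhlenbeck semigroup''), so the relevant comparison is with the proof in that reference. Your strategy — de Bruijn's identity $\HHH(\nu,\gamma)=\int_0^\infty \III(\nu_t,\gamma)\,dt$, the classical contraction $\III(\nu_t,\gamma)\le e^{-2t}\III(\nu,\gamma)$, a Stein-kernel-enhanced bound on $\III(\nu_t,\gamma)$ valid for $t>0$, and a split of the integral at the crossing time — is exactly the strategy of \cite{l-n-p-15}. However, your key lemma carries the wrong exponent, and this is not cosmetic. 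The correct bound is
$$\III(\nu_t,\gamma)\;\le\;\frac{e^{-4t}}{1-e^{-2t}}\,\SSS^2(\nu,\gamma),\qquad t>0,$$
with $e^{-4t}$ rather than $e^{-2t}$: writing $X_t=e^{-t}X+\sqrt{1-e^{-2t}}\,Z$ with $X\sim\nu$, $Z\sim\gamma$ independent, the score of $\nu_t$ is represented as
$$\nabla\log h_t(X_t)=\frac{e^{-2t}}{\sqrt{1-e^{-2t}}}\,\E\bigl[(\tau_\nu(X)-\Id)Z\mid X_t\bigr],$$
where one factor $e^{-t}$ comes from the chain rule in the Mehler representation and a second factor $e^{-t}$ comes from applying the Stein identity to the map $x\mapsto\E_Z[\phi(e^{-t}x+\sqrt{1-e^{-2t}}Z)]$; Jensen and $\E_Z|AZ|^2=\lVert A\rVert_{\HS}^2$ then give the $e^{-4t}$. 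With the correct lemma, setting $u_0=e^{-2t_0}$ one gets $\HHH\le\frac{\III}{2}(1-u_0)+\frac{\SSS^2}{2}\bigl(-u_0-\log(1-u_0)\bigr)$, minimized at $u_0=\III/(\III+\SSS^2)$ with value exactly $\tfrac12\SSS^2\log(1+\III/\SSS^2)$. With your version the second integral is $-\tfrac{\SSS^2}{2}\log(1-u_0)$ and the optimization returns $\tfrac12\SSS^2\bigl(1+\log(\III/\SSS^2)\bigr)$ (for $\III\ge\SSS^2$), which is strictly weaker than the stated $\HSI$ inequality; so the claim that your bound ``produces exactly'' the result fails.

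For part (2) the same exponent is essential: it is $\int_0^\infty e^{-2t}(1-e^{-2t})^{-1/2}\,dt=1$ that makes the crude estimate $\W_2(\nu,\gamma)\le\SSS(\nu,\gamma)$ finite and feeds the subsequent optimization against the constraint $\int_0^\infty\III(\nu_t,\gamma)\,dt=\HHH(\nu,\gamma)$. Beyond that, your treatment of (2) is only a gesture: the differential inequality you would integrate is never written down, and the appearance of $\arccos$ in \cite{l-n-p-15} is the outcome of a genuinely careful extremal argument combining the pointwise Stein bound with the total-entropy constraint, not merely ``a natural trigonometric substitution'' applied to $\bigl|\frac{d}{dt}\W_2(\nu_t,\gamma)\bigr|\le\sqrt{\III(\nu_t,\gamma)}$. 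As it stands, part (2) of your proposal is an unverified plan, and part (1) rests on a quantitatively incorrect lemma; both are repairable, but the repair (the $e^{-4t}$ representation of the score via the Stein kernel) is precisely the non-routine ingredient you identified and then mis-stated.
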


 The next subsection deals with the challenging problem of quantitative probabilistic approximations in infinite dimension.

{
\subsection{Functional approximations and Dirichlet structures}

Although Stein's method is already successfully used for quantifying functional limit theorems of the Donsker-type (see \cite{Barbour-PTRF, BJ09}, as well as \cite{DK21, DKP19, K20, Shih} for a discussion of recent developments), the general problem of assessing the discrepancy between probability distributions on infinite-dimensional spaces (like e.g. on classes of smooth functions or on the Skorohod space) is essentially open. 

 In the last years a new direction of research has emerged, where the ideas behind the Malliavin-Stein approach are applied in the framework of {\bf Dirichlet structures}, in order to deal with quantitative estimates on the probabilistic approximation of Hilbert space-valued random variables. A general (and impressive!) contribution on the matter is the recent work by Bourguin and Campese \cite{BC20}, where the authors are able to retrieve several Hilbert space counterparts of the finite-dimensional results discussed in Section \ref{s:malliavin} above. Bourguin and Campese's approach (whose discussion requires preliminaries that go beyond the scope of our survey) represents a substantial addition to a line of investigation intiated by L. Coutin and L. Decreusefond in the seminal works  \cite{CD13, Laurent-Donsker-Wass,Laurent-Higher-Order-Stein,Laurent-stein-roughPath,Laurant-Stein-Dirichlet-Malliavin-method}.

 As a quick illustration, we conclude the section with two representative statements, taken from \cite{CD13, Laurant-Stein-Dirichlet-Malliavin-method} and \cite{Laurent-Donsker-Wass}, respectively.

\begin{Thm}[See \cite{CD13} and Section 3.2 in \cite{Laurant-Stein-Dirichlet-Malliavin-method}]
Let $(N_\lambda (t)  :  t \ge 0)$ be a Poisson process with intensity $\lambda$.  Then,  as $\lambda \to \infty$,  $$  \left( \frac{N_\lambda(t) - \lambda t}{\sqrt{\lambda}} :  t\ge 0 \right)  \implies  \left(  B(t)  : t \ge 0 \right) $$
where the convergence takes place weakly in the Skorohkod space. Moreover, for every $\beta < \frac{1}{2}$  consider the so-called Besov-Liouville space $I_{\beta,2}$,

\begin{equation*}
I_{\beta,2}  = \Big\{     f   \, : \,  \exists \, \dot{f},  \, f(x) = \frac{1}{\Gamma(\beta)}  \int_{0}^{x} (x-t)^{\beta -1} \dot{f}(t)  dt            \Big \}.
\end{equation*}
Let $\mu_\beta$ denote the Wiener measure on the space $I_{\beta,2}$, and $Q_\lambda$ is the probability measure induced by $\left(  N_\lambda(t) : t \ge 0 \right)$ seen as canonical process is Poisson process with intensity $\lambda$.  Then, there exists a constant $c_\beta$ such that

\begin{equation*}
\sup_{\Vert F \Vert_{   C^2_b (I_{\beta,2},\R)  } \le 1}   \Big \vert  \int  F dQ_\lambda -  \int   F  d \mu_\beta \Big \vert   \le  \frac{c_\beta}{\sqrt{\lambda}}
\end{equation*}
where  $ C^2_b (I_{\beta,2},\R)$  is the set of twice Fr\'echet differentiable functionals on $I^{\beta,2}$.
 
\end{Thm}

The next result aims to provide a rate of convergence in Donsker theorem in Wasserstein distance. Let $\eta \in (0,1)$, $p \ge 1$. Define the fractional Sobolev space $W_{\eta,p}$ as the closure of the space $C^1$ w.r.t norm

\begin{equation*}
\Vert f\Vert^p_{\eta,p} := \int_{0}^{1}  \vert f (t) \vert^p dt + \int_{0}^{1} \int_{0}^{1}   \frac{    \vert f(t) - f(s) \vert^{p}}{  \vert    t -s \vert^{1+p\eta}} ds dt.  
\end{equation*}  

Also, for $n \ge 1$, define $\mathcal{A}^n=  \{  (k,j) \, :\,  1 \le k \le d,  \, 0 \le j \le n-1   \}$, and let 

\begin{equation*}
S^n = \sum_{(k,j)\in \mathcal{A}^n} X_{(k,j)} h^n_{(k,j)}, \quad  h^n_{(k,j)} (t) = \sqrt{n} \int_{0}^{t} \textbf{1}_{[j/n, (j+1)/n]}(s) ds \, e_{k}
\end{equation*}
where $(e_k) : 1 \le k \le d$ is the canonical basis of $\R^d$, and $(X_{(k,j)}, (k,j) \in \mathcal{A}^n)$ is a family of independent identically distributed, $\R^d$-valued, random variables with $\E[X]=0$, and $\E \Vert X \Vert^2_{\R^d}   =1$, where $X$ is a random variable which has their common distribution.

\begin{Thm}[See Section 3 in \cite{Laurent-Donsker-Wass}]
Let $W=W_{\eta,p}\left( [0,1], \R^d  \right)$, and $\mu_{\eta,p}$ be the law of the $d$-dimensional Brownian motion $B$ on the space $W$.  Then, there exists a constant $c$ such that for $X \in L^p  (W;\R^d,\mu_{\eta,p})$ with $p \ge 3$,

\begin{equation*}
\sup_{ F  \in  \text{Lip}_1 (W_{\eta,p})}  \Big \vert    \E[F(S^n)]  -  \E[F(B)]         \Big \vert  \le  c \, \Vert X\Vert ^p_{L^p}  \, n^{-\frac{1}{6} + \frac{\eta}{3}}  \ln n
\end{equation*}
where 
\begin{equation*}
\text{Lip}_1 (W_{\eta,p})   :  =  \Big \{       F:W_{\eta,p}  \to \R^d \, :  \,  \Vert F(x)  - F(y) \Vert_{\R^d}  \le \Vert  x -y \Vert_{\eta,p},  \, \forall x,y \in W_{\eta,p}           \Big \}.
\end{equation*}
\end{Thm}

}

\medskip

Further applications of Malliavin-Stein techniques in the framework of Dirichlet structures are contained in \cite{DST, DH}. The next section focuses on a discrete Markov structure for which exact fourth moment estimates are available.

\section{{Bounds on the Poisson space: fourth moments, second-order Poincar\'e estimates and two-scale stabilization}}

We will now describe a non-diffusive Markov triple for which a fourth moment result analogous to Proposition \ref{prop:general-setting} holds. Such a Markov triple is associated with the space of square-integrable functionals of a Poisson measure on a general pair $(Z, \mathscr{Z})$, where $Z$ is a Polish space and $\mathscr{Z}$ is the associated Borel $\sigma$-field. The requirement that $Z$ is Polish -- together with several other assumptions adopted in the present section -- is made in order to simplify the discussion; the reader is referred to \cite{dp, dvz} for statements and proofs in the most general setting. See also \cite{Lsurvey, LPbook} for an exhaustive presentation of tools of stochastic analysis for  functionals of Poisson processes, as well as \cite{PRbook} for a discussion of the relevance of variational techniques in the framework of modern stochastic geometry.

\subsection{Setup}

Let $\mu$ be a non-atomic $\sigma$-finite measure on $(Z, \mathscr{Z})$, and set $\mathscr{Z}_\mu:=\{B\in\mathscr{Z}\,:\,\mu(B)<\infty\}$. In what follows, we will denote by
\begin{equation*}
\eta=\{\eta(B)\,:\,B\in\mathscr{Z}\}
\end{equation*}
a \textbf{Poisson measure} on $(Z,\mathscr{Z})$ with \textbf{control} (or {\bf intensity}) $\mu$. This means that $\eta$ is a random field indexed by the elements of $\mathscr{Z}$, satisfying the following two properties: (i) for every finite collection $B_1,\dotsc,B_m\in\mathscr{Z}$ of pairwise disjoint sets, the random variables $\eta(B_1),\dotsc,\eta(B_m)$ are stochastically independent, and (ii) for every $B\in\mathscr{Z}$, the random variable $\eta(B)$ has the Poisson distribution with mean $\mu(B)$ \footnote{Here, we adopt the usual convention of identifying a Poisson random variable with mean zero (resp. with infinite mean) with an a.s. zero (resp. infinite) random variable}. Whenever $B\in\mathscr{Z}_\mu$, we also write 
$\hat{\eta}(B):=\eta(B)-\mu(B)$ and denote by 
\[\hat{\eta}=\{\hat{\eta}(B)\,:\,B\in\mathscr{Z}_\mu\}\]
the \textbf{compensated Poisson measure} associated with $\eta$. Throughout this section, we assume that $\mathscr{F} = \sigma(\eta)$.

\medskip

It is a well-known fact that one can regard  the Poisson measure $\eta$ as a random element {{} taking values} in the space $\mathbf{N}_\sigma=\mathbf{N}_\sigma({Z})$ of all $\sigma$-finite point measures $\chi$ on $({Z},\mathscr{Z})$ that satisfy $\chi(B)\in\N_0\cup\{+\infty\}$ for all $B\in\mathscr{Z}$. {{} Such a space is} equipped with the smallest 
$\sigma$-field $\mathscr{N}_\sigma:=\mathscr{N}_\sigma(Z)$ {{} such that}, for each $B\in\mathscr{Z}$,  the mapping $\mathbf{N}_\sigma\ni\chi\mapsto\chi(B)\in[0,+\infty]$ is measurable. {In view of our assumptions on $Z$ and following e.g. \cite[Section 6.1]{LPbook}, throughout the paper we can assume without loss of generality that $\eta$ is {\bf proper}, in the sense that $\eta$ can be $P$-a.s. represented in the form
\begin{equation}\label{e:proper}
\eta = \sum_{n=1}^{\eta({Z})}\delta_{X_n},
\end{equation}
where $\{X_n : n\geq 1\}$ is a countable collection of random elements with values in $\mathcal{Z}$ and where we write $\delta_z$ for the \textit{Dirac measure} at $z$. Since we assume $\mu$ to be non-atomic, one has that $X_k\neq X_n$ for every $k\neq n$, $P$-a.s..

\medskip

Now denote by $\mathbf{F}(\mathbf{N}_\sigma)$ the class of all measurable functions $\mathfrak{f}:\mathbf{N}_\sigma\rightarrow\R$ and by $\mathcal{L}^0(\Omega):=\mathcal{L}^0(\Omega,\mathscr{F})$ the class of real-valued, measurable functions $F$ on $\Omega$. 
Note that, as $\mathscr{F}=\sigma(\eta)$, each $F\in \mathcal{L}^0(\Omega)$ has the form $F=\mfk(\eta)$ for some measurable function $\mfk$. This $\mfk$, called a {\bf representative} of $F$, is $P_\eta$-a.s. uniquely defined, where $P_\eta=P\circ\eta^{-1}$ is the image measure of $P$ under $\eta$. Using a representative $\mfk$ of $F$, one can introduce the \textbf{add-one cost operator} $D^+=(D_z^+)_{z\in\mathcal{Z}}$ on $\mathcal{L}^0(\Omega)$ as follows: 
\begin{equation}\label{defdp}
 D_z^+F:=\mfk(\eta+\delta_z)-\mfk(\eta)\,,\quad z\in\mathcal{Z};
\end{equation}
similarly, we define $D^-$ on $\mathcal{L}^0(\Omega)$ as
\begin{equation}\label{defdm}
 D_z^-F:=\mfk(\eta)-\mfk(\eta-\delta_z)\,,\,\, \mbox{ if\ \ } z\in {\rm supp}({\eta}) \,,\,\, \mbox{and\ \ } D_z^-F:=0, \,\, \mbox{otherwise,} \end{equation}
{where 
$ {\rm supp}({\chi}):=\bigl\{z\in\mathcal{Z}\,:\, \text{for all $A\in\mathscr{Z}$ s.t. $z\in A$: $\chi(A)\geq1$}\bigr\}$
is the support of the measure $\chi\in{\bf N}_\sigma$}. We call $-D^-$ the \textbf{remove-one cost operator} associated with $\eta$. We stress that the definitions of $D^+F$ and $D^-F$ are, respectively, $P\otimes \mu$-a.e. and $P$-a.s. independent of the choice of the representative $\mfk$ --- see e.g. the discussion contained \cite[Section 2]{dp} and the references therein. Note that he operator $D^+$ can be straightforwardly iterated by setting $D^{(1)}:=D^{+}$ and, for $n\geq2$ and $z_1,\dotsc,z_n\in Z$ and $F\in\mathcal{L}^0(\Omega)$, by recursively defining
\begin{equation*}
 D^{(n)}_{z_1,\dotsc,z_n}F:=D^{+}_{z_1}\bigl(D^{(n-1)}_{z_2,\dotsc,z_n}F\bigr).
\end{equation*}

\subsection{$L^1$ integration by parts}

One of the most fundamental formulae in the theory of Poisson processes is the so-called \textbf{Mecke formula} stating that, for each measurable function $h:\mathbf{N}_\sigma\times Z\rightarrow [0,+\infty]$, the identity
\begin{equation}\label{mecke}
\E\biggl[\int_Z h(\eta+\delta_z,z)\mu(dz)\biggr] = \E\biggl[\int_Z h(\eta,z)\eta(dz)\biggr]
\end{equation}
holds true; see \cite[Chapter 4]{LPbook} for a detailed discussion. Such a formula can be used in order to define an (approximate) integration by parts formula on the Poisson space, 

\medskip

{{} For random variables $F,G\in \mathcal{L}^0(\Omega)$ such that $D^+F\, D^+G\in L^1(P \otimes \mu)$, we define
\begin{equation}\label{e:Gamma0}
\Gamma_0(F,G) := \frac12\left\{  \int_Z (D_z^+F D_z^+G) \, \mu(dz)  + \int_Z (D_z^-F D_z^-G) \, \eta(dz)\right\}
\end{equation} 
which verifies $\E[|\Gamma_0(F,G) |] <\infty$, and $\E[\Gamma_0(F,G)] =\E[  \int_Z (D_z^+F D_z^+G) \,  \mu(dz) ]$, in view of Mecke formula. The following statement, taken from \cite{dp}, can be regarded as an integration by parts formula in the framework of Poisson random measures, playing a role similar to that of Lemma \ref{L : Tech1} in the setting of Gaussian fields. It is an almost direct consequence of \eqref{mecke}.

\begin{Lem}[$L^1$ integration by parts]\label{l:ibpl1} Let $G,H \in \mathcal{L}^0(\Omega)$ be such that $$G D^+H, \,\, D^+G\, D^+H\in L^1(P\otimes \mu).$$  
Then,
\begin{equation}\label{e:ibpl1}
\E\left[ G\left(\int_Z D_z^+H\, \mu(dz) - \int_Z D_z^ -H\, \eta(dz)\right)\right] = -\E[\Gamma_0(G,H)]. 
\end{equation}
\end{Lem}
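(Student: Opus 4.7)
The plan is to apply the Mecke formula \eqref{mecke} to the ``remove-one'' integral $\int_Z D_z^-H\,\eta(dz)$ appearing on the left-hand side of \eqref{e:ibpl1}, and then to recognize the resulting expression as $-\E[\Gamma_0(G,H)]$ via a second application of the same formula. Throughout, I denote by $\mgk$ and $\mhk$ measurable representatives of $G$ and $H$, so that $G=\mgk(\eta)$ and $H=\mhk(\eta)$ $P$-a.s.

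First I would write $\E\bigl[\int_Z G\,D_z^-H\,\eta(dz)\bigr]=\E\bigl[\int_Z h(\eta,z)\,\eta(dz)\bigr]$ with
\[
h(\chi,z):=\mgk(\chi)\bigl(\mhk(\chi)-\mhk(\chi-\delta_z)\bigr)\mathds{1}\{z\in{\rm supp}(\chi)\},
\]
and apply \eqref{mecke} separately to the positive and negative parts of $h$. Since $z\in{\rm supp}(\eta+\delta_z)$, this yields
\[
\E\left[\int_Z G\,D_z^-H\,\eta(dz)\right] = \E\left[\int_Z \mgk(\eta+\delta_z)\bigl(\mhk(\eta+\delta_z)-\mhk(\eta)\bigr)\,\mu(dz)\right].
\]
Because $\mu$ is non-atomic, one has $z\notin{\rm supp}(\eta)$ for $P\otimes\mu$-a.e.\ $(\omega,z)$, whence $\mgk(\eta+\delta_z)=G+D_z^+G$ and $\mhk(\eta+\delta_z)-\mhk(\eta)=D_z^+H$ hold $P\otimes\mu$-a.e. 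Expanding the product produces
\[
\E\left[\int_Z G\,D_z^-H\,\eta(dz)\right] = \E\left[\int_Z G\,D_z^+H\,\mu(dz)\right] + \E\left[\int_Z D_z^+G\,D_z^+H\,\mu(dz)\right],
\]
and subtracting this identity from $\E\bigl[G\int_Z D_z^+H\,\mu(dz)\bigr]$ leaves exactly $-\E\bigl[\int_Z D_z^+G\,D_z^+H\,\mu(dz)\bigr]$.

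It remains to identify the latter with $-\E[\Gamma_0(G,H)]$. Applying \eqref{mecke} once more to $(\chi,z)\mapsto D_z^-G(\chi)\,D_z^-H(\chi)\,\mathds{1}\{z\in{\rm supp}(\chi)\}$ and using that, thanks to non-atomicity of $\mu$, the identity $D_z^-F\circ(\eta+\delta_z)=D_z^+F$ holds $P\otimes\mu$-a.e., I obtain
\[
\E\left[\int_Z D_z^-G\,D_z^-H\,\eta(dz)\right]=\E\left[\int_Z D_z^+G\,D_z^+H\,\mu(dz)\right],
\]
so that the definition \eqref{e:Gamma0} of $\Gamma_0$ gives $\E[\Gamma_0(G,H)]=\E\bigl[\int_Z D_z^+G\,D_z^+H\,\mu(dz)\bigr]$, completing the proof.

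The main technical point to watch is that \eqref{mecke} is stated only for nonnegative integrands, whereas both applications above involve signed functions of $\chi$ and $z$. The hypotheses $GD^+H,\,D^+G\,D^+H\in L^1(P\otimes\mu)$ are precisely what is needed to handle this: applying \eqref{mecke} to the absolute values $|\mgk(\chi)(\mhk(\chi)-\mhk(\chi-\delta_z))|\mathds{1}\{z\in{\rm supp}(\chi)\}$ and $|D_z^-G\,D_z^-H|\mathds{1}\{z\in{\rm supp}(\chi)\}$, and then using the non-atomicity of $\mu$ as above, one checks that the corresponding $\eta(dz)$-integrals lie in $L^1(P)$. This legitimates the decomposition into positive and negative parts, after which the two displayed computations combine to give \eqref{e:ibpl1}.
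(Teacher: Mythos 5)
Your proof is correct and follows exactly the route the paper indicates: the lemma is presented there as an ``almost direct consequence'' of the Mecke formula \eqref{mecke}, and your two applications of that formula (one to convert $\int_Z G\,D_z^-H\,\eta(dz)$ into $\mu$-integrals, one to identify $\E[\Gamma_0(G,H)]$ with $\E[\int_Z D_z^+G\,D_z^+H\,\mu(dz)]$, the latter already stated in the paper right before the lemma) are precisely the details being alluded to. Your handling of the signed-integrand issue via the $L^1$ hypotheses is the right technical point to flag.
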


\medskip

We will now focus on multiple Wiener-It\^o integrals.

\subsection{Multiple integrals}

 For an integer $p\geq1$ we denote by $L^2(\mu^p)$ the Hilbert space of all square-integrable and real-valued functions on $\mathcal{Z}^p$ and we write $L^2_s(\mu^p)$ 
for the subspace of those functions in $L^2(\mu^p)$ which are $\mu^p$-a.e. symmetric. Moreover, for ease of notation, we denote by $\Enorm{\cdot}$ and $\langle \cdot,\cdot\rangle_2$ the usual norm and scalar product 
on $L^2(\mu^p)$ for whatever value of $p$. We further define $L^2(\mu^0):=\R$. For $f\in L^2(\mu^p)$, we denote by $I_p(f)$ the \textbf{multiple Wiener-It\^o integral} of $f$ with respect to $\hat{\eta}$. 
If $p=0$, then, by convention, 
$I_0(c):=c$ for each $c\in\R$. Now let $p,q\geq0$ be integers. The following basic properties are proved e.g. in \cite{Lsurvey}, and are analogous to the properties of multiple integrals in a Gaussian framework, as discussed in Section \ref{ss:isonormal}: 
\begin{enumerate}
 
 \item $I_p(f)=I_p(\tilde{f})$, where $\tilde{f}$ denotes the \textbf{canonical symmetrization} of $f\in L^2(\mu^p)$;
 
 \item $I_p(f)\in L^2(P)$, and $\E\bigl[I_p(f)I_q(g)\bigr]= \delta_{p,q}\,p!\,\langle \tilde{f},\tilde{g}\rangle_2 $, where $\delta_{p,q}$ denotes the Kronecker's delta symbol.
\end{enumerate}

\smallskip

As in the Gaussian framework of Section \ref{ss:isonormal}, for $p\geq0$ the Hilbert space consisting of all random variables $I_p(f)$, $f\in L^2(\mu^p)$, is called the \textbf{$p$-th Wiener chaos} associated with $\eta$, and is customarily denoted by $C_p$. It is a crucial fact that every $F\in L^2(P)$ admits a unique representation 
\begin{equation}\label{chaosdec}
 F=\E[F]+\sum_{p=1}^\infty I_p(f_p)\,,
\end{equation}
where $f_p\in L_s^2(\mu^p)$, $p\geq1$, are suitable symmetric kernel functions, and the series converges in $L^2(P)$. Identity \eqref{chaosdec} is the analogous of relation \eqref{E}, and is once again referred to as the \textbf{chaotic decomposition} of the functional $F\in L^2(P)$. 

\medskip

The multiple integrals discussed in this section also enjoy multiplicative properties similar to formula \eqref{multiplication} above -- see e.g. \cite[Proposition 5]{Lsurvey} for a precise statement. One consequence of such product formulae is that, if $F\in C_p$ and $G\in C_q$ are such that $FG$ is square-integrable, then
\begin{equation}\label{e:fg}
FG \in \bigoplus_{r=0}^{p+q}C_r,
\end{equation}
which cab be seen as a property analogous to \eqref{fundamental-assumtionbis}.
\subsection{Malliavin operators}

 We now briefly discuss Malliavin operators on the Poisson space. 
\begin{enumerate}

\item The \textbf{domain} $\dom D$ of the {\bf Malliavin derivative operator }$D$ is the set of all $F\in L^2( P )$ such that the chaotic decomposition \eqref{chaosdec} of $F$ satisfies $\sum_{p=1}^\infty p\,p!\Enorm{f_p}^2<\infty$. For such an $F$, the random function $Z \ni z\mapsto D_zF\in L^2( P )$ is defined via
\begin{equation}\label{e:D}
 D_zF=\sum_{p=1}^\infty p I_{p-1}\bigl(f_p(z,\cdot)\bigr)\,,
\end{equation}
{whenever $z$ is such that the series is converging in $L^2( P )$ (this happens a.e.-$\mu$), and set to zero otherwise; note that $f_p(z,\cdot)$ is an a.e. symmetric function on ${Z}^{p-1}$.} Hence, $DF=(D_zF)_{z\in\mathcal{Z}}$ is indeed an element of $L^2\bigl( P \otimes \mu\bigr)$. 
 It is well-known that $F \in \dom D$ if and only if $D^+F\in L^2\bigl( P \otimes \mu\bigr)$, and in this case 
\begin{equation}\label{altDz}
D_zF=D_z^+F,\quad  P \otimes \mu{\rm -a.e.} . 
\end{equation}

\item The domain $\dom {\bf L}$ of the \textbf{Ornstein-Uhlenbeck generator} ${\bf L}$ is the set of those $F\in L^2( P )$ whose chaotic decomposition \eqref{chaosdec} verifies $\sum_{p=1}^\infty p^2\,p!\Enorm{f_p}^2<\infty$ (so that $\dom {\bf L} \subset \dom D$) and, for $F\in\dom {\bf L}$, one defines
\begin{equation}\label{defL}
 {\bf L} F=-\sum_{p=1}^\infty p I_p(f_p)\,.
\end{equation}
By definition, $\E[{\bf L} F]=0$; also, from \eqref{defL} it is easy to see that ${\bf L} $ is \textbf{symmetric}, in the sense that 
\begin{equation*}
 \E\bigl[({\bf L} F)G\bigr]=\E\bigl[F({\bf L} G)\bigr]
\end{equation*}
for all $F,G\in\dom{\bf L} $. Note that, from \eqref{defL}, it is immediate that the spectrum of $-{\bf L}$ is given by the nonnegative integers and that $F\in \dom {\bf L} $ is an eigenfunction of $-{\bf L}$ with corresponding eigenvalue $p$ if and only if $F=I_p(f_p)$ for some $f_p\in L^2_s(\mu^p)$, that is: 
\begin{equation*}
 C_p={\rm Ker}({\bf L}+pI).
\end{equation*}
{} The following identity corresponds to formula (65) in \cite{Lsurvey}: if $F\in\dom {\bf L}$ is such that $D^+ F\in L^1( P \otimes\mu)$, then 
\begin{equation}\label{formL}
 {\bf L}F=\int_ \mathcal{Z} \bigl(D_z^+F\bigr) \mu(dz)-\int_\mathcal{Z}\bigl(D_z^{-}F\bigr)\eta(dz)\,.
\end{equation}

{Define for any $F\in L^2(P)$ the pseudo-inverse ${\bf L}^{-1}$ by
\begin{align*}
{\bf L}^{-1} F = - \sum_{p=1}^\infty \frac{1}{p} I_p(f_p). 
\end{align*}
Recall the covariance identity
\begin{align}\label{e:covariance}
\Cov( F, G) = - \int  \E[ D_zG D_z {\bf L}^{-1}F] \mu(dz)
\end{align}}


\item For suitable random variables $F,G\in\dom {\bf L}$ such that $FG\in\dom {\bf L}$, we introduce the \textbf{carr\'{e} du champ operator} $\Gamma$ associated with ${\bf L}$ by 
\begin{equation}\label{cdc}
 \Gamma(F,G):=\frac{1}{2}\bigl({\bf L}(FG)-F{\bf L}G-G{\bf L}F\bigr)\,.
\end{equation}
{{} The symmetry of ${\bf L}$} implies immediately the crucial \textbf{integration by parts formula} 
\begin{equation}\label{intparts}
 \E\bigl[({\bf L}F)G\bigr]=\E\bigl[F({\bf L}G)\bigr]=-\E\bigl[\Gamma(F,G)\bigr];
\end{equation}
we will see below that, for many random variables $F,G$, relation \eqref{intparts} is indeed the same as identity appearing in Lemma \ref{l:ibpl1}. 

\smallskip

\end{enumerate}

The following result -- proved in \cite{dp} -- provides an explicit representation of the carr\'{e}-du-champ operator $\Gamma$ in terms of $\Gamma_0$, as introduced in \eqref{e:Gamma0}. 
\begin{prop}\label{cdcform}
 For all $F,G\in\dom {\bf L}$ such that $FG\in\dom {\bf L}$ and $$DF,\, DG,\, FDG, \,GDF \in L^1( P \otimes\mu),$$ we have that $DF= D^+ F, \, DG= D^+G$, in such a way that $DF\, DG = D^+F\, D^+G  \in L^1( P \otimes\mu)$, and
 \begin{equation}\label{e:identityGamma}
  \Gamma(F,G)=\Gamma_0(F,G),
 \end{equation}
where $\Gamma_0$ is defined in \eqref{e:Gamma0}.
\end{prop}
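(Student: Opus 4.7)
The plan is to combine the chaos-based definition of the Malliavin derivative with the pathwise representation \eqref{formL} of the Ornstein-Uhlenbeck generator, and to exploit the elementary product rules satisfied by the add-one and remove-one cost operators. The first assertion is essentially immediate: since $F,G\in\dom {\bf L}\subset \dom D$, the identification \eqref{altDz} yields $DF=D^+F$ and $DG=D^+G$ $P\otimes\mu$-a.e., so that $DF\cdot DG=D^+F\cdot D^+G$, and this product belongs to $L^1(P\otimes\mu)$ by hypothesis.

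For the identity $\Gamma(F,G)=\Gamma_0(F,G)$, I would first verify that \eqref{formL} can be applied to each of the three random variables $F$, $G$ and $FG$. For $F$ and $G$ this follows from the standing hypothesis $DF,DG\in L^1(P\otimes\mu)$ combined with the identification above. For $FG$ I appeal to the elementary product rule
\begin{equation*}
D_z^+(FG)=F\,D_z^+G+G\,D_z^+F+D_z^+F\,D_z^+G,
\end{equation*}
whose three summands are absolutely integrable against $P\otimes\mu$ by the assumptions $FDG,\,GDF,\,DF\cdot DG\in L^1(P\otimes\mu)$. A parallel computation, obtained by writing $F(\eta-\delta_z)=F-D_z^-F$ and $G(\eta-\delta_z)=G-D_z^-G$ for $z\in{\rm supp}(\eta)$, produces
\begin{equation*}
D_z^-(FG)=F\,D_z^-G+G\,D_z^-F-D_z^-F\,D_z^-G.
\end{equation*}

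Substituting both expansions into \eqref{formL} evaluated at $FG$, and regrouping the six resulting terms according to whether they involve $F$, $G$, or the product of the two difference operators, I would recognise
\begin{equation*}
F\Big(\int_Z D_z^+G\,\mu(dz)-\int_Z D_z^-G\,\eta(dz)\Big)+G\Big(\int_Z D_z^+F\,\mu(dz)-\int_Z D_z^-F\,\eta(dz)\Big)=F\,{\bf L}G+G\,{\bf L}F,
\end{equation*}
again by \eqref{formL} applied separately to $F$ and $G$, while the remaining contributions sum to exactly $2\Gamma_0(F,G)$. Dividing by two and invoking the definition \eqref{cdc} of the carr\'e-du-champ then closes the argument.

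The delicate point is not the algebra but the justification of the splittings: one has to verify that the two sides of \eqref{formL} remain finite when evaluated on $FG$, and that the resulting $\mu$- and $\eta$-integrals can legitimately be decomposed into six individually convergent pieces. This is precisely what forces the integrability requirements on $FDG$, $GDF$ and $DF\cdot DG$; the corresponding integrability against $\eta$ of the remove-one counterparts can be recovered from the Mecke formula \eqref{mecke}, which converts $\eta$-integrals into $\mu$-integrals at the cost of shifting the argument by $\delta_z$.
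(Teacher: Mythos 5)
The survey does not actually prove this proposition --- it only states it and refers to \cite{dp} for the proof --- but your argument coincides with the standard one given there: identify $D$ with $D^+$ via \eqref{altDz}, expand $D^+_z(FG)$ and $D^-_z(FG)$ by the elementary product rules, insert into the pathwise formula \eqref{formL} for ${\bf L}(FG)$, and regroup so that \eqref{cdc} produces exactly $\Gamma_0(F,G)$; the Mecke formula handles the $\eta$-integrated remainders. The algebra and the integrability bookkeeping are correct. One small correction: you assert that $D^+F\,D^+G\in L^1(P\otimes\mu)$ ``by hypothesis'', but this is not among the stated hypotheses --- it is part of the conclusion (``in such a way that \dots''). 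It follows instead from the Cauchy--Schwarz inequality, since $F,G\in\dom{\bf L}\subset\dom D$ forces $DF,DG\in L^2(P\otimes\mu)$ and hence $DF\,DG\in L^1(P\otimes\mu)$. With that one-line fix, the proof is complete.
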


One crucial consequence of such a result is that the operator $\Gamma$ {\it is not diffusive}, in such a way that the triple $(\Omega, P, {\bf L})$ {\it is not a diffusive fourth moment structure}, such as the ones introduced in Definition \ref{d:dfms}; it follows in particular that the machinery of Section \ref{s:mt} cannot be directly applied.

\subsection{Fourth moment theorems}
Starting at least from the reference \cite{pstu} (where Malliavin calculus and Stein's method were first combined on the Poisson space), it has been an open problem for several years that of establishing a fourth moment bound similar to Theorem \ref{Thm:4mt-general} on the Poisson space. As recalled above, the main difficulty in achieving such a result is the discrete nature of add-one and remove-one cost operators, preventing in particular the triple $(\Omega, P, {\bf L})$ from enjoying a diffusive property.

\medskip

The next statement contains one of the main bounds proved in \cite{dvz}, and shows that a quantitative fourth moment bound is available on the Poisson space. Such a bound (which also has a multidimensional extension) is proved by a clever combination of Malliavin-type techniques with an infinitesimal version of the {\bf exchangeable pairs approach} toward Stein's method -- see e.g. \cite{Chen-book}. 

\begin{Thm} For $q\geq 2$, let $F = I_q(f_q)$ be a multiple integral of order $q$ with respect to $\hat{\eta}$, and assume that $\E[F^2]=1$. Then,
$$
d_W(F,N)\leq \left(  \sqrt{\frac{2}{\pi}} +\frac43   \right)\sqrt{\E[F^4] - 3}.
$$
\end{Thm}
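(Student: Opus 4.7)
The plan is to adapt the Gaussian Malliavin-Stein argument of Theorem \ref{t:gb} to the Poisson setting, accounting for the non-diffusive nature of the carré-du-champ operator $\Gamma$ through a discrete Taylor expansion of the add-one cost operator $D^+$. First, by (the Wasserstein refinement of) Lemma \ref{lem:stein}(d), using the standard sharper Stein bounds $\|f_h'\|_\infty \le \sqrt{2/\pi}$ and $\|f_h''\|_\infty \le 2$ for the Stein solution attached to a Lipschitz-$1$ test function $h$, the proof reduces to estimating $|\E[f'(F) - F f(F)]|$ uniformly over $f$ with these properties. Since $F = I_q(f_q) \in C_q = \mathrm{Ker}({\bf L} + qI)$, one has ${\bf L}F = -qF$, and combining the integration-by-parts formula \eqref{intparts} with Proposition \ref{cdcform} and Mecke's formula \eqref{mecke} gives
$$\E[F f(F)] = -\tfrac{1}{q}\E[({\bf L}F)f(F)] = \tfrac{1}{q}\E[\Gamma_0(F,f(F))] = \tfrac{1}{q}\E\!\left[\int_Z D_z^+F \cdot D_z^+ f(F) \,\mu(dz)\right].$$
In the Gaussian world, the diffusive chain rule would turn $D_z^+ f(F)$ into $f'(F)\,D_z^+F$ exactly, reducing matters to the variance of the carré-du-champ; on the Poisson space this identity is only true up to a second-order error — precisely the reason why the Markov-triple machinery of Section \ref{s:mt} cannot be applied verbatim.

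Next, I would Taylor-expand the add-one cost as $D_z^+ f(F) = f(F + D_z^+F) - f(F) = f'(F)\, D_z^+F + R_z$, with the pointwise remainder bound $|R_z| \le \tfrac12 \|f''\|_\infty (D_z^+F)^2$. Setting $T := \int_Z (D_z^+F)^2 \mu(dz)$, Mecke's formula yields $\E[T] = \E[\Gamma_0(F,F)] = -\E[F{\bf L}F] = q$, hence
$$\E[f'(F) - F f(F)] = \E\!\left[f'(F)\!\left(1 - \tfrac{T}{q}\right)\right] - \tfrac{1}{q}\E\!\left[\int_Z D_z^+F\cdot R_z\,\mu(dz)\right].$$
Applying Cauchy-Schwarz to the first term and the pointwise control of $R_z$ to the second gives
$$|\E[f'(F) - F f(F)]| \le \tfrac{\|f'\|_\infty}{q}\sqrt{\Var(T)} + \tfrac{\|f''\|_\infty}{2q}\E\!\left[\int_Z |D_z^+F|^3 \,\mu(dz)\right].$$

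The remaining task is to dominate each of the two right-hand quantities by a universal multiple of $\sqrt{\E[F^4]-3}$. For $\Var(T)/q^2$, the strategy mirrors Proposition \ref{prop:variance-estimate}: use the product formula for Poisson multiple integrals to decompose $T - q$ into a chaos sum of orders $\le 2q-1$ and bound the resulting contracted kernels by the fourth cumulant of $F$. The delicate piece — and the genuine obstacle — is the cubic "Poisson triple-norm" estimate
$$\tfrac{1}{q}\,\E\!\left[\int_Z |D_z^+F|^3 \,\mu(dz)\right] \;\le\; \tfrac{4}{3}\sqrt{\E[F^4]-3},$$
which has no diffusive counterpart. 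Following the program of \cite{dvz}, I would establish it through an infinitesimal exchangeable-pairs identity on the Poisson space (coupling $\eta$ with an independent thinning/enrichment), which translates the control of $\E[\int |D_z^+F|^3 \mu(dz)]$ into a chaos-by-chaos bound on the "non-diagonal" contractions that appear when expanding $(D_z^+F)^3$ and integrating against $\mu$. These contractions, in turn, are dominated by the fourth-moment gap via the standard positivity arguments for Poisson chaoses. Assembling the two estimates with $\|f'\|_\infty \le \sqrt{2/\pi}$ and $\|f''\|_\infty \le 2$ produces the claimed constant $\sqrt{2/\pi} + 4/3$.
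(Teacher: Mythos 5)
First, a point of reference: the survey does not prove this theorem; it only states it and attributes the proof to \cite{dvz} (via an infinitesimal exchangeable-pairs argument), with a precursor in \cite{dp}. Your skeleton is in fact closer to the route of \cite{dp}: integration by parts \eqref{intparts} plus Proposition \ref{cdcform} and Mecke's formula \eqref{mecke}, followed by a second-order Taylor expansion of the add-one cost. That skeleton is sound as far as it goes, and the reduction to the two quantities $\frac{1}{q}\sqrt{\Var(T)}$ and $\frac{1}{q}\E\bigl[\int_Z|D_z^+F|^3\mu(dz)\bigr]$ is correctly derived. The problem is that both key quantitative inputs are asserted rather than proved, and the way you propose to obtain them is precisely where the known difficulty lies.

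Concretely: (i) you propose to bound $\Var(T)$, with $T=\int_Z(D_z^+F)^2\mu(dz)$, by ``decomposing $T-q$ into a chaos sum and bounding the resulting contracted kernels by the fourth cumulant.'' On the Poisson space the fourth cumulant of $F=I_q(f_q)$ is \emph{not} a sum of squared contraction norms: it contains additional terms of no definite sign, and for years this was exactly the obstruction to a Poisson fourth moment theorem. The working argument (in \cite{dp}) instead runs the spectral estimate of Proposition \ref{prop:variance-estimate} on the carr\'e-du-champ $\Gamma(F,F)=\frac12(\mathbf{L}+2q)(F^2-\E[F^2])$ --- note this is \emph{not} your $T$, since $\Gamma_0$ in \eqref{e:Gamma0} carries the extra term $\int(D_z^-F)^2\eta(dz)$ --- and then substitutes the failed chain-rule step by the identity quoted in the survey, $\frac{1}{2q}\int_Z\E[|D_z^+F|^4]\mu(dz)=\frac{3}{q}\E[F^2\Gamma(F,F)]-\E[F^4]$, whose nonnegativity is what saves the day. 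You need to either switch from $T$ to $\Gamma(F,F)$ or control their difference; neither is addressed. (ii) The cubic estimate is the heart of the matter and is left entirely to a citation of a method. Moreover, even granting the available bound $\int_Z\E[|D_z^+F|^3]\mu(dz)\le\sqrt{q}\bigl(\int_Z\E[|D_z^+F|^4]\mu(dz)\bigr)^{1/2}\le\sqrt{q(4q-3)}\sqrt{\E[F^4]-3}$, your Taylor remainder enters with prefactor $\|f''\|_\infty/2=1$, which yields a contribution of order $2\sqrt{\E[F^4]-3}$, not $\frac43\sqrt{\E[F^4]-3}$. The constant $\frac43$ in the statement is produced by the exchangeable-pairs functional inequality of \cite{dvz}, in which the cubic term carries an intrinsic factor $\frac13$; it does not come out of the direct Taylor-expansion route you set up. So the proposal, as written, has two genuine gaps and, even if both were filled along the lines you indicate, would prove the inequality only with a strictly larger constant.
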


One should notice that the first bound of this type was proved in \cite{dp} under slightly more restrictive assumptions; also, reference \cite{dp} contains analogous bounds in the Kolmogorov distance, that are not achievable by using exchangeable pairs. In particular, one of the key estimates used in \cite{dp} is the following remarkable equality and bound
\begin{equation*}
\frac{1}{2q}\int_Z\E\bigl[\abs{D_z^+F}^4\bigr]\mu(dz)=\frac{3}{q}\E\bigl[F^2\Gamma(F,F)\bigr]-\E\bigl[F^4\bigr]\leq \frac{4q-3}{2q}\Bigl(\E\bigl[F^4\bigr]-3\E[F^2]^2\Bigr),
\end{equation*}
that are valid for every $F\in C_q$, $q\geq 2$, such that the mapping $z\mapsto D_z^+F$ verifies some minimal integrability conditions. 

{
\subsection{Second-order Poincar\'e estimates}

What one calls {\bf second-order Poincar\'e inequalities} is a collection of analytic estimates (first established on the Poisson space in \cite{LPS16}) where the Wasserstein and Kolmogorov distances, between a given function of $\eta$ and a Gaussian random variable, are bounded by integrated moments of iterated add-one-cost operators on the Poisson space. The ratio behind such a name is the following. Just as the Poincar\'e inequality 
\begin{align}\label{e:Poincare}
\Var(F) \le \int_Z \E[(D^+_z F)^2]  \mu(dz),
\end{align}
controls the variance of a random variable $F$ by means of integrated moments of the add-one cost (see \cite[Section 18.3]{LPbook}), the discrepancy between the distribution of $F$ and that of a Gaussian random variable is controlled by integrated moments of second-order add-one-cost $D^+_xD^+_y F := D^2_{z,y}F$, a phenomenon already observed in the Gaussian setting \cite{cha, NPR09, v}, where gradients 
typically replace add-one-cost operators. 

For the rest of the section, we exclusively consider square-integrable random variables $F$ such that $F\in {\rm dom}\, D$, in such a way that $D^+F = DF$ (up to negligible sets). The starting point for proving second-order Poincar\'e estimates is the covariance identity \eqref{e:covariance}, which can be proved as in the Gaussian setting by means of chaos expansions. When one combines Stein's method with such a formula, it is however not possible to deduce the existence of a Stein kernel as in the Gaussian setting (see \eqref{e:ibpg}), since Malliavin operators on a Poisson space {\it do not} enjoy an exact chain rule such as \eqref{e:chainrule}. Indeed, we have that, for sufficiently smooth mapping $f:\R\to \R$,
\begin{align*}
\Cov(F, f(F)) &= -\int \E[ D_z(f(F)) D_z {\bf L}^{-1} F ] \mu(dz) \\
 &= - \int \E[ f'(F) D_z F D_z {\bf L}^{-1} F] \mu(dz) + R
\end{align*} 
where we approximate $D_z(f(F))=f(F+D_z F) -f(F)$ by $f'(F) D_z F$ with the error term
\begin{align*}
D_z F  \int_0^1 [ f'(F+ tD_zF) -f'(F)] dt
\end{align*}
appearing in the implicit definition of $R$; notice that, in general, one has that \xy{$R\neq 0$}, in such a way that the previous computations do not yield the existence of a Stein kernel. Selecting $f$ as in Lemma \ref{lem:stein}-(d), one can bound the error term in the aforementioned calculation by $|D_z F|^2$. Therefore, for $F$ such that $\E[F]=0$ and $\Var[F]=1$, one has the bound
\begin{align*}
d_W(F,N) \le \sqrt{ \Var\Big[ \int D_z F D_z {\bf L}^{-1} F] \mu(dz)\Big] } + \int  \E[ |D_z F|^2 |D_z {\bf L}^{-1}F| ] \mu(dz).
\end{align*}
Applying the Poincar\'e inequality \eqref{e:Poincare} to the variance term, as well as the contraction bound \cite[Lemma 3.4]{LPS16} for the add-one-cost
\begin{align*}
\E[|D_z {\bf L}^{-1} F|^p] \le \E[|D_z F|^p ], \quad p\ge 1,
\end{align*}
and analogous estimates for the iterated add-one-cost, leads to the following theorem.
\begin{Thm}[Second-order Poincar\'e estimates  \cite{LPS16}]
Let $F\in \dom D$ be such that $\E[F]=0$ and $\Var[F]=1$, and let $N$ be a standard Gaussian random variable. Then,
\begin{align*}
d_W(F,N)\le \gamma_1 + \gamma_2 + \gamma_3,
\end{align*}
where
\begin{align*}
\gamma_1 &:= 2\Big[ \iiint  \E[ (D_x F D_y F)^2]^{1/2} \E[ (D^2_{x,z} F D^2_{y,z} F)^2 ]^{1/2}  \mu^3(dxdydz)  \Big]^{1/2},\\
\gamma_2&:= \Big[\iiint \E[(D^2_{x,z} F  D^2_{y,z}F)^2] \mu^3(dxdydz) \Big]^{1/2},\\
\gamma_3&:= \int \E[|D_x F|^3] \mu(dx).
\end{align*}
\end{Thm}

As mentioned above, second-order Poincar\'e techniques are equally useful for obtaining bounds in the Kolmogorov distance -- see \cite{LPS16}, as well as \cite{SY19} for a powerful extension to the framework of multivariate normal approximations.   \\

An example of a successful application of second-order Poincar\'e estimates from \cite{LPS16} (to which we refer the reader for a discussion of the associated literature) is the derivation of presumably optimal Berry-Esseen bounds for the total edge length of the Poisson-based nearest neighbor graph. More precisely, let $\eta_t$ be a Poisson point process with intensity $t>0$ on a convex compact set $H\subset \R^d$.  We consider the graph with vertex set $\supp \eta_t$ and edge set formed by $\{x,y\}\subset \supp \eta_t$ when either $x$ is the nearest neighbor of $y$ or the other way around. Consider the total edge length of the graph so obtained, denoted by $L_t$. Then we have
\begin{align*}
d_W\Big(\frac{L_t - \E[L_t]}{\sqrt{\Var[L_t]}},N\Big), \, d_K\Big(\frac{L_t - \E[L_t]}{\sqrt{\Var[L_t]}},N\Big) \le \frac{C}{\sqrt{t}},
\end{align*}
where $d_K$ is the one-dimensional Kolmogorov distance, and $C$ depends only on $H$. We refer the reader to \cite[Theorem 7.1]{LPS16} for a far more general statement, and to \cite{LrSY19} for a collection of presumably optimal bounds on the normal approximation of {\bf exponentially stabilizing} random variables (see the next subsection). }

{
\subsection{Stabilization theory and two-scale bounds}

While the second-order Poincar\'e estimates can provide sharp Berry-Esseen bounds, they are not always applicable. This is the case, for instance, for certain combinatorial optimisation statistics or connectivity functionals of the underlying Poisson process. The problem is typically that the iterated add-one-cost of the functionals, although well-defined almost surely, are not computationally tractable, e.g. for obtaining moment estimates. 

In this section, we present an alternative collection of analytic inequalities, called the {\bf two-scale stabilization bounds}, which avoid the use of  iterated add-one-cost -- they are one of the main findings from \cite{LrPY20+}; see also \cite{CS17} for several related estimates obtained by a discretization procedure. As their name suggests,  these bounds are closely related to the stabilization theory of Penrose and Yukich \cite{PY01, Penrose05}. Such a theory originated from the ground-breaking central limit theorem of Kesten and Lee \cite{KL97} for the total edge weight $M_n$ of Euclidean minimal spanning trees (MST) with stationary Poisson points $\eta_n$ in a ball of radius $n\in\mathbb{N}$. Recall that the MST is the connected graph over the vertex set $\eta_n$ that minimises its total length.  Without referring to the stochastic analysis on the Poisson space, Kesten and Lee already performed a fine study of the add-one-cost of $M_n$ (and {\it not} of the iterated add-one-cost) implying some moment estimates of $D_x M_n$. Penrose and Yukich \cite{PY01} extrapolated the high level ideas from \cite{KL97} and transformed them into a general theory applicable to (non-quantitative) central limit theorems for a plethora of problems in stochastic geometry. 
The theory was further extended to multivariate normal approximation by Penrose \cite{Penrose05}. A variant of the theory using score functionals was put forward by Baryshnikov and Yukich \cite{BY05}.

We now define properly the notions of strong and weak stabilization. We assume for concreteness that the ambient space is $\R^d$ and $\eta$ is a Poisson process of unit intensity. A Poisson functional $F=F(\eta)$ is  {\bf strongly stabilizing} if there exists an almost surely finite random variable $R$, called the {\bf stabilization radius}, such that
\begin{align*}
D_0 F( \eta|_{B_R}) = D_0 F(\eta),
\end{align*}
where $B_R$ stands for a ball with radius $R$ centered at the origin. Here is a simple example. Fix $r>0$ and make an edge between two points in $\eta_n:=\eta|_{B_n}$ within distance $r$. The graph $G(\eta_n,r)$ so obtained is known as the {\bf Gilbert graph} or the {\bf random geometric graph}. Then, the number $F(\eta_n)$ of edges within a finite window containing the origin has stabilization radius $R=r$ almost surely, since $D_0 F(\eta)$ is the number of edges incident to the origin in $G(\eta+\delta_0,r)$. Proving strong stabilization often relies on combinatorial and geometric arguments in many problems of stochastic geometry, see \cite{PY01} for a list of  examples. In general situations, $R$ is genuinely random in contrast to the simple example given above. 

To obtain central limit theorems, it actually suffices to show a weaker version of stabilization. We say that $F$ is {\bf weakly stabilizing} if for \emph{any} sequence of measurable sets $E_n$ satisfying $\liminf E_n=\R^d$, we have the almost sure convergence
\begin{align*}
D_0 F(\eta|_{E_n}) \to \Delta
\end{align*}
where $\Delta$ is a random variable. 
It is clear that a strongly stabilizing functional is also weakly stabilizing with $\Delta=D_0F(\eta)$. 

\begin{Thm}[ See{\cite[Theorem 3.1]{PY01}}]\label{t:PY} Suppose that $F$ is weakly stabilizing and satisfies the moment condition
\begin{align*}
\sup_A \E[ |D_0 F(\eta|_A) |^4] <\infty,
\end{align*}
where the supremum is taken for all ball $A$ that contains $0$. Then there exists $\sigma^2\ge 0$, such that
\begin{align*}
\frac{1}{\sqrt{\mathrm{Vol}(B_n)}}(F(\eta_n) - \E[F(\eta_n)]) \overset{d}\to N(0,\sigma^2).
\end{align*}
\end{Thm}

It is remarkable how few assumptions one needs in order to obtain a CLT, which somehow indicates that stabilization is the right condition for Gaussian approximations on the Poisson space. Notice that the limiting variance $\sigma^2$ could be 0. In \cite{PY01}, it was shown that $\sigma^2>0$ whenever $\Delta$ is not a constant. 
Theorem \ref{t:PY} was proved by a martingale method and does not offer insights on how fast the normalized sequence converges to normal. The latter question was addressed by a recent preprint by Lachi\`eze-Rey, Peccati and Yang \cite{LrPY20+}. Under slightly strengthened  conditions on the functionals, they assessed the rate of normal approximation in Theorem \ref{t:PY}. To state one of the bound that can be deduced from \cite{LrPY20+}, we consider again the ball $B_n$ of radius $n$ centered at the origin, and introduce the key quantity
\begin{align*}
\psi_n:= \sup_{x\in B_n} \E[|D_x F(\eta|_{B_n}) - D_x F(\eta|_{A_{n,x}})|]., \quad n\geq 1.
\end{align*}
In practice, we take $A_{n,x}=B_{b_n}(x)=\{y: |x-y|\le b_n\}$ with  $1\ll b_n\ll n$ which is a local window of $x$ compared to the scale of $B_n$. In what follows, we make this choice and call $\psi_n$ a {\bf two-scale discrepancy} in view of this interpretation.
The following result, taken from \cite{LrPY20+}, can be applied in many concrete applications. 

\begin{Thm}[{\cite[Corollary 1.3]{LrPY20+}}] \label{t:LrPY} Set $\hat F_n = (F(\eta_n)-\E[F(\eta_n)])/\sqrt{\Var[F(\eta_n)]}$, where we have set $\eta_n=\eta|_{B_n}$ as before.  Suppose that
\begin{align*}
\sup_{n\in\mathbb{N}, x\in B_n}\E[|D_x F(\eta_n)|^p]<\infty
\end{align*}
for some $p>4$ and also that there exists an absolute constant $b>0$ such that $\Var[F(\eta_n)]\ge b|B_n|$. Then there exists a finite positive constant $c$ such that
\begin{align*}
\frac{1}{c} d_W(\hat F_n, N(0,1))\le  \psi_n^{\frac{1}{2}(1-\frac{4}{p})} + \Big(\frac{b_n}{n}\Big)^{\frac{d}{2}}.
\end{align*}
\end{Thm}

This theorem simplifies and extends some arguments in the proof of a quantitative CLT for the minimal spanning trees by Chatterjee and Sen \cite{CS17}.  
Analogous Kolmogorov bounds for univariate normal approximation, and bounds for multivariate normal approximation are also considered in \cite{LrPY20+}.  More remarks are in order.

\begin{Rem}{\rm
\begin{itemize}
\item[i)]The sequence $(b_n)$ serves as a free parameter in the bound. One should keep track of the dependence of $\psi_n$ on $b_n$ and make an optimization in the end.  
\item[ii)] For any fixed $x\in\R^d$, applying the weak stabilization condition for $F$ with two sequences $(B_n)$ and $(B_{b_n}(x))$ (together with the translation invariance of $\eta$ and the  moment assumption for the add-one-cost) yields the following convergence
\begin{align*}
\E[ | D_x F(\eta|_{B_n}) - D_x F(\eta|_{B_{b_n}(x)}) | ] \to 0.
\end{align*}
As such, Theorem \ref{t:LrPY} quantifies Theorem \ref{t:PY} after uniformly strengthening the assumptions of Theorem \ref{t:PY}. 
\item[iii)] When the functional is strongly stabilizing, this bound takes an even simpler form. More precisely, we say $R_x$ is a stabilization radius at $x$ if 
\begin{align*}
D_x F(\eta|_{B_R(x)}) = D_x F(\eta).
\end{align*}
Then,  applying H\"older's inequality and the uniform moment condition for the add-one-cost leads to the existence a positive finite $c$ such that
\begin{align*}
 \psi_n \le c \sup_{x\in B_n} \P[ R_x\ge b_n ]^{1-\frac{1}{p}}.
\end{align*}
Hence, the upper tail of $R_x$ is relevant in the rate of normal approximation. One may further classify the stabilization condition with regards to the decay of the upper tail. For instance, we say that the funcitonal $F$ is \textbf{exponentially stabilizing} if $R_x$ has a sub-exponential upper tail.
\item[iv)] There are some general methods for obtaining variance lower bounds. For example, one can partition the space into non-overlapping cubes of appropriate size then use projection method for functions of independent random variables such as Heoffding decomposition. Another method via chaos expansion was given in \cite[Section 5]{LPS16}
\end{itemize}
}
\end{Rem}

We mention one application where the second order Poincar\'e estimates do not apply but the two-scale stabilization bounds do. Fix $r>0$ and consider the number $K_n$ of components in the Gilbert graph $G(\eta_n, 2r)$ (or equivalently the Boolean model $O_{r,n} = \cup_{x\in \eta_n} B(x,r)$) as $n\to\infty$. This corresponds to the so-called thermodynamic regime, where the collection of models in the whole space $O_r=\cup_{x\in\eta} B(x,r)$ indexed by $r$ exhibit a phase transition 
at certain threshold $r^*\in (0,\infty)$ e.g. in terms of whether $\P[0 \mbox{ is connected to infinity in } O_r ]=0$.  We stress that the analysis of $K_n$ is relatively involved in the critical phase due to the co-existence of the unbounded occupied component and the unbounded vacant component (in $O_r^c$). However, the following estimate was obtained in \cite{LrPY20+} for all $r>0$  in dimension 2  using the strong stabilization bound
\begin{align*}
d_W((K_n-\E[K_n])/\sqrt{\Var[K_n]},N(0,1))\le \frac{C}{n^{\beta}},
\end{align*}
where $C$ and $\beta$ are  finite positive constants. In $d\ge 3$, a poly-logarithmic rate was  obtained. The bottleneck of these estimates are the two-arm exponents of the critical Boolean models which are hard to improve. 

More generally, when one considers higher dimensional topological statistics of the Boolean model such as the Betti numbers, it may occur that strong stabilization does not hold \cite{YSA17, Tri19, CT20}. In such case, the two-scale weak stabilization bound might be well suited for obtaining quantitative CLT. 
}

\section{Malliavin-Stein for targets in the second Wiener chaos}\label{sec:MS2W}

In this section, we present a short overview on the recent development on Mallaivin-Stein approach for target distributions in the second Gaussian Wiener chaos.  We also formulate some important conjectures that will complement the approach. We adopt the same notation as in Section \ref{ss:isonormal} above. Let $W$ stands for an isonormal Gaussian process on a separable Hilbert space $\HH$. Recall that the elements in the second Wiener chaos are random variables having the general form $F=I_2(f)$, with $f \in  \HH^{\odot 2}$. Notice that, if $f=h\otimes h$, where $h \in \HH$ is such that $\Vert h \Vert_{\HH}=1$, then using the multiplication formula one has $I_2(f) \sim N^2 -1$, where $N \sim \mathscr{N}(0,1)$. To any kernel $f \in \HH^{\odot 2}$, we associate the following \textbf{Hilbert-Schmidt} operator
\begin{equation*}
A_f : \HH \mapsto \HH; \quad g \mapsto f\otimes_1 g. 
\end{equation*} 
We also write $\{\alpha_{f,j}\}_{j \ge 1}$ and $\{e_{f,j}\}_{j \ge 1}$, respectively, to indicate the (not necessarily distinct) eigenvalues of $A_f$ and the corresponding eigenvectors. The next proposition gathers together some relevant properties of the elements of the second Wiener chaos associated with $W$.

\begin{prop}[See Section 2.7.4 in \cite{n-p-book}] \label{second-property}
	Let $F=I_{2}(f)$, $f \in \HH^{ \odot 2}$, be a generic element of the second Wiener chaos of $W$, and write $\{\alpha_{f,k}\}_{k\geq 1}$ for the set of the eigenvalues of the 
	associated Hilbert-Schmidt operator $A_f$.
	
	\begin{enumerate}
		\item The following equality holds: $F=\sum_{k\ge 1} \alpha_{f,k} \big( N^2_k -1 \big)$, where $\{N_k\}_{k \ge 1}$ is a sequence of i.i.d. $\mathscr{N}(0,1)$ random variables that are elements of the isonormal process $W$, and the series converges in $L^2$ and almost surely.
		\item For any $r\ge 2$,
		\begin{equation*}
		\kappa_r(F)= 2^{r-1}(r-1)! \sum_{k \ge 1} \alpha_{f,k}^r.
		\end{equation*}
	\end{enumerate}
\end{prop}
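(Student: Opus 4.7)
The plan is to exploit the fact that the Hilbert--Schmidt operator $A_f$ is self-adjoint (because $f\in \HH^{\odot 2}$) and compact, hence admits a spectral decomposition. This will reduce the study of $F=I_2(f)$ to a series in i.i.d.\ standardised chi-square variables, after which both assertions become essentially one-variable computations.

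For item (1), I would first invoke the spectral theorem for $A_f$ to write
\begin{equation*}
f \;=\; \sum_{k\geq 1}\alpha_{f,k}\, e_{f,k}\otimes e_{f,k}
\end{equation*}
in $\HH^{\odot 2}$, where $\{e_{f,k}\}$ is an orthonormal family in $\HH$ (the nonzero eigenvectors of $A_f$). Applying $I_2$ termwise and using continuity of $I_2:\HH^{\odot 2}\to C_2$ (an isometry up to the factor $\sqrt{2!}$) gives $L^2$-convergence of the series $\sum_k \alpha_{f,k} I_2(e_{f,k}^{\otimes 2})$ to $F$. Next, for unit vectors one has $I_2(e_{f,k}^{\otimes 2})=H_2(W(e_{f,k}))=W(e_{f,k})^2-1$; setting $N_k:=W(e_{f,k})$ and using orthonormality together with the defining covariance $\E[W(h)W(g)]=\langle h,g\rangle_\HH$ of an isonormal process, the $N_k$ are i.i.d.\ $\mathscr{N}(0,1)$. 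Almost sure convergence follows from Kolmogorov's two-series theorem applied to the independent centred summands $\alpha_{f,k}(N_k^2-1)$, whose variances $2\alpha_{f,k}^2$ are summable because $A_f$ is Hilbert--Schmidt (i.e.\ $\sum_k \alpha_{f,k}^2=\|f\|_{\HH^{\otimes 2}}^2<\infty$).

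For item (2), I would use independence together with the additivity of cumulants: formally,
\begin{equation*}
\kappa_r(F) \;=\; \sum_{k\geq 1} \kappa_r\bigl(\alpha_{f,k}(N_k^2-1)\bigr) \;=\; \sum_{k\geq 1}\alpha_{f,k}^{\,r}\,\kappa_r(N_1^2-1).
\end{equation*}
The cumulants of $N_1^2-1$ are obtained from the cumulant generating function
\begin{equation*}
\log \E\bigl[e^{t(N_1^2-1)}\bigr] \;=\; -t-\tfrac{1}{2}\log(1-2t) \;=\; \sum_{n\geq 2}\frac{2^{n-1}}{n}\,t^{n},\qquad |t|<\tfrac12,
\end{equation*}
so that $\kappa_r(N_1^2-1)=2^{r-1}(r-1)!$ for $r\geq 2$, which yields the announced formula.

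The delicate step will be the exchange of the infinite sum and the cumulant operator in the display above: one has to justify that the characteristic function of $F$ factorises as an infinite product over $k$ (by dominated convergence applied to the partial sums and independence), take logarithms, and differentiate termwise at $t=0$. This is standard once one observes that the partial sums $F_K:=\sum_{k\leq K}\alpha_{f,k}(N_k^2-1)$ converge to $F$ in every $L^p$ by hypercontractivity on the second chaos, so all cumulants of $F_K$ converge to those of $F$; this last point is in fact the only nontrivial technicality, and it is where hypercontractivity (available on $C_2$) does the essential work.
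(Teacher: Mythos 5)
Your proof is correct. The paper itself offers no proof of this proposition --- it only points to Section 2.7.4 of the cited monograph --- and your argument (spectral decomposition of the compact self-adjoint operator $A_f$, the identity $I_2(e^{\otimes 2})=H_2(W(e))=W(e)^2-1$ for unit vectors, Kolmogorov's theorem for the a.s.\ convergence, and additivity plus $r$-homogeneity of cumulants over the independent summands, with the tail controlled by hypercontractivity on $C_2$) is precisely the standard route taken there; the only cosmetic difference is that the reference often records $\sum_k \alpha_{f,k}^r$ equivalently as $\Tr(A_f^r)$, i.e.\ via iterated contractions of $f$.
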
 
From now on, for simplicity, we consider the target distributions in the second Wiener chaos of the form 

\begin{equation}\label{eq:form}
F_\infty= \sum_{i=1}^{d} \alpha_{\infty, i} (N^2_i -1)
\end{equation} 
where $N_i \sim \mathscr{N}(0,1)$ are i.i.d, and the coefficients ${\alpha_{\infty, i}}$ are distinct. We also assume that $\E[F^2_\infty]=1$. In the special case when $\alpha_{\infty, i}=1$ for $1 \le i \le d$, the target random variable $F_\infty$ reduces to that of a centered \textbf{chi-squared distribution with $d$ degree of freedom}. The Malliavin-Stein approach has been successfully implemented in a series of papers \cite{n-p-noncentral,d-p,StMethOnWienChaos,InvPrinForHomSums, optimal-gamma}. The target random variables of the form \eqref{eq:form} with $d=2$, and $\alpha_{\infty, 1} \times \alpha_{\infty,2} < 0$ belong to the so-called \textbf{Variance--Gamma} class of probability distributions. We refer to \cite{g-variance-gamma,gaunt-vg-kol,gaunt-VG-perfect-bounds,e-t,a-g} for development of Stein and Malliavin-Stein for the Variance--Gamma distributions. In this setting, the first obstacle for fully developing the Malliavin-Stein approach was the absence of a ``suitable'' Stein operator (meaning by that a differential operator with polynomial coefficients) for the candidate target distribution. This is the message of the next result. Also, the stability phenomenon of the weak convergence of the sequences in the second Wiener chaos is studied in \cite{n-poly} using tools in complex analysis.

\begin{Thm}[Stein characterization \cite{a-a-p-s-stein}]\label{thm:SMC}
Let $F_\infty$ belongs to the second Wiener chaos of the form \eqref{eq:form}. Consider polynomials $Q(x)=\big( P(x)\big)^{2}=\Big(x \prod_{i=1}^{d}(x - \alpha_{\infty, i} ) \Big)^{2}$ and, coefficients \begin{align*}
&  a_l= \frac{P^{(l)}(0)}{l! 2^{l-1}}, \quad 1 \le l \le d+1, \text{ and }\\
&  b_l= \sum_{r=l}^{d+1} \frac{a_r}{(r-l+1)!} \kappa_{r-l+2}(F_\infty).
\quad 2 \le l \le d+1. 
\end{align*}
Assume that $F$ is a general centered random variable living in a
	finite sum of Wiener chaoses (and hence smooth in the sense of
	Malliavin calculus).  Then $F  =  F_\infty$ (equality in
	distribution) if and only if
	$\E \left[ \mathcal{A}_\infty f (F) \right] =0$ for all  polynomials $f:\R \to \R$ where differential operator $\mathcal{A}_\infty$ of order $d$ is
	\begin{equation}\label{eq:SME} 
	\mathcal{A}_\infty f (x):= \sum_{l=2}^{d+1} (b_l - a_{l-1} x )
	f^{(d+2-l)}(x) - a_{d+1} x f(x). 
	\end{equation}
\end{Thm}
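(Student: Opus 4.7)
The plan is to exploit the explicit representation $F_\infty = \sum_{i=1}^d \alpha_{\infty,i}(N_i^2 - 1)$ from the statement, together with the iterated Gamma operators of Definition \ref{Def : Gamma} and the Malliavin integration by parts formula of Lemma \ref{L : Tech1}. The pivotal identity I would establish first---by induction on $k\ge 0$ starting from $\Gamma_1(F_\infty) = \tfrac12 \|DF_\infty\|_{\HH}^2$ (which holds because $F_\infty\in C_2$ yields $\LL F_\infty = -2F_\infty$ and hence $-\LL^{-1}F_\infty = F_\infty/2$)---is
\begin{equation*}
\Gamma_{k}(F_\infty) = 2^k \sum_{i=1}^d \alpha_{\infty,i}^{k+1}(N_i^2 - 1) + \E[\Gamma_k(F_\infty)], \quad k\ge 0.
\end{equation*}
Since $a_l \cdot 2^{l-1}$ is the coefficient of $x^l$ in $P(x) = x\prod_{i=1}^d(x-\alpha_{\infty,i})$ and $P(\alpha_{\infty,i}) = 0$, this formula produces at once a master almost-sure polynomial identity together with an infinite shifted family,
\begin{equation*}
\sum_{l=1}^{d+1} a_l\,\Gamma_{l-1}(F_\infty) = C_0,\qquad \sum_{l=1}^{d+1} a_l\,\Gamma_{l-1+j}(F_\infty) = 0\ \ (j\ge 1),
\end{equation*}
with $C_0 = \sum_{l=1}^{d+1} a_l\,\kappa_l(F_\infty)/(l-1)!$ after invoking the cumulant-Gamma relation $\kappa_r(F) = (r-1)!\,\E[\Gamma_{r-1}(F)]$ from Chapter 8 of \cite{n-p-book}.

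For the direction ``$F \stackrel{\rm d}{=} F_\infty$ implies $\E[\mathcal{A}_\infty f(F)] = 0$'', I would multiply the master identity by $f(F_\infty)$, take expectations, and iterate the consequence of Lemma \ref{L : Tech1} combined with the chain rule \eqref{e:chainrule}, namely
\begin{equation*}
\E[g(F)\Gamma_l(F)] = \E[g(F)]\,\E[\Gamma_l(F)] + \E[g'(F)\,\Gamma_{l+1}(F)].
\end{equation*}
Each round raises the derivative order of $f$ by one and the index of $\Gamma$ by one; the shifted identities (those with $j \ge 1$) let the accumulated constant coefficients telescope cleanly at each stage, and after at most $\deg f + 1$ rounds the residual $\Gamma$-tail vanishes because all sufficiently high derivatives of $f$ are zero. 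The combinatorial key is that the formula for $b_l$ in the statement is equivalent, via $\kappa_r = (r-1)!\,\E[\Gamma_{r-1}]$, to $b_l = \sum_{r=l}^{d+1} a_r\,\E[\Gamma_{r-l+1}(F_\infty)]$, which is precisely the shape needed for the surviving coefficients after telescoping to reassemble into $\E[\mathcal{A}_\infty f(F_\infty)]$. Verifying this matching round by round is the main technical obstacle: one has to check that, at each stage, the leftover constants produced by integration by parts coincide exactly with the weighted cumulants defining the $b_l$.

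For the converse, since $F$ lies in a finite sum of Wiener chaoses all its moments are finite, and the moment problem for its law is determinate (a standard consequence of the hypercontractive bound \eqref{Hyper2}). Testing $\E[\mathcal{A}_\infty f(F)] = 0$ against the monomials $f(x) = x^n$ for $n = 0, 1, 2, \ldots$ yields a linear recursion of order at most $d+1$ for the sequence $\{\E[F^n]\}_{n\ge 0}$, with leading coefficient $a_{d+1} = 1/2^d \ne 0$; the initial moments $\E[F], \ldots, \E[F^{d+1}]$ are pinned down by the low-degree tests $f = 1, x, \ldots, x^d$. By the forward direction the moments of $F_\infty$ obey the identical recursion with identical initial data, so $\E[F^n] = \E[F_\infty^n]$ for every $n \ge 0$, and determinacy forces $F \stackrel{\rm d}{=} F_\infty$.
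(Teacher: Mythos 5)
The survey does not actually prove Theorem \ref{thm:SMC}; it states it and refers to \cite{a-a-p-s-stein} (see also \cite{a-p-p}). Your strategy is essentially the one used in those sources, and the core of it is sound. The pivotal identity $\Gamma_k(F_\infty)-\E[\Gamma_k(F_\infty)]=2^k\sum_i\alpha_{\infty,i}^{k+1}(N_i^2-1)$ is correct (it follows by induction from $-\LL^{-1}$ acting as multiplication by $1/2$ on $C_2$ and killing constants), and together with $P(\alpha_{\infty,i})=0$ it does give both the master identity and the vanishing of all shifted sums, including their expectations (since $\sum_l a_l 2^{l-1+j}\sum_i\alpha_{\infty,i}^{l+j}=2^j\sum_i\alpha_{\infty,i}^jP(\alpha_{\infty,i})=0$). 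For the forward direction the telescoping you describe can be made completely explicit: writing the ``$x$-part'' of $\E[\mathcal A_\infty f(F_\infty)]$ as $\sum_{m=1}^{d+1}a_m\E[F_\infty f^{(d+1-m)}(F_\infty)]$, applying $\E[F_\infty h(F_\infty)]=\E[h'(F_\infty)\Gamma_1(F_\infty)]$ and then iterating $\E[g(F)\Gamma_j(F)]=\E[g(F)]\E[\Gamma_j(F)]+\E[g'(F)\Gamma_{j+1}(F)]$ exactly $m-1$ times leaves a remainder $\E[f^{(d+1)}(F_\infty)\sum_m a_m\Gamma_m(F_\infty)]$, which is killed by the $j=1$ shifted identity, while the surviving double sum reindexes precisely to $\sum_{l=2}^{d+1}b_l\,\E[f^{(d+2-l)}(F_\infty)]$ because $b_l=\sum_{r=l}^{d+1}a_r\E[\Gamma_{r-l+1}(F_\infty)]$. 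So the ``main technical obstacle'' you flag does close. (Since $\mathcal A_\infty f$ is a fixed polynomial, passing from $F_\infty$ to any $F$ with the same law is immediate.)

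The one genuine gap is in the converse, where you assert that the law of $F$ is moment-determinate ``as a standard consequence of the hypercontractive bound \eqref{Hyper2}''. That bound gives $\E[F^{2k}]\le C(M,k)\E[F^2]^k$ with $C(M,k)$ growing like $(2k-1)^{kM}$ for $F$ in the first $M$ chaoses, and Carleman's condition $\sum_k\E[F^{2k}]^{-1/(2k)}=\infty$ then fails to follow as soon as $M\ge 3$; indeed determinacy of general elements of higher chaoses is delicate. Fortunately you do not need it: your recursion (whose leading coefficient is $-a_{d+1}=-2^{-d}\ne 0$, as you note) shows that all moments of $F$ coincide with those of $F_\infty$, and it suffices that the \emph{target} $F_\infty$ be moment-determinate --- which it is, since $F_\infty$ is a finite linear combination of independent centered $\chi^2$ variables and hence has a moment generating function in a neighbourhood of the origin; a determinate law is by definition the unique law with its moment sequence. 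With that substitution the converse, and hence the whole argument, is correct.
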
 
The next essential conjecture formulates the non-Gaussian counterpart of the Stein's Lemma \ref{lem:stein}. An affirmative answer will complete the Stein part of the approach in this delicate setting. 
\begin{con}[Stein Universality Lemma]
Let $\mathcal{H}$ denote an appropriate class of test functions. For every given test function $h \in \mathcal{H}$ consider the associated Stein equation 
\begin{equation}\label{eq:se-2w}
	\mathcal{A}_\infty f (x) = h(x) - \E[h(F_\infty)].
\end{equation}
Then equation \eqref{eq:se-2w} admits a bounded solution $f_h$ which is $d$ times differentiable and that $ \Vert f^{(r)}_h \Vert_\infty < + \infty$ for all $r = 1, \cdots, d$ and the bounds are independent of the test function $h$.
\end{con}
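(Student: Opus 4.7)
The plan is to treat $\mathcal{A}_\infty f = h - \E[h(F_\infty)]$ as a linear ODE of order $d$ with affine coefficients, and to construct $f_h$ via an explicit Green's function built from the density $\rho_\infty$ of $F_\infty$. A crucial preliminary observation is that, because $\E[\mathcal{A}_\infty f(F_\infty)]=0$ for every polynomial $f$, integrating by parts $d$ times shows that $\rho_\infty$ satisfies the adjoint homogeneous equation $\mathcal{A}_\infty^* \rho_\infty = 0$. Since $\rho_\infty$ is an explicit convolution of shifted and scaled $\chi^2(1)$ densities, it is strictly positive on the interior of its support, has exponential decay on each infinite side, and all its derivatives admit closed-form estimates; it therefore provides one distinguished solution of the adjoint problem, and this is what will drive all subsequent estimates.

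The first technical step would be to construct $d$ linearly independent solutions $\phi_1, \ldots, \phi_d$ of the homogeneous equation $\mathcal{A}_\infty \phi = 0$, via a Frobenius-type analysis at each root of the leading coefficient $b_2 - a_1 x$ (and, more generally, at each root of the polynomial $P$). One expects a dichotomy between solutions that remain integrable against $\rho_\infty$ at $\pm\infty$ and solutions that blow up there; splicing the two classes through the Wronskian yields a Green's function $G(x,y)$, and the candidate solution is
\begin{equation*}
f_h(x) = \int_{\R} G(x,y)\,\big[h(y) - \E[h(F_\infty)]\big]\, dy.
\end{equation*}
The centering by $\E[h(F_\infty)]$ is precisely what makes the right-hand side orthogonal to the kernel $\R\cdot \rho_\infty$ of $\mathcal{A}_\infty^*$, and is exactly what selects a bounded solution rather than one of the exponentially growing siblings.

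Uniform bounds on $\|f_h^{(r)}\|_\infty$ for $r=0,1,\ldots,d$ would then be derived in two stages. First, by exploiting $\mathcal{A}_\infty^*\rho_\infty=0$ and the explicit tail behaviour of $\rho_\infty$, one obtains pointwise estimates of the form $|\partial_x^r G(x,y)\,\rho_\infty(y)|\leq K_r(y)$ with $K_r\in L^1(\R)$, yielding $\|f_h^{(r)}\|_\infty \leq C_r\,\|h\|$ for a suitable norm on $\mathcal{H}$ (typically $\|\cdot\|_\infty$ or a Lipschitz-type seminorm), with $C_r$ depending on $\alpha_{\infty,1},\ldots,\alpha_{\infty,d}$ but \emph{not} on $h$. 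Second, a Lusin-type regularisation argument --- convolve $h$ with a smooth mollifier and pass to the limit via dominated convergence --- extends these bounds from smooth test functions to the full class $\mathcal{H}$, along the same lines as the passage sketched in the footnote to Theorem \ref{t:gb} for the Gaussian case.

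The main obstacle, in our view, is that $\mathcal{A}_\infty$ is \emph{not} the generator of a Markov semigroup as soon as $d \geq 2$: its order $d$ exceeds $2$ and it therefore fails to be diffusive, so the classical semigroup recipe $f_h = -\int_0^{+\infty}(P_t h - \E[h(F_\infty)])\, dt$ used in the Gaussian and Gamma cases is simply not available. Consequently, the direct Frobenius/Green's function approach above must confront genuinely singular behaviour at the zero(s) of the leading coefficient $b_2 - a_1 x$, and the bookkeeping of the $d-1$ intermediate derivatives is substantially more delicate than in the first-order setting. A potentially more robust substitute would be to lift the problem to the ambient Gaussian space $\R^d$ supporting $N_1,\ldots,N_d$, build a candidate $\widetilde f_h$ using the Ornstein--Uhlenbeck semigroup there, and then project back onto $\sigma(F_\infty)$ via conditional expectation; verifying that this projection indeed solves $\mathcal{A}_\infty f_h = h - \E[h(F_\infty)]$ and that the derivative bounds are preserved under the projection is, most likely, where the bulk of the technical work would concentrate.
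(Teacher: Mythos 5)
The statement you are addressing is not proved anywhere in the paper: it is explicitly presented as an open \emph{conjecture} (the ``Stein Universality Lemma''), whose resolution the authors describe as the missing ingredient needed to complete the Stein part of the programme for second-chaos targets. Your text is accordingly a research programme rather than a proof, and as it stands it contains several genuine gaps. First, the identity $\mathcal{A}_\infty^*\rho_\infty=0$, which drives your whole construction, is obtained by integrating by parts $d$ times against the density $\rho_\infty$ of $F_\infty=\sum_{i=1}^d\alpha_{\infty,i}(N_i^2-1)$; but this density is a convolution of scaled $\chi^2(1)$ densities, each of which blows up like $|x|^{-1/2}$ at its endpoint, so $\rho_\infty$ has only limited smoothness (for small $d$ it is not even $C^d$), and the boundary terms in the integration by parts are not controlled. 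Second, your Frobenius analysis is localised at the wrong points: the singular points of the ODE are the zeros of the \emph{leading} coefficient $b_2-a_1x$ (a single point), not the roots of $P$; more importantly, for an operator of order $d\geq 3$ the construction of a Green's function on the whole line requires splitting the $d$-dimensional homogeneous solution space into subspaces of complementary dimensions that are well behaved at $+\infty$ and $-\infty$ respectively, and you give no argument that such a dichotomy holds, nor that the resulting kernel $G(x,y)$ has $d$ integrable $x$-derivatives uniformly in $x$.

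Third, and most seriously, the uniform bounds $\Vert f_h^{(r)}\Vert_\infty\leq C_r$ with $C_r$ independent of $h$ are precisely the content of the conjecture; in your write-up they are asserted to follow from ``pointwise estimates of the form $|\partial_x^r G(x,y)\rho_\infty(y)|\leq K_r(y)$'' without any indication of how such estimates would be obtained across the singular point of the leading coefficient, which is where first-order Stein equations already require careful case analysis and where the $d-1$ intermediate derivatives have no known a priori control. Your closing observation --- that $\mathcal{A}_\infty$ is not a Markov generator for $d\geq 2$, so the semigroup representation $f_h=-\int_0^\infty(P_th-\E[h(F_\infty)])\,dt$ is unavailable --- correctly identifies why the problem is hard, but the proposed remedy (lift to the ambient Gaussian space and project back by conditional expectation) is left entirely unverified: conditional expectation does not intertwine the Ornstein--Uhlenbeck generator on $\R^d$ with $\mathcal{A}_\infty$ in any evident way, and you yourself flag this step as ``where the bulk of the technical work would concentrate.'' In short, the proposal is a reasonable map of the difficulties, but none of the three pillars (adjoint identity, Green's function dichotomy, uniform derivative bounds) is established, so the conjecture remains open.
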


The rest of the section is devoted to the first-ever quantitative estimates with target distributions in the second Wiener chaos. The first estimate is stated in terms of 2-Wasserstein transport distance $\W_2$ (see Section \ref{sec:TD-SD-GC} for definition). We highlight that the upper bound involves only finitely many cumulants, and therefore consistent with one of the ultimate goal of Malliavin-Stein approach.  The second result is more general and rather intricate containing the iterated Gamma operators of the Malliavin calculus.  See also \cite{k} for several related results of a quantitative nature.

\begin{Thm}[\cite{a-a-p-s}]\label{Thm:2w-2w}
Let $F_n=\sum_{k\ge 1} \alpha_{n,k} \big( N^2_k -1 \big)$ be a sequence belongs to the second Wiener chaos associated to the isonormal process $W$ so that $\E[F^2_n]=1$ for all $n\ge 1$. Assume that the target random variable $F_\infty$ as in \eqref{eq:form}. Define 
$$\Delta(F_n) = \sum_{r=2}^{\text{deg}(Q)} \frac{Q^{(r)}(0)}{r!} \frac{\kappa_r(F_n)}{(r-1)!2^{r-1}}.$$
Then there exists a constant $C>0$ depending only on target random variable $F_\infty$ (and independent of $n$) such that 
\begin{equation}\label{eq:2w-2w}
{{\bf \rm W}_2}(F_n,F_\infty) \le \, C \, \bigg( \sqrt{\Delta(F_n)} + \sum_{r=2}^{d+1} \vert \kappa_r(F_n) - \kappa_r(F_\infty) \vert \bigg).
\end{equation}
\end{Thm}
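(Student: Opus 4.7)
The plan is to combine the Stein characterisation of Theorem~\ref{thm:SMC} with iterated Malliavin integration by parts on the second Wiener chaos, exploiting the explicit eigenvalue representation of Proposition~\ref{second-property}. The starting point is to estimate the $\W_2$ distance via a class of smooth test functions $h$ for which the Stein equation $\mathcal{A}_\infty f_h = h - \E[h(F_\infty)]$ admits a solution $f_h$ with $\|f_h^{(r)}\|_\infty\leq C_r$ for $r=0,\ldots,d$. At the level at which this regularity is available (either through the stated Conjecture, or for the restricted class of polynomial test functions), the problem reduces to bounding $\sup_h |\E[\mathcal{A}_\infty f_h(F_n)]|$, possibly after a moment-corrected interpolation argument to upgrade from $\W_1$-type estimates to $\W_2$.

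The next step is to process $\E[\mathcal{A}_\infty f(F_n)]$ by iterating Lemma~\ref{L : Tech1}. Each term of the shape $\E[F_n f^{(j)}(F_n)]$ is rewritten, via one Malliavin integration by parts and the chain rule~\eqref{e:chainrule}, as $\E[f^{(j+1)}(F_n)\Gamma_1(F_n)]$; iterating this reduction with the help of Definition~\ref{Def : Gamma} produces a representation of the form
\begin{equation*}
\E[\mathcal{A}_\infty f(F_n)] = \sum_{i=0}^{d}\E\bigl[f^{(i)}(F_n)\,T_i(F_n)\bigr],
\end{equation*}
where each $T_i(F_n)$ is a polynomial in the iterated Gammas $\Gamma_0(F_n),\ldots,\Gamma_{d-1}(F_n)$ whose coefficients are built out of the $a_l,b_l$ of Theorem~\ref{thm:SMC}. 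The algebraic design of $\mathcal{A}_\infty$ ensures that the scalar parts $\E[T_i(F_n)]$ recover specific linear combinations of the cumulants $\kappa_r(F_n)$ for $r\leq d+1$ (via $\kappa_r(F)=(r-1)!\E[\Gamma_{r-1}(F)]$), so that the ``mean'' contribution vanishes identically when $\kappa_r(F_n)=\kappa_r(F_\infty)$ and otherwise produces the telescoping sum $\sum_{r=2}^{d+1}|\kappa_r(F_n)-\kappa_r(F_\infty)|$ appearing on the right-hand side of~\eqref{eq:2w-2w}.

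The remaining ``fluctuation'' contribution has the form $\E[f^{(d)}(F_n)(\Pi_d(F_n) - \E[\Pi_d(F_n)])]$ for a suitable polynomial $\Pi_d$ in iterated Gammas; a single application of the Cauchy--Schwarz inequality yields an upper bound of order $\sqrt{\Var(\Pi_d(F_n))}$. The last ingredient, specific to $C_2$, is the algebraic identity
\begin{equation*}
\Var\bigl(\Pi_d(F_n)\bigr) = \const\cdot\Delta(F_n) = \const\cdot\sum_{k\geq 1}P(\alpha_{n,k})^2,
\end{equation*}
which follows by direct computation from the spectral representation $F_n = \sum_k\alpha_{n,k}(N_k^2-1)$ and the multiplication formula~\eqref{multiplication}: on the second chaos, every $\Gamma_i(F_n)$ is explicitly a polynomial in the independent variables $N_k^2-1$ with coefficients controlled by powers of $\alpha_{n,k}$, and the particular polynomial $\Pi_d$ is exactly the one whose $L^2$-norm is encoded by $P$ evaluated at each eigenvalue.

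The main obstacle I expect is twofold. First, the regularity of the Stein solution $f_h$ must be controlled uniformly in $h$: the operator $\mathcal{A}_\infty$ has polynomial coefficients of degree $d$ and order $d$, and its leading coefficient $a_{d+1}x$ vanishes at the origin, making~\eqref{eq:se-2w} a singular ODE; bypassing the open Conjecture requires either restricting attention to test functions for which direct estimates are available, or combining a hypercontractive mollification on $C_2$ with the density of polynomial targets. Second, the identification $\Var(\Pi_d(F_n))=\const\cdot\Delta(F_n)$ is the point at which the second-chaos structure is used in an essential way --- through the closed-form expression $I_2(f)I_2(g)\in C_4\oplus C_2\oplus C_0$ and the resulting explicit diagonalisation of every $\Gamma_i$ on eigenfunctions of $\LL$ with eigenvalue $2$ --- and extending the method beyond $C_2$ appears to demand genuinely new ideas.
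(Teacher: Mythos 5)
There is a genuine gap, and it sits exactly where you flag your ``main obstacle'': the route through the Stein operator $\mathcal{A}_\infty$ of Theorem~\ref{thm:SMC} cannot be completed with what is currently known. The survey explicitly states as an \emph{open conjecture} (the ``Stein Universality Lemma'') that the equation $\mathcal{A}_\infty f = h - \E[h(F_\infty)]$ admits solutions with derivative bounds uniform in $h$; your proposal needs precisely this, and neither of your suggested workarounds closes the gap. Restricting to polynomial test functions only reproduces the characterisation of equality in law, not a quantitative distance; and, more fundamentally, $\W_2$ --- unlike $\W_1$ --- has no Kantorovich--Rubinstein representation as $\sup_h|\E h(F_n)-\E h(F_\infty)|$ over a single class of test functions, so ``bounding $\sup_h|\E[\mathcal{A}_\infty f_h(F_n)]|$'' does not by itself control $\W_2$. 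Since $\W_2\geq \W_1$, an upper bound on $\W_1$ gives no upper bound on $\W_2$, and the ``moment-corrected interpolation'' you invoke to upgrade is not a routine step (generic interpolations of the form $\W_2^2\lesssim \W_1\cdot(\text{moments})$ would in any case degrade the rate and not yield the clean bound \eqref{eq:2w-2w}). The pieces of your argument that are correct --- the identity $\Delta(F_n)=\sum_k Q(\alpha_{n,k})=\sum_k P(\alpha_{n,k})^2$ (using $Q(0)=Q'(0)=0$ and $\kappa_r(F_n)=2^{r-1}(r-1)!\sum_k\alpha_{n,k}^r$), and the variance identity $\Var\bigl(\sum_{r=1}^{d+1}a_r\Gamma_{r-1}(F_n)\bigr)=\Delta(F_n)$ from \cite{a-p-p} --- are genuine, but they are the ingredients of the Kolmogorov bound of Theorem~\ref{Thm:Kol-2w} (proved via Stein--Tikhomirov/characteristic-function estimates, precisely to sidestep the missing Stein solution bounds), not of the $\W_2$ bound.

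The proof of Theorem~\ref{Thm:2w-2w} in \cite{a-a-p-s} takes a different, essentially spectral route that uses no Stein equation at all. Since $F_n$ and $F_\infty$ are realised over the \emph{same} i.i.d.\ sequence $\{N_k\}$, the trivial coupling gives
\begin{equation*}
\W_2(F_n,F_\infty)^2 \;\le\; 2\inf_{\sigma}\sum_{k\ge1}\bigl|\alpha_{n,\sigma(k)}-\alpha_{\infty,k}\bigr|^2 ,
\end{equation*}
the infimum being over matchings of the two eigenvalue sequences (with $\alpha_{\infty,k}=0$ for $k>d$). The work then consists in showing that this $\ell^2$ discrepancy is controlled by $\sqrt{\Delta(F_n)}+\sum_{r=2}^{d+1}|\kappa_r(F_n)-\kappa_r(F_\infty)|$: because the roots $0,\alpha_{\infty,1},\dots,\alpha_{\infty,d}$ of $P$ are simple and distinct, $\Delta(F_n)=\sum_kP(\alpha_{n,k})^2$ forces each $\alpha_{n,k}$ to lie within $O(|P(\alpha_{n,k})|)$ of some root, and the low-order cumulants pin down the multiplicities of the clusters (exactly one eigenvalue near each $\alpha_{\infty,i}$, the rest near $0$), the mismatch being paid for by the cumulant differences. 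You correctly sense that the second-chaos structure enters through the eigenvalue representation, but in the actual argument it enters through this coupling and root-perturbation analysis rather than through a diagonalisation of the iterated $\Gamma$ operators.
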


\begin{Ex}{\rm
	Let $d=2$ and $\alpha_{\infty,1}=- \alpha_{\infty,2}=1/2$, then the target
	random variable $F_\infty$ $( = N_1 \times N_2$, where	$N_1,N_2 \sim \mathscr{N}(0,1)$ are independent and equality holds
	in law) belongs to the class of Variance--Gamma
	distributions $VG_c(r,\theta,\sigma)$ with parameters $r=\sigma=1$
	and $\theta=0$. Then, \cite[\rm Corollary 5.10, part (a)]{e-t} reads
	\begin{equation}\label{eq:thale-bound}
	d_W (F_n,F_\infty) \le C\, \sqrt{\Delta(F_n) + 1/4 \, \kappa^2_3(F_n)}
	\end{equation}
which is consistence with estimate \eqref{eq:2w-2w}. One has to note that for the target random variable $F_\infty$ it holds that $\kappa_3(F_\infty)=0$. For a generalization of the estimate \eqref{eq:thale-bound} to the higher moments and $2$- Wasserstein distance see \cite{a-g}. 
}
\end{Ex}

The next result provides a quantitative bound in the \textbf{Kolmogorov distance}. The proof relies on the classical Berry--Essen estimate in terms of bounding the difference of the characteristic functions. We recall that for two real-valued random variables $X$ and $Y$ the Kolmogorov distance is defined as 
$$d_{Kol}(X,Y):= \sup_{x \in \R} \Big \vert  \P(X\in (-\infty, x])    - \P (  Y \in (-\infty, x])\Big \vert.$$

\begin{Thm}[\cite{a-m-p-s}]\label{Thm:Kol-2w}
	Let $F_\infty$ be the target random variable in the second Wiener chaos of the form \eqref{eq:form}. Assume that $\{F_n\}_{ n\ge 1}$ be a sequence of centered random elements living in a finite sum of the Wiener chaoses. Then there exists a constant $C$ (may depend on the sequence $F_n$ but independent of $n$) such that
	
	\begin{equation}\label{eq:Kol-2w}
	\begin{split}
	d_{Kol} (F_n,F_\infty) &\le C \sqrt{  \E \left[ \Big \vert \sum_{r=1}^{d+1} a_r \left( \Gamma_{r-1}(F_n) -  \E[ \Gamma_{r-1}(F_n)]  \right)   \Big \vert \right] + \sum_{r=2}^{d+1} \vert \kappa_r(F_n) - \kappa_r(F_\infty) \vert  }\\
	& \le C \sqrt{ \sqrt{\Var \left(    \sum_{r=1}^{d+1} a_r  \Gamma_{r-1}(F_n)  \right) } +\sum_{r=2}^{d+1} \vert \kappa_r(F_n) - \kappa_r(F_\infty) \vert  }
	\end{split}
	\end{equation} 
\end{Thm}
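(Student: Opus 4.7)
The plan is to bound the Kolmogorov distance via the classical Berry--Esseen smoothing inequality. Because the target $F_\infty$ (being a polynomial in finitely many independent standard Gaussians) admits a smooth bounded density, one has, for every $T>0$,
$$
d_{Kol}(F_n,F_\infty) \;\le\; \frac{1}{\pi}\int_{-T}^{T}\left\vert\frac{\phi_n(t)-\phi_\infty(t)}{t}\right\vert dt + \frac{C_0}{T},
$$
where $\phi_n$ and $\phi_\infty$ denote the characteristic functions of $F_n$ and $F_\infty$ and $C_0$ depends only on the target. The task thus reduces to a pointwise estimate of $\phi_n(t)-\phi_\infty(t)$ on $[-T,T]$, which will then be integrated and optimized in $T$: the square root appearing in \eqref{eq:Kol-2w} will emerge from this standard trade-off.

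The core of the argument is to convert the Stein characterization of Theorem \ref{thm:SMC} into an ODE comparison for characteristic functions. Inserting $f(x)=e^{itx}$ into $\mathcal{A}_\infty$ and using $\E[\mathcal{A}_\infty(e^{it\cdot})(F_\infty)] = 0$ together with $\E[F_\infty e^{itF_\infty}]=-i\phi_\infty'(t)$, one obtains a first-order linear ODE of the form $P(t)\phi_\infty(t) + Q(t)\phi_\infty'(t) = 0$, where $P(t)=\sum_{l=2}^{d+1}b_l(it)^{d+2-l}$ and $Q(t)=i\sum_{r=1}^{d+1}a_r(it)^{d+1-r}$ are explicit polynomials (note how $P$ already encodes the cumulants of $F_\infty$ through the definition of the $b_l$'s). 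For the sequence $F_n$, the same manipulation gives $P(t)\phi_n(t)+Q(t)\phi_n'(t) = \E[\mathcal{A}_\infty(e^{it\cdot})(F_n)]$, whose right-hand side we now wish to rewrite as a \emph{small} residual. This is where Malliavin calculus enters: applying the integration-by-parts formula of Lemma \ref{L : Tech1} together with the chain rule \eqref{e:chainrule} recursively to the quantities $\E[F_n\, e^{itF_n}]$, and using the fundamental identity $(r-1)!\,\E[\Gamma_{r-1}(F_n)] = \kappa_r(F_n)$, one recasts the residual in the form
$$
\E[\mathcal{A}_\infty(e^{it\cdot})(F_n)] \;=\; \Pi(t)\,\phi_n(t) \;+\; \mathcal{E}_n(t),
$$
where $\Pi(t)$ is a polynomial in $t$ whose coefficients are exactly the cumulant gaps $\kappa_r(F_n) - \kappa_r(F_\infty)$ for $2\le r\le d+1$, and the oscillatory remainder reads $\mathcal{E}_n(t) = \E[e^{itF_n}\,\sum_{r=1}^{d+1} c_r(t)\, a_r\bigl(\Gamma_{r-1}(F_n)-\E[\Gamma_{r-1}(F_n)]\bigr)]$ with $c_r(t)$ polynomial of bounded degree, the structure of the coefficients being precisely dictated by the constants $a_r$ of Theorem \ref{thm:SMC}.

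Subtracting the two ODEs, $\psi := \phi_n - \phi_\infty$ solves $P\psi + Q\psi' = \Pi\,\phi_n + \mathcal{E}_n$ with $\psi(0)=0$; variation of constants, combined with an explicit estimate of the integrating factor of $P/Q$ on the compact interval $[-T,T]$, then yields
$$
\left\vert\frac{\phi_n(t)-\phi_\infty(t)}{t}\right\vert \;\le\; C(T)\!\left\{\sum_{r=2}^{d+1}\vert\kappa_r(F_n)-\kappa_r(F_\infty)\vert + \E\Bigl\vert\sum_{r=1}^{d+1}a_r\bigl(\Gamma_{r-1}(F_n)-\E[\Gamma_{r-1}(F_n)]\bigr)\Bigr\vert\right\}
$$
uniformly for $t\in[-T,T]\setminus\{0\}$. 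Inserting this bound into the Berry--Esseen smoothing inequality and optimizing over $T$ gives the first inequality in \eqref{eq:Kol-2w}; the second one follows immediately from Cauchy--Schwarz in the form $\E|X-\E X| \le \sqrt{\Var(X)}$, applied to $X := \sum_{r=1}^{d+1}a_r\Gamma_{r-1}(F_n)$. The principal technical obstacle will be controlling the variation-of-constants solution near the zeros of $Q$, in particular at $t=0$ where the integrating kernel becomes singular, and handling the growth of $P/Q$ across $[-T,T]$; here, hypercontractivity of a finite sum of Wiener chaoses plays a decisive role, ensuring $L^p$-control of all the iterated Gammas $\Gamma_k(F_n)$ and hence guaranteeing that the constant $C$ in \eqref{eq:Kol-2w} is genuinely independent of $n$.
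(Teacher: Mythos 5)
Your proposal follows the same route that the paper indicates for this result (which it quotes from \cite{a-m-p-s} without reproducing the proof): the classical Esseen smoothing inequality combined with a Stein--Tikhomirov comparison of characteristic functions, in which the Stein operator of Theorem \ref{thm:SMC} applied to $e^{itx}$ produces a first-order ODE whose residual is re-expressed, via iterated Malliavin integration by parts, in terms of the centred sum $\sum_{r=1}^{d+1}a_r\Gamma_{r-1}(F_n)$ and the cumulant gaps, with the final Cauchy--Schwarz step giving the second inequality. The architecture is correct and the technical points you flag (growth of the integrating factor of $P/Q$ on $[-T,T]$ and the $T$-optimization producing the exponent $1/2$) are indeed where the real work lies; note only that $Q(0)=i\,a_{d+1}=i\,2^{-d}\neq 0$, so the integrating kernel is in fact regular at $t=0$.
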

\begin{Rem}{\rm
	We remark that when the sequence $\{F_n \}_{n\ge 1}$ appearing in Theorem \ref{Thm:Kol-2w} belongs to the second Wiener chaos, then \cite{a-p-p} yields that
	$$ \Var \left(    \sum_{r=1}^{d+1} a_r  \Gamma_{r-1}(F_n) \right) = \Delta(F_n)$$ where the quantity $\Delta(F_n)$ is as Theorem \ref{Thm:2w-2w}. As a result, the estimate \eqref{eq:Kol-2w} takes the form (compare with \eqref{eq:2w-2w}) $$d_{Kol} (F_n,F_\infty) \le C  \sqrt{ \sqrt{ \Delta(F_n)}  +\sum_{r=2}^{d+1} \vert \kappa_r(F_n) - \kappa_r(F_\infty) \vert  }.$$
}
\end{Rem}

We end the section with the following conjecture is aiming to control the iterated Gamma operators of Malliavin calculus appearing in the RHS of the estimate \eqref{eq:Kol-2w} with finitely many cumulants. A successful path might go through first proving the estimate \eqref{eq:Gamma2-conjecture} where we name it as the $\Gamma_2-$ Conjecture. Finally we point out that the estimate \eqref{eq:Gamma2-conjecture} has to be compared with the famous estimate $\Var(\Gamma_1(F)) \le C \kappa_4(F)$ in the normal approximation setting where $F$ is a chaotic random variable.
\begin{con}\label{con:gamma-to-cumulant}
Let $F_\infty$ be the target random variable in the second Wiener chaos of the form \eqref{eq:form}. Assume that $F=I_q(f)$ be a chaotic random variable in the $q$th Wiener chaos with $q \ge 2$. Then there exists a general constant $C$ (may depend on $q$ and $d$) such that 
\begin{equation}\label{eq:gamma-to-cumulant}
	 \Var \left(    \sum_{r=1}^{d+1} a_r  \Gamma_{r-1}(F) \right) \le C \Delta(F).
\end{equation}
In the particular case, when $d=2$, and $\alpha_{\infty,1}=- \alpha_{\infty,2}=1/2$, then the target
random variable $F_\infty$ $( = N_1 \times N_2$, where	$N_1,N_2 \sim \mathscr{N}(0,1)$ are independent, the estimate \eqref{eq:gamma-to-cumulant} boils down to that 
\begin{equation}\label{eq:Gamma2-conjecture}
\Var \left(   \Gamma_{2}(F) - F \right) \le C \left\{  \frac{\kappa_6(F)}{5!} - 2 \frac{\kappa_4(F)}{3!} + \kappa_2(F) \right\}.
\end{equation}
\end{con}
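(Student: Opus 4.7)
The plan is to carry everything out in terms of the chaotic decomposition of $F = I_q(f)$. By iterated application of the multiplication formula \eqref{multiplication} combined with the chain rule \eqref{e:chainrule}, each $\Gamma_{r-1}(F)$ admits an explicit chaotic expansion whose kernels are symmetrisations of iterated contractions of the form $f\,\widetilde{\otimes}_{s_1}\,f\,\widetilde{\otimes}_{s_2}\cdots\,\widetilde{\otimes}_{s_{r-1}}\,f$. Setting $G := \sum_{r=1}^{d+1} a_r \,\Gamma_{r-1}(F)$ and writing $g_n$ for its $n$th chaotic kernel, the isometry of multiple integrals gives $\Var(G) = \sum_{n \ge 1} n!\, \|g_n\|_{\HH^{\otimes n}}^2$, so the left-hand side of \eqref{eq:gamma-to-cumulant} becomes a finite, explicit, positive-definite quadratic form in the squared norms of iterated contractions of the single kernel $f$.

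The second step is to re-express $\Delta(F)$ in the same language. Via the identity $\kappa_r(F) = (r-1)!\,\E[\Gamma_{r-1}(F)]$ and the chaotic decomposition of $\Gamma_{r-1}(F)$ computed above, each cumulant $\kappa_r(F)$ becomes an explicit polynomial in the \emph{full} contractions of copies of $f$. Substituting these formulae into $\Delta(F)$ rewrites the right-hand side of \eqref{eq:gamma-to-cumulant} as a (generally sign-indefinite) linear combination of the same family of iterated-contraction norms that control $\Var(G)$. Once both sides are written in this common basis, the conjecture reduces to a finite-dimensional algebraic domination statement.

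The main obstacle is precisely this algebraic domination: $\Delta(F)$ mixes cumulants of orders $2$ through $d+1$ with the indefinite signs inherited from the coefficients of $Q(x) = P(x)^2$, and the diagram-formula expansion of $\kappa_r(F)$ produces delicate cancellations between contractions of different orders. A natural route, modelled on the spectral argument of Proposition \ref{prop:variance-estimate}, is to factor $G - \E[G]$ as the image of an explicit polynomial of $F$ under a suitable product of spectral-shift operators $(\LL + j\Id)$, with $0 \le j \le D$ and $D$ depending only on $q$ and $d$; this would absorb the indefinite signs into the spectral factors and reduce matters to an upper bound for the $L^2$-norm of a polynomial of $F$, which by moment-to-cumulant conversion combined with the hypercontractive estimate \eqref{Hyper2} is controlled by a constant multiple of $\Delta(F)$. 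Producing this spectral factorisation is the genuinely new ingredient, since, in contrast to the elementary identity $\Gamma[X] - q = \tfrac{1}{2}(\LL + 2q\Id)(X^2 - 1)$ exploited in the proof of Proposition \ref{prop:variance-estimate}, the relevant polynomial here has degree $d+1$ and must be tailored to the higher-order Stein operator $\mathcal{A}_\infty$ of Theorem \ref{thm:SMC}.

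For the particular case \eqref{eq:Gamma2-conjecture} (where $d=2$, $P(x) = x^3 - x/4$, and $F_\infty = N_1 N_2$) this program is concrete and should be attempted first. One computes $\Gamma_1(F) = (1/q)\|DF\|_\HH^2$ and $\Gamma_2(F) = \langle DF, -D\LL^{-1}\Gamma_1(F)\rangle_\HH$ explicitly via the product formula, obtains the chaotic decomposition of $\Gamma_2(F) - F$ in closed form, and then aims at an identity of the schematic shape $\Var(\Gamma_2(F) - F) \le C \, \E[P(F)^2]$ plus hypercontractive remainders, with the conversion $\E[P(F)^2] \leftrightarrow \kappa_6(F)/5! - 2\kappa_4(F)/3! + \kappa_2(F)$ providing the arithmetic bridge to the stated bound. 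The second-chaos benchmark $q = 2$, where \cite{a-p-p} already delivers the exact identity $\Var(\sum_r a_r \,\Gamma_{r-1}(F)) = \Delta(F)$, serves as both a sanity check and a template for general $q$.
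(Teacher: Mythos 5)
First, note that the statement you are addressing is stated in the paper as a \emph{conjecture} (Conjecture \ref{con:gamma-to-cumulant}): the paper offers no proof, and explicitly presents \eqref{eq:gamma-to-cumulant} and \eqref{eq:Gamma2-conjecture} as open problems. Your text is therefore not being measured against an existing argument, and it should be judged on whether it actually closes the problem. It does not. What you have written is a plausible research programme whose preparatory steps are sound --- the chaotic expansion of $\Gamma_{r-1}(F)$ via iterated contractions and \eqref{multiplication}, the isometry formula for $\Var\bigl(\sum_r a_r\Gamma_{r-1}(F)\bigr)$, the cumulant conversion $\kappa_r(F)=(r-1)!\,\E[\Gamma_{r-1}(F)]$, and the arithmetic check that for $d=2$, $P(x)=x^3-x/4$ one has $\sum_{r=1}^{3}a_r\Gamma_{r-1}(F)=\tfrac14(\Gamma_2(F)-F)$ and $32\,\Delta(F)=\kappa_6(F)/5!-2\kappa_4(F)/3!+\kappa_2(F)$, all of which are correct --- but the entire content of the conjecture is the final domination step, and you yourself concede that the required spectral factorisation is ``the genuinely new ingredient.'' Announcing where the new idea must go is not the same as supplying it.

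Two concrete points make the gap more than a formality. First, the analogue of the identity $\Gamma[X]-q=\tfrac12(\LL+2q\Id)(X^2-1)$ used in Proposition \ref{prop:variance-estimate} is not available here: $\Gamma_2(F)$ is \emph{not} a polynomial in $F$ (it is an iterated bracket $\langle DF,-D\LL^{-1}\Gamma_1(F)\rangle_{\HH}$ whose chaotic kernels involve contractions such as $(f\otimes_{s_1}f)\otimes_{s_2}f$ that cannot be reassembled into powers of $f$ alone for $q\geq 3$), so there is no reason a product of shifts $(\LL+j\Id)$ applied to $P(F)$ should reproduce $G-\E[G]$; this is precisely where the analogy with the fourth moment argument breaks down, and your plan gives no candidate for the factorisation nor a reason one should exist. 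Second, the right-hand side of \eqref{eq:gamma-to-cumulant} is a sign-indefinite combination of cumulants, and even its nonnegativity for a general element of the $q$th chaos with $q\geq 3$ is part of what must be proved (for $q=2$ it is automatic, being equal to the variance on the left by \cite{a-p-p}); an appeal to hypercontractivity \eqref{Hyper2} bounds moments \emph{from above} by powers of $\E[F^2]$ and cannot by itself produce a lower bound of the form $\Delta(F)\geq c\,\Var(G)$. Until these two issues are resolved, the proposal remains a statement of strategy rather than a proof, which is consistent with the paper's decision to record the claim as a conjecture.
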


\bibliographystyle{alpha}

\end{document}